\documentclass[letterpaper,12pt]{amsart}
\usepackage[margin=1in]{geometry}
\usepackage{graphicx, epigraph}
\usepackage{amsmath,amsthm,amssymb,mathtools,amsfonts,mathrsfs,stmaryrd, latexsym}
\usepackage{xspace,xcolor}
\usepackage[breaklinks,colorlinks,citecolor=teal,linkcolor=teal,urlcolor=teal,pagebackref,hyperindex]{hyperref}
\usepackage[alphabetic]{amsrefs}
\usepackage{tikz-cd}
\usepackage{tikz}
\usetikzlibrary{arrows.meta,positioning,calc,decorations.pathmorphing,fit,shapes.geometric}
\usepackage[all]{xy}
\usepackage{url}
\usepackage[shortlabels]{enumitem}
\usepackage{booktabs}
\usepackage{array}
\usepackage{longtable}
\usepackage{cleveref}

\usepackage[nodisplayskipstretch]{setspace}
\newtheorem{thm}{Theorem}[section]
\newtheorem{prop}[thm]{Proposition}

\newtheorem{lem}[thm]{Lemma}
\newtheorem{cor}[thm]{Corollary}

\theoremstyle{definition}

\newtheorem{rmk}[thm]{Remark}

\newtheorem*{nota*}{Notation}

\newcommand{\Scal}{\mathcal S}
\newcommand{\Ecal}{\mathcal E}
\newcommand{\id}{\mathbf 1}
\newcommand{\C}{\mathbb C}
\newcommand{\Q}{\mathbb Q}
\newcommand{\Z}{\mathbb Z}

\newcommand{\PP}{\mathbb P}
\newcommand{\cC}{\mathcal C}

\newcommand{\bM}{\overline{\mathcal M}}
\newcommand{\vir}{\mathrm{vir}}

\newcommand{\CR}{\mathrm{CR}}

\newcommand{\age}{\mathrm{age}}
\newcommand{\ev}{\mathrm{ev}}
\newcommand{\Aut}{\mathrm{Aut}}

\newcommand{\Part}{\mathrm{Part}}

\newcommand{\scrC}{\mathscr C}

\newcommand{\vars}{\varsigma}

\newcommand{\OrbCap}{\mathrm{Cap}}
\newcommand{\vac}{|0\rangle}
\newcommand{\rank}{\operatorname{rank}}

\newcommand{\thickslash}{\mathbin{\!\!\pmb{\fatslash}}}

\DeclareMathOperator{\Tree}{Tree}

\DeclareMathOperator{\Spec}{Spec}

\numberwithin{equation}{section}
\newif\ifshowdraftnotes
\showdraftnotestrue

\title[Virasoro Constraints for Orbifold Curves]
{Virasoro Constraints for Orbifold Curves}

\author[Hu]{Jianxun Hu}
\address{School of Mathematics, Sun Yat-sen University, Guangzhou 510275, China}
\email{stsjxhu@mail.sysu.edu.cn}

\author[Jin]{Pengrong Jin}
\address{Department of Mathematics, Southern University of Science and Technology, Shenzhen, Guangdong 518055, China}
\email{jinpr@mail.sustech.edu.cn}

\author[Ke]{Hua-Zhong Ke}
\address{School of Mathematics, Sun Yat-sen University, Guangzhou 510275, China}
\email{kehuazh@mail.sysu.edu.cn}

\author[Lu]{Jingwei Lu}
\address{School of Mathematics, Sun Yat-sen University, Guangzhou 510275, China}
\email{lujw28@mail2.edu.cn}

\author[Tseng]{Hsian-Hua Tseng}
\address{Department of Mathematics\\ Ohio State University\\ 231 West 18th Ave.\\ Columbus,  OH 43210\\ USA}
\email{hhtseng@math.ohio-state.edu}

\author[Wu]{Longting Wu}
\address{Department of Mathematics \& SUSTech International Center for Mathematics, Southern University of Science and Technology,
Shenzhen, Guangdong 518055, China}
\email{wult@sustech.edu.cn}

\date{}

\begin{document}

\begin{abstract}
We prove the Virasoro constraints for the relative Gromov--Witten theory of all smooth projective effective orbifold curves, allowing relative conditions at ordinary points. The absolute Jiang--Tseng Virasoro conjecture for smooth projective effective orbifold curves follows as a corollary.
\end{abstract}

\maketitle

\begingroup
\small
\tableofcontents
\endgroup

{\large \part{Setting, main theorem, and reduction}}

\section{Introduction}
\label{sec:introduction}

\subsection{Background}
Virasoro constraints are a system of recursive differential equations for descendant Gromov--Witten invariants.  They were predicted in the physics literature and were formulated mathematically for smooth projective varieties by Eguchi--Hori--Xiong and Katz. The constraints are expressed by operators \(L_k\), \(k\ge -1\), acting on the total descendant potential and satisfying the Virasoro commutation relations
\[
   [L_m,L_n]=(m-n)L_{m+n}.
\]
They encode, in a uniform way, the string equation, the dilaton/divisor-type equation, and infinitely many higher descendent recursions.

For semisimple Gromov--Witten theories, the work of Givental \cite{Giv5} \cite{Giv6} and Teleman \cite{Tel} gives a powerful tool to all-genus reconstruction and implies many Virasoro-type consequences.  However, general all-genus proofs in non-semisimple situations are comparatively rare, see \cite{OP3}, \cite{CGT}, \cite{Tseng26} for some established cases. 

The Virasoro conjecture has a natural extension to Gromov--Witten theory of a smooth proper Deligne--Mumford stack, spelled out explicitly by Jiang--Tseng \cite{JT}.  Their operators are defined on Chen--Ruan cohomology and include the age grading and the Chen--Ruan pairing.  In dimension one, these operators acquire a particularly transparent form: ordinary curve contributions are supplemented by age-shifted linear terms and by a new twisted-sector quadratic term.  

The purpose of the present paper is to prove Virasoro constraints for orbifold curves in the relative setting.

\subsection{Orbifold curves}
We work with smooth projective effective orbifold curves over $\C$. Such a curve can be written as a multi-root stack
\begin{equation*}
  \cC=C\bigl[\sqrt[r_1]{p_1},\ldots,\sqrt[r_s]{p_s}\bigr]
\end{equation*}
over its coarse curve $C$. Equivalently, \'etale locally near \(p_i\), if \(x_i\) is a local coordinate on \(C\) with \(p_i=\{x_i=0\}\), then \(\cC\) is locally of the form
\[
  [\Spec \mathbb{C}[z_i]/\mu_{r_i}],  \qquad x_i=z_i^{r_i},
\]
where \(\mu_{r_i}\) acts by \(z_i\mapsto \zeta_i z_i\) and \(\zeta_i\) is an \(r_i\)-th root of unity. Thus \(\cC\to C\) is an isomorphism away from \(p_1,\ldots,p_s\), while the point over \(p_i\) has stabilizer \(\mu_{r_i}\).

The inertia stack \(I\cC\) parametrizes pairs \((x,g)\), where \(x\in \cC\) and \(g\in\Aut_{\cC}(x)\).  It has the untwisted component \(\cC\) and, over each orbifold point \(p_i\), twisted components indexed by the nontrivial elements of \(\mu_{r_i}\).  We denote by
\[
  \phi_{i,a},\qquad 1\le a\le r_i-1,
\]
the Chen--Ruan class of the component corresponding to \(\zeta^a_i\).  Its age is
\[
  \age(\phi_{i,a})=\frac{a}{r_i}.
\]
With the standard unnormalized basis, the Chen--Ruan pairing satisfies
\[
  (\phi_{i,a},\phi_{j,b})_{\CR}=0
  \quad\text{unless }i=j\text{ and }a+b=r_i,
\]
and
\[
  (\phi_{i,a},\phi_{i,r_i-a})_{\CR}=\frac{1}{r_i}.
\]

The untwisted cohomology is the ordinary cohomology of the coarse curve.  We use the basis
\[
  \id,\quad \omega,\quad
  \alpha_1,\ldots,\alpha_g,\quad
  \beta_1,\ldots,\beta_g,
\]
where \(\omega\) is the point class of $C$, and \(\{\alpha_j,\beta_j\}\) is a symplectic basis of \(H^1(C)\).  Thus
\[
 H^*_{\CR}(\cC)=H^*(C)\oplus
 \bigoplus_{i=1}^s\bigoplus_{a=1}^{r_i-1}\C\phi_{i,a}.
\]

\subsection{Relative orbifold Gromov--Witten invariants}\label{section-relativeorbifoldGW}

Let
\[
  Q=\{q_1,\ldots,q_m\}\subset \cC
\]
be a set of ordinary, non-orbifold points, disjoint from \(p_1,\ldots,p_s\).  We write \(\cC/Q\) for the relative orbifold target.  A relative condition at \(q_j\) is an ordinary partition
\[
  \eta^j=(\eta^j_1,\ldots,\eta^j_{\ell_j}).
\]
For descendant insertions \(\tau_{k_u}(\gamma_u)\), \(\gamma_u\in H^*_{\CR}(\cC)\), the connected relative orbifold invariants are 
\begin{equation}\label{eqn:relorb-intro}
\left\langle
  \prod_{a=1}^{n}\tau_{k_
a}(\gamma_
a)
  \ \middle|\ \eta^1,\ldots,\eta^m
\right\rangle^{\circ,\,\cC/Q}_{g,d}
=\int_{[\bM_{g,n}(\cC/Q,\eta^1,\ldots,\eta^m)]^{\vir}}
\prod_{a=1}^{n}\bar\psi_
a^{k_
a}\ev_
a^*(\gamma_
a).
\end{equation}
Here, the moduli space
$$\bM_{g,n}(\cC/Q,\eta^1,\ldots,\eta^m)$$
parameterizes connected, genus $g$, $n$-pointed relative orbifold stable maps of the pair $(\cC,Q)$. Relative markings over $q_j$ are required to have contact orders prescribed by the partition $\eta^j$. There is a natural morphism
\[\pi:\bM_{g,n}(\cC/Q,\eta^1,\ldots,\eta^m)\longrightarrow \bM_{g,n}(C/Q,\eta^1,\ldots,\eta^m)\]
by forgetting the orbifold structures. The coarse cotangent classes $\bar\psi_a$ are pulling back from $\bM_{g,n}(C/Q,\eta^1,\ldots,\eta^m)$.

The superscript $\circ$ in \eqref{eqn:relorb-intro} stands for connected invariants, and for disconnected invariants, we use the superscript $\bullet$.

Let
\[
t^0_\ell,\quad t^1_\ell,\quad
s^j_\ell,\quad \bar s^j_\ell,\quad
x^{i,a}_\ell
\qquad(\ell\ge0)
\]
be the variables associated respectively to
\[
\tau_\ell(\id),\quad \tau_\ell(\omega),\quad
\tau_\ell(\alpha_j),\quad \tau_\ell(\beta_j),\quad
\tau_\ell(\phi_{i,a}).
\]
The variables $s^j_\ell,\bar s^j_\ell$ are odd and supercommute.
All derivatives with respect to these variables are left superderivatives.
We further set
\[\begin{aligned}
\xi=\sum_{\ell\ge0}\bigl(t^0_\ell\tau_\ell(\id)+t^1_\ell\tau_\ell(\omega)\bigr)
 +\sum_{j=1}^g\sum_{\ell\ge0}
 \bigl(s^j_\ell\tau_\ell(\alpha_j)+\bar s^j_\ell\tau_\ell(\beta_j)\bigr)
+\sum_{i=1}^{s}\sum_{a=1}^{r_i-1}\sum_{\ell\ge0}
 x^{i,a}_\ell\tau_\ell(\phi_{i,a}).
\end{aligned}\]
The disconnected relative potential of $\cC/Q$ with fixed relative data is defined by
\[
  Z^{\cC/Q}_d[\eta^1,\ldots,\eta^m]\coloneqq \sum_{n\geq 0}\frac{1}{n!}\left\langle
 \xi^n
  \ \middle|\ \eta^1,\ldots,\eta^m
\right\rangle^{\bullet,\,\cC/Q}_{d}.
\]
The genus is suppressed because it is determined by the dimension constraint in the disconnected generating function. When $m\neq 0$, the degree can also be suppressed since it is determined by the sum of each partition.
When $m=0$, we use the same notation to denote the disconnected potential of the absolute orbifold theory of $\cC$.

\subsection{Virasoro operators}\label{subsec:virasoro-operators}
The orbifold logarithmic Euler characteristic of $(\cC,Q)$ is
\begin{equation*}
  \chi_{\log}(\cC,Q)
  =\int_{\cC}c_1(T\cC(-\log Q))=2-2g(C)-|Q|-
  \sum_{i=1}^{s}\left(1-\frac1{r_i}\right).
\end{equation*}
We put
\[
\theta_{i,a}=\frac{a}{r_i},
\qquad a^\vee=r_i-a.
\]
The relative orbifold Virasoro operators of the pair $(\cC,Q)$ can be obtained from the Jiang--Tseng construction \cite{JT} by replacing $c_1(T\cC)$ with $c_1(T\cC(-\log Q))$. So $\chi(\cC)$ will be replaced by $\chi_{\log}(\cC,Q)$. We write out these operators explicitly: 
\begin{align*}
L_{-1}={}&-\frac{\partial}{\partial t^0_0}
+\sum_{\ell\ge0}\Bigg(
 t^0_{\ell+1}\frac{\partial}{\partial t^0_\ell}
+t^1_{\ell+1}\frac{\partial}{\partial t^1_\ell}
+\sum_{j=1}^g\left(
 s^j_{\ell+1}\frac{\partial}{\partial s^j_\ell}
+\bar s^j_{\ell+1}\frac{\partial}{\partial \bar s^j_\ell}
\right)\nonumber\\
&\hspace{37mm}
+\sum_{i=1}^s\sum_{a=1}^{r_i-1}
 x^{i,a}_{\ell+1}\frac{\partial}{\partial x^{i,a}_\ell}
\Bigg)\nonumber\\
&+ t^0_0t^1_0
+\sum_{j=1}^g s^j_0\bar s^j_0
+\frac1{2}\sum_{i=1}^s\sum_{a=1}^{r_i-1}
\frac1{r_i}x^{i,a}_0x^{i,a^\vee}_0,
\end{align*}
and 
\begin{align*}
L_0={}&-\frac{\partial}{\partial t^0_1}
-\chi_{\log}(\cC,Q)\frac{\partial}{\partial t^1_0}
+\sum_{\ell\ge0}\Bigg(
\ell t^0_\ell\frac{\partial}{\partial t^0_\ell}
+(\ell+1)t^1_\ell\frac{\partial}{\partial t^1_\ell}\nonumber\\
&\qquad+\sum_{j=1}^g\left(
(\ell+1)s^j_\ell\frac{\partial}{\partial s^j_\ell}
+\ell\bar s^j_\ell\frac{\partial}{\partial \bar s^j_\ell}
\right)
+\sum_{i=1}^s\sum_{a=1}^{r_i-1}
(\ell+\theta_{i,a})x^{i,a}_\ell\frac{\partial}{\partial x^{i,a}_\ell}
\Bigg)\nonumber\\
&+\chi_{\log}(\cC,Q)\sum_{\ell\ge0}
 t^0_{\ell+1}\frac{\partial}{\partial t^1_\ell}
+\frac{\chi_{\log}(\cC,Q)}{2}(t^0_0)^2
+c_0(\mathcal C/Q),
\end{align*}
where the constant
\begin{equation*}
c_0(\mathcal C/Q)
= \frac14\sum_{i=1}^s\sum_{a=1}^{r_i-1}
  \left(\frac14-\left(\frac{a}{r_i}-\frac12\right)^2\right)=\sum_{i=1}^s\frac{r_i^2-1}{24r_i}.
\end{equation*}
To write $L_n$, $n\geq 1$, we use the Pochhammer symbol:
\begin{equation*}
    (1+x)_n\coloneqq
    \begin{cases}
    (x+1) ... (x+n),\, n>0, \\
    1,\, n=0,\\
    (x(x-1) ... (x+n+1))^{-1},\, n<0.
    \end{cases}
\end{equation*}
And we regard 
\[(\ell)_{n+1}\sum_{j=\ell}^{n+\ell}\frac{1}{j}=\frac{d}{d\ell}(\ell)_{n+1}.\]
In particular, when $\ell=0$, 
\[(\ell)_{n+1}\sum_{j=\ell}^{n+\ell}\frac{1}{j}=n!.\]

For $n\ge1$, define
\begin{align*}
L_n={}&
-(n+1)!\frac{\partial}{\partial t^0_{n+1}}
-\chi_{\log}(\cC, Q)(n+1)!\sum_{j=1}^{n+1}\frac{1}{j}
\frac{\partial}{\partial t^1_n}\nonumber\\
&+\sum_{\ell\ge0}\left[
(\ell)_{n+1}t^0_\ell\frac{\partial}{\partial t^0_{\ell+n}}
+(\ell+1)_{n+1}t^1_\ell\frac{\partial}{\partial t^1_{\ell+n}}
\right]\nonumber\\
&+\sum_{\ell\ge0}\sum_{j=1}^g\left[
(\ell+1)_{n+1}s^j_\ell\frac{\partial}{\partial s^j_{\ell+n}}
+(\ell)_{n+1}\bar s^j_\ell\frac{\partial}{\partial \bar s^j_{\ell+n}}
\right]\nonumber\\
&+\sum_{\ell\ge0}\sum_{i=1}^s\sum_{a=1}^{r_i-1}
(\ell+\theta_{i,a})_{n+1}
 x^{i,a}_\ell\frac{\partial}{\partial x^{i,a}_{\ell+n}}\nonumber\\
&+\chi_{\log}(\cC, Q)\sum_{\ell\ge0}
(\ell)_{n+1}\sum_{j=\ell}^{n+\ell}\frac{1}{j}\,t^0_\ell
\frac{\partial}{\partial t^1_{\ell+n-1}}\nonumber\\
&+\frac{\chi_{\log}(\cC, Q)}2
\sum_{p=0}^{n-2}(p+1)!(n-p-1)!
\frac{\partial^2}{\partial t^1_p\,\partial t^1_{n-p-2}}\nonumber\\
&+\frac{1}{2}\sum_{i=1}^s\sum_{a=1}^{r_i-1}
\sum_{p=0}^{n-1}
r_i(\theta_{i,a})_{p+1}(1-\theta_{i,a})_{n-p}
\frac{\partial^2}{\partial x^{i,a}_p\,\partial x^{i,a^\vee}_{n-p-1}}.
\end{align*}

These operators satisfy
\begin{equation*}
[L_k,L_l]=(k-l)L_{k+l}.
\end{equation*}
When there are no orbifold points (equivalently, $s=0$), these $L_n$ 
specialize to the Okounkov-Pandharipande relative Virasoro operators \cite{OP3}. When $Q$ is empty, they are the specialization of the Jiang–Tseng Virasoro operators for the orbifold curve $\cC$.

\subsection{Main theorem and proof strategy}

\begin{thm}\label{thm:main}
For all $k\ge -1$,
\begin{equation*}
L_kZ^{\cC/Q}_d[\eta^1,\ldots,\eta^m]=0.
\end{equation*}
\end{thm}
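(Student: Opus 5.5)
We follow the Okounkov--Pandharipande strategy for nonsingular target curves \cite{OP3}: degeneration, plus a small list of basic building blocks on which the constraints are checked directly. Three facts drive the argument.

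First, \textbf{compatibility of the constraints with degeneration.} We degenerate $\cC$ into a union of pieces, each glued along ordinary points. Each orbifold point $p_i$ is isolated in its own component $\PP^1[\sqrt[r_i]{0}]$ relative to $\infty$, and all remaining genus and relative points sit on a nonsingular curve. The orbifold degeneration formula (Abramovich--Fantechi) expresses $Z^{\cC/Q}_d$ as a sum over intermediate relative conditions $\mu$. The sum is weighted by $\mathfrak z(\mu)=|\Aut\mu|\prod\mu_i$, and the relative potentials of the pieces appear with the cohomology weighting of the relative insertions. We then show that if $L_k$ annihilates the potentials of two pieces $\cC_1/(Q_1\cup\{q\})$ and $\cC_2/(Q_2\cup\{q\})$, it annihilates the glued potential. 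The operators are additive in the right way:
\begin{itemize}
\item $\chi_{\log}$ is additive under gluing along $q$, since $\chi_{\log}(\cC_1,Q_1\cup q)+\chi_{\log}(\cC_2,Q_2\cup q)=\chi_{\log}(\cC,Q)$;
\item the twisted linear and quadratic terms are local to the $p_i$;
\item the constants $c_0$ add.
\end{itemize}
The nontrivial part is the untwisted part: the insertions $\tau_\ell(\omega)$, $\tau_\ell(\id)$ and the odd classes must be split across the degeneration. Following OP, we verify the gluing by passing to operator form. The relative conditions become states in Fock space, and the degeneration formula becomes a matrix product $\langle Z_1|Z_2\rangle$. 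We check that $L_k$ splits as $L^{(1)}_k\otimes 1+1\otimes L^{(2)}_k$ modulo terms that cancel in the pairing; for $s=0$ this is the statement OP prove. This reduces Theorem~\ref{thm:main} to three cases: (a) nonsingular curves relative to ordinary points, which is \cite{OP3}; (b) the orbifold cap $\PP^1[\sqrt[r]{0}]/\infty$; (c) the degree-zero or genus-reduction pieces if needed. The odd classes $\alpha_j,\beta_j$ are handled as in OP via the genus-one degeneration and the monodromy/supercommutation argument. Those classes live on the untwisted sector, so that argument transfers verbatim.

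Second, \textbf{the orbifold cap.} We must prove $L_k Z^{\PP^1[\sqrt[r]{0}]/\infty}=0$. This target is toric with a $\C^*$-action fixing $0$ and $\infty$. The plan is to use equivariant relative virtual localization. The fixed loci at the orbifold point $0$ are Hodge integrals over $\bM_{g,n}(B\mu_r)$. The loci at $\infty$ are rubber integrals, which are handled exactly as in OP's equivariant cap. We first establish an \emph{equivariant} Virasoro constraint for the cap. At the $0$ fixed point, the relevant theory is the $\mu_r$-Hurwitz--Hodge theory of $[\C/\mu_r]$, which is semisimple. Its Virasoro constraints follow from Givental--Teleman, or from the known Virasoro constraints for $B\mu_r$ (the Jarvis--Kimura theory) twisted by the Hodge class via Tseng's quantum Riemann--Roch. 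Combining these with the OP equivariant treatment at $\infty$, we obtain the equivariant operators. Taking the nonequivariant limit then recovers $L_k$ with $\chi_{\log}=1/r$ and $c_0=(r^2-1)/(24r)$.

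Third, \textbf{the main obstacle.} The hardest step is making the equivariant localization argument close. We need the equivariant Virasoro constraints for the cap to hold in the presence of the relative condition. OP do this by combining the semisimple reconstruction at a fixed point with a uniqueness/dimension argument for the rubber and the $\mathrm{GW}/\mathrm{H}$ correspondence. Here we would first prove the orbifold analogue of the GW/Hurwitz completed-cycle correspondence for the orbifold cap. This identifies $Z$ with a vacuum expectation of $\mathcal E$-operators together with an operator encoding $\phi_a$ insertions. The Virasoro constraints would then become commutation identities of the form $L_k\leftrightarrow$ explicit Fock-space operators, checked via the $\mathcal E$-operator algebra.

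If this fails, the fallback is to reconstruct all cap invariants from the degree-zero and genus-zero data. That reconstruction would proceed via the Givental $R$-matrix of the semisimple equivariant theory, where Virasoro holds by Teleman's theorem and Jiang--Tseng genus-zero results. We would then argue that the nonequivariant limit exists term by term.
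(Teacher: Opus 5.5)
Your reduction step is sound and matches the paper. The coefficientwise Virasoro defect is compatible with the degeneration formula at an ordinary node: $\chi_{\log}$ and $c_0$ are additive, the twisted terms are local, each $\tau_\ell(\omega)$ is assigned to one side, unit insertions are distributed over subsets, and odd classes stay on the positive-genus side. Induction on the number of orbifold points together with \cite{OP3} then reduces everything to $\OrbCap_r=(\PP^1_{r,1},\infty)$.

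The gap is the cap itself. Your main route (localization, Virasoro for the Hurwitz--Hodge theory of $[\C/\mu_r]$ at $0$, rubber at $\infty$, then a nonequivariant limit) does not close, because Virasoro constraints for the vertex theories do not pass through a localization graph sum to the total theory on their own. The mechanism that transports them, Givental's quantized $R$-matrix action together with Teleman's classification, exists only for absolute semisimple CohFTs. The relative cap, with a fixed boundary partition and rubber contributions, is not a CohFT. You also never define the equivariant relative operators whose limit is supposed to be $L_k$. Your $R$-matrix fallback fails for the same reason.

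Your ``orbifold GW/H'' alternative names the right destination but supplies none of the inputs. First, it needs an explicit nonequivariant operator formula for the orbifold cap carrying twisted insertions and an arbitrary relative profile. In the paper this comes from Johnson's formula for $\PP^1_{r,s}$, the large-$s$ orbifold/relative correspondence, the block analysis of \cite{KW23}, and inclusion--exclusion for $\id=([0]-[\infty])/t$. Second, it needs the identities matching $L_{k-1}$ term by term. Above all this means the left-vacuum identity $\langle0|U_k=\langle0|R_k$, which is where the coefficient $1/r=\chi_{\log}$, the $\omega\omega$ and $\phi_a\phi_{a^\vee}$ quadratic terms, and $c_0$ actually come from.

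The idea you are missing for a short proof is to apply semisimplicity to the absolute orbifold line, not to a fixed point of the relative cap. The quantum cohomology of $\PP^1_{r,1}$ is semisimple \cite{MT1}, so Givental--Teleman gives its absolute Virasoro constraints. Degenerate $\PP^1_{r,1}\rightsquigarrow\OrbCap_r\cup_p(\PP^1,p)$ and place stationary insertions $\mathbf L$ on the ordinary cap. Your compatibility statement, together with relative Virasoro for the ordinary cap \cite{OP3}, yields
\[
0=\sum_{\lambda\vdash d}\mathfrak z(\lambda)\,\Delta^{\OrbCap_r}_{k,d}(\,\cdot\,|\,\lambda)\,\langle\mathbf L\,|\,\lambda\rangle^{\bullet,\mathrm{Cap}}_d
\]
modulo terms with fewer unit insertions on the orbifold side. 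The matrix $\bigl(\langle\mathbf L\,|\,\lambda\rangle^{\bullet,\mathrm{Cap}}_d\bigr)_{\mathbf L,\lambda}$ has full column rank, by the completed-cycle GW/H formula plus invertibility of the character table of $S(d)$. Induction on the number of unit insertions therefore kills every defect of $\OrbCap_r$. This is the paper's second cap proof; its first is the operator computation described above.
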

We obtain the Virasoro constraints for the absolute orbifold theory of $\cC$ if $m=0$.

\begin{cor}
Jiang-Tseng's Virasoro constraints, Conjecture 3.1 of \cite{JT}, hold for arbitrary smooth projective effective orbifold curves.
\end{cor}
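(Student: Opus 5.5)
Since the operators satisfy $[L_k,L_l]=(k-l)L_{k+l}$, I will use the standard reduction: it is enough to prove the constraints $L_{-1}Z=0$, $L_0Z=0$ and $L_2Z=0$, because $L_1$ and all $L_k$ with $k\ge 3$ lie in the Lie algebra they generate. For example, $[L_2,L_{-1}]=3L_1$ and $[L_k,L_1]=(k-1)L_{k+1}$. The cases $L_{-1}$ and $L_0$ come from the string and dilaton equations together with the divisor equation for $\omega$, and from the degree axiom with age grading. In these arguments the correction terms $\tfrac1{2r_i}x^{i,a}_0x^{i,a^\vee}_0$ and $c_0(\cC/Q)$ arise from the unstable contributions of constant maps in genus $0$ and genus $1$ to a twisted point. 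With relative conditions present, $\chi_{\log}$ enters through the degree computation $\dim[\bM]^{\vir}=(1-g)(1-1)+d\chi_{\log}+\sum\ldots$, and this accounts for the replacement of $\chi$ by $\chi_{\log}$. The real content is therefore $L_2$, or more precisely $L_k$ for all $k$ established by a uniform argument.

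For the main step I will run a degeneration argument. I degenerate $\cC$ at ordinary points into the coarse curve $C$ relative to new ordinary points, glued to orbifold caps $\OrbCap_{r}=\PP^1[\sqrt[r]{0}]/\infty$, one cap for each orbifold point $p_i$. The degeneration formula expresses $Z^{\cC/Q}$ as a sum over intermediate partitions. It pairs the relative potential of $C/(Q\cup\{q'_1,\dots,q'_s\})$ with the cap potentials $Z^{\OrbCap_{r_i}/\infty}[\mu]$, and the gluing factor is $\mathfrak z(\mu)\,(\text{cohomology weights})$. I also move the twisted insertions $\phi_{i,a}$ onto the caps. Okounkov--Pandharipande \cite{OP3} already prove the Virasoro constraints for the non-orbifold relative theory of $C$. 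Their argument shows that the relative Virasoro operators are compatible with degeneration, provided the constraints hold on each piece for the corresponding $\chi_{\log}$; the relative quadratic terms cancel against the gluing. Since $\chi_{\log}$ is additive under this gluing,
\[
\chi_{\log}(\cC,Q)=\chi_{\log}(C,Q\cup Q')+\sum_i\chi_{\log}(\OrbCap_{r_i},\infty),
\]
the problem reduces to proving Theorem~\ref{thm:main} for a single orbifold cap $\OrbCap_r/\infty$.

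For the cap I will use the $\C^*$-action on $\PP^1[\sqrt[r]{0}]$ and virtual localization. Fixed loci involve the relative theory at $\infty$, which is rubber and is handled by the Okounkov--Pandharipande operator formalism. They also involve Hodge integrals over $\bM_{g,n}(B\mu_r)$ at $0$, which are computed via orbifold ELSV or Johnson--Pandharipande--Tseng type formulas, that is, by $\mu_r$-Hurwitz numbers. Equivalently, I can express the stationary cap theory through a GW/Hurwitz correspondence with completed cycles, written in the infinite wedge with an $r$-twisted vertex operator. I then verify the Virasoro constraints as an operator identity: $L_k$ should correspond to a commutation relation between the energy or $\mathcal E$-operators and the twisted vertex operator. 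The Pochhammer factors $(\ell+\theta)_{n+1}$ should come from the shifts by $a/r$ in the twisted sector. Non-stationary insertions ($\id$ and $H^1$) can be removed using string, dilaton, divisor and the Okounkov--Pandharipande reconstruction, so I only need the stationary and twisted sector.

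I expect the main obstacle to be this final operator computation for the cap with twisted insertions. It requires identifying the quadratic twisted term $\tfrac12\sum r(\theta)_{p+1}(1-\theta)_{n-p}\partial_{x^{a}_p}\partial_{x^{a^\vee}_{n-p-1}}$ with the node-splitting contributions in the $B\mu_r$ Hodge integrals. I would first check genus $0$ and small $k$ by hand, and then look for a Givental-style description of the cap. The twisted sector of $B\mu_r$ is semisimple-like after localization, so its $R$-matrix would be given by the Gamma-function asymptotics $\Gamma(\theta+z)$ whose coefficients produce exactly these Pochhammer symbols. Conjugating the standard $L_k$ by this $R$-matrix should give the constraints. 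The Corollary then follows by taking $m=0$.
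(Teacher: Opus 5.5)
Your outer architecture agrees with the paper. The reduction to the caps $\OrbCap_{r_i}$ by ordinary-node degenerations is Propositions~\ref{prop:virasoro-defect-degeneration} and~\ref{prop:red2orbicap}: it uses additivity of $\chi_{\log}$ and the fact that the twisted quadratic term and $c_0$ are local to the piece containing $p_i$. The corollary is then the case $m=0$. Two small corrections: the $L_k$ do not act on the relative data, so there are no ``relative quadratic terms'' to cancel against the gluing; and the virtual dimension contains $(1-g)(1-3)$, not $(1-g)(1-1)$.

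\textbf{The gap is the cap itself, which carries all the content.} Your plan there rests on removing descendants of $\id$ by string, dilaton, divisor and the Okounkov--Pandharipande reconstruction, so that only the stationary and twisted sectors matter. This fails. The leading term of $L_{k-1}$ is $-k!\,\partial/\partial t^0_k$, so every nontrivial coefficient of $L_{k-1}Z^{\OrbCap_r}[\nu]=0$ with $k\ge 2$ expresses $k!\langle\tau_k(\id)\,\mathbf B\mid\nu\rangle$ through other invariants. String, dilaton and divisor do not remove $\tau_k(\id)$ for $k\ge 2$. The reconstruction of such descendants from the stationary sector is a consequence of the Virasoro constraints, so invoking it here is circular. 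In \cite{OP3} the constraints themselves are proved from the equivariant theory via a non-equivariant limit, which is exactly the analysis you skip. A stationary GW/Hurwitz formula with $r$-twisted vertex operators therefore cannot even express the identity to be checked. The remaining steps (``should correspond'', checking small $k$ by hand, looking for an $R$-matrix) are a program rather than an argument. In particular, the twisted quadratic term, which you name as the main obstacle, is left open.

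The paper fills this gap in two independent ways.

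\emph{First cap proof.} One needs a closed formula for cap invariants with arbitrary $\id$-descendants. Take Johnson's equivariant formula after the large-$s$ orbifold--relative limit, write $\id=([0]-[\infty])/t$, and prove the inclusion--exclusion cancellation of Proposition~\ref{prop:M-redu}. This gives Theorem~\ref{thm:noneq-cap}, in which identity insertions become the operators $A_B^{\vee,r}$, each with a $U$-part and a logarithmic $P$-part. The constraint then becomes an operator identity:
\begin{itemize}
\item the marked identity recursion (Theorem~\ref{thm:marked-recursion}) gives the $\id+\id$ reactions;
\item the commutators $[P_m,U_k]$ and $[T_{m,a},U_k]$ give the $\id+\omega$ and $\id+\phi_a$ reactions;
\item the left-vacuum identity $\langle0|U_k=\langle0|R_k$ (Proposition~\ref{prop:vacuum-removal}) gives the linear $\omega$-term, the $\omega\omega$-term, and exactly the coefficients $r(\theta)_{p+1}(1-\theta)_{k-p-1}$ of the twisted term.
\end{itemize}

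\emph{Second cap proof.} Your $R$-matrix instinct does work, but for the absolute $\PP^1_{r,1}$, not for the relative cap. The quantum cohomology of $\PP^1_{r,1}$ is semisimple \cite{MT1}, so Givental--Teleman applies to it, whereas it does not apply to a relative theory. One then transfers the constraints to $\OrbCap_r$ by degenerating $\PP^1_{r,1}\rightsquigarrow\OrbCap_r\cup_p\mathrm{Cap}$. The degeneration formula is inverted using the full column rank of the matrix of stationary ordinary-cap invariants (Lemma~\ref{lem:stationary-cap-rank}, Theorem~\ref{thm:cap-inversion}), by induction on the number of unit insertions.
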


\paragraph{{\bf Proof strategy.}}
The proof consists of a common global-to-local reduction followed by two independent proofs of the local statement.  We first show in Section~\ref{sec:degeneration} that the Virasoro constraints are compatible with the degeneration formula (Proposition \ref{prop:virasoro-defect-degeneration}). Successively separating off the orbifold points by ordinary-node degeneration then reduces Theorem~\ref{thm:main} to the Virasoro constraints for the orbifold caps $(\PP^1_{r,1},\infty)$ where $\PP^1_{r,1}=\PP^1[\sqrt[r]{0}]$ , thanks to the Virasoro constraints for ordinary curves established in \cite{OP3}. We then give two independent proofs for orbifold caps in Part~\ref{part:operator-proof} and 
Part~\ref{part:cap-inversion-proof} respectively. 

The main new ingredient in the first cap proof is an explicit non-equivariant infinite-wedge formula for orbifold caps.  Starting from Johnson's equivariant operator formula for orbifold lines \(\PP^1_{r,s}\) \cite{J}, we combine the large-root operator analysis of Urundolil Kumaran--Wu \cite{KW23} with the orbifold--relative correspondence \cites{TY,FWY,FWY2}, and then extract the large-$s$ constant term and take the non-equivariant limit.  The resulting formula simultaneously encodes arbitrary twisted and untwisted descendants and an arbitrary relative profile.  Consequently, the cap theory need not be reconstructed inductively.  After distinguishing one unit insertion, explicit identities in the operator algebra reproduce, term by term, all linear and quadratic contributions to the Virasoro operators, including the age-dependent twisted-sector terms.  The distinguished insertion is used only to extract this operator identity; no induction on the number of markings or on their descendant degrees is involved.

The second cap proof is independent of the operator formalism. We start from the orbifold lines \(\PP^{1}_{r,1}\). Note that one can degenerate \(\PP^{1}_{r,1}\) to a union of the orbifold cap and the ordinary cap. By the degeneration formula, we obtain a linear system for orbifold cap invariants, with coefficients given by ordinary cap, and constant terms given by \(\PP^{1}_{r,1}\). The full-column-rank property \cite{OP3} of the matrix of ordinary cap invariants allows this system to be solvable uniquely. Observe that the ordinary cap is known to satisfy the relative Virasoro constraint; moreover, the orbifold lines \(\PP^{1}_{r,1}\) have semisimple quantum cohomology \cite{MT1}, and hence the Givental--Teleman reconstruction yields their absolute Virasoro constraints. This allows us to obtain the Virasoro constraints for the orbifold cap. Either cap proof, combined with the same degeneration reduction, proves Theorem~\ref{thm:main}.


\begin{rmk}
Theorem~\ref{thm:main} concerns orbifold curves with trivial generic
stabilizer.  Part~\ref{part:gerbe-extension} considers a smooth proper
one-dimensional Deligne--Mumford stack whose generic stabilizer may be
nontrivial.  The known gerbe-decomposition results reduce the essentially
trivial case, in the sense of \cite{FMN}, to the effective theory proved
here.  For a general gerbe with
trivial band, the same reduction produces Gromov--Witten theories twisted by
flat \(\C^*\)-gerbes; proving the corresponding twisted Virasoro constraints
is the additional step required to extend the theorem to that case. We do
not prove that twisted extension in this paper.
\end{rmk}

\subsection{Organization of the paper}
Section~\ref{sec:degeneration} establishes the compatibility of Virasoro constraints with degeneration formula
and the reduction to orbifold caps.  Part~\ref{part:operator-proof} develops
the infinite-wedge operator formula and gives the first cap proof.
Part~\ref{part:cap-inversion-proof} proves the cap inversion statement and
gives the second cap proof.  Part~\ref{part:gerbe-extension} explains the
extension to essentially trivial gerbes over effective orbifold curves. Appendix~\ref{app:vacuum} contains the auxiliary commutator and vacuum
calculations used in the first proof.

\subsection{Acknowledgment}
Hsian-Hua Tseng thanks Paul Johnson for early collaborations. Part of this work was carried out during Longting Wu’s visit to ETH Z\"urich. He thanks Rahul Pandharipande for his hospitality. Pengrong Jin and Longting Wu are supported by the National Key R\&D Program of China (No. 2022YFA1006200) and NSFC 12522106. Jianxun Hu and Hua-Zhong Ke are supported by the National Key R\&D Program of China (No. 2023YFA1009801) and NSFC (No. 12531003 for J.Hu, No. 12271532 for H.-Z.Ke).

\section{Degeneration and reduction}\label{sec:degeneration}

This section proves the global-to-local reduction from orbifold curves to orbifold caps $\OrbCap_r:=(\mathbb P^1_{r,1},\infty)$ (Proposition \ref{prop:red2orbicap}).

In this section, let $(\mathcal{C},Q)$ be a curve pair as in
Section~\ref{section-relativeorbifoldGW}, and consider a degeneration
\[
(\cC,Q)
\rightsquigarrow
(\cC',Q'\cup\{p\})
\cup_p
(\cC_0,Q_0\cup\{p\}),
\qquad
Q=Q'\sqcup Q_0,
\]
where \(p\) is an ordinary non-orbifold point disjoint from \(Q\), and the
genus of \(\cC_0\) is zero.  Write
\[
\boldsymbol\eta=(\eta^1,\ldots,\eta^m)
\]
for the relative profiles along \(Q\), and let \(\boldsymbol\eta'\) and
\(\boldsymbol\eta_0\) be the corresponding relative profiles along \(Q'\) and \(Q_0\), respectively.
For this degeneration, we choose representatives
\((\gamma',\gamma_0)\in H^*_{\CR}(\cC')\times H^*_{\CR}(\cC_0)\) of
\(\gamma\in H^*_{\CR}(\cC)\) as follows.  For \(\gamma=\mathbf 1\), we choose
\((\gamma',\gamma_0)=(\mathbf 1,\mathbf 1)\).  For \(\gamma=\alpha\), we choose
\((\gamma',\gamma_0)=(\alpha,0)\), and we make the same choice for
\(\gamma=\beta\).  For \(\gamma=\phi_{i,a}\), we choose
\((\gamma',\gamma_0)=(\phi_{i,a},0)\) when \(p_i\in\cC'\), and
\((\gamma',\gamma_0)=(0,\phi_{i,a})\) when \(p_i\in\cC_0\).  We specify
particular representatives for \(\gamma=\omega\) when needed.


\subsection{The relative-orbifold degeneration formula}

The degeneration formula of Gromov-Witten invariants was established by Li--Ruan \cite{LR}, Ionel--Parker \cites{IPsum} and Li \cite{Jun2} for manifolds, and by  Chen--Li--Sun--Zhao
\cite{CLSZ} and
Abramovich--Fantechi \cite{AF} for orbifolds.  

We record the degeneration formula which will be used in this article. Recall that on our source curves, absolute markings are labeled, whereas
relative profiles are unlabelled. We use the Abramovich--Fantechi formalism.

\begin{lem}
\label{lem:relative-orbifold-degeneration}
With the notation fixed above,
\begin{equation}
\label{eq:relative-orbifold-degeneration}
\begin{aligned}
\left\langle
\prod_{i\in N}\tau_{k_i}(\gamma_i)
\ \middle|\
\boldsymbol\eta',\boldsymbol\eta_0
\right\rangle^{\bullet,\,\cC/Q}_{d}
={}&
\sum_{\mu\vdash d}\mathfrak z(\mu)
\sum_{N=N'\sqcup N_0}(-1)^{\epsilon(N',N_0)}
\\[-1mm]
&\hspace{-35mm}\cdot
\left\langle
\prod_{i\in N'}\tau_{k_i}(\gamma_i)
\ \middle|\
\boldsymbol\eta',\mu
\right\rangle^{\bullet,\,\cC'/(Q'\cup\{p\})}_{d}
\left\langle
\prod_{i\in N_0}\tau_{k_i}(\gamma_{i})
\ \middle|\
\boldsymbol\eta_0,\mu
\right\rangle^{\bullet,\,\cC_0/(Q_0\cup\{p\})}_{d}.
\end{aligned}
\end{equation}
Here \(
\mathfrak z(\mu)
=
|\Aut(\mu)|
\prod_{j=1}^{\ell(\mu)}\mu_j
\). For each $\gamma_i\in H^*_{\CR}(\cC)$ we fix its specializations in $H^*_{\CR}(\cC')$ and $H^*_{\CR}(\cC_0)$. The subsets \(N'\) and \(N_0\) are the sets of the absolute markings
specialized to \(\cC'\) and \(\cC_0\), respectively, and the sign \((-1)^{\epsilon(N',N_0)}\) is fixed in terms of the parity of insertions so that we formally have \[\prod_{i\in N}\gamma_i=(-1)^{\epsilon(N',N_0)}\prod_{i\in N'}\gamma_i\prod_{i\in N_0}\gamma_i,\]
\end{lem}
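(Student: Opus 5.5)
This statement is the degeneration formula specialized to our situation, so the plan is to deduce it from the general orbifold degeneration formula of Abramovich--Fantechi \cite{AF} (equivalently Chen--Li--Sun--Zhao \cite{CLSZ}). We then check that the combinatorial factors and the choice of cohomology representatives reduce to the stated form.

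First we realize the degeneration geometrically. We take a family \(W\to\mathbb A^1\) whose general fiber is \(\cC\) and whose special fiber is \(\cC'\cup_p\cC_0\). This is obtained by deformation to the normal cone of a point of \(C\) away from the \(p_i\) and away from \(Q\); for the genus-zero \(\cC_0\) containing some orbifold points \(p_i\), we bubble along a curve through those points and apply root constructions along the horizontal divisors \(p_i\times\mathbb A^1\). The relative divisors \(Q\times\mathbb A^1\) specialize to \(Q'\) and \(Q_0\). The crucial point is that the singular locus \(p\) is an \emph{ordinary} point, so the gluing divisor has trivial stabilizer. Consequently, in the Abramovich--Fantechi formula the twisted sectors of the divisor are absent and the matching data reduce to a single partition \(\mu\vdash d\) of contact orders. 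The usual factor \(\prod\mu_j\) (gluing multiplicities) together with \(|\Aut(\mu)|\) (from unlabelled relative markings, combined with the diagonal splitting \(\sum\delta\otimes\delta^\vee=\mathrm{pt}\otimes\mathrm{pt}\) for the point divisor) produces \(\mathfrak z(\mu)\). Note that the cohomology of \(p\) is one-dimensional, so no sum over a weighted basis of the divisor appears beyond the contact profile. Here we use the convention in which relative conditions \(\mu\) on each side are weighted by the point class with the normalization built into the paper's bracket.

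Second we address the insertions. Invariance of virtual classes in the family gives the general fiber invariant as the sum over splittings. For each \(\gamma_i\) we need a class on \(W\) (or on \(I W\)) restricting to \(\gamma_i\) on \(\cC\) and to \((\gamma_i',\gamma_{i,0})\) on the special fiber. For \(\id\) this is \(\id\). For odd classes \(\alpha,\beta\) we pull back from the \(\cC'\)-side, since \(\cC_0\) has genus zero. Twisted classes \(\phi_{i,a}\) come from the component of \(IW\) over \(p_i\times\mathbb A^1\), which lies in exactly one side. The markings then distribute as \(N=N'\sqcup N_0\), and reordering the odd insertions gives the sign \((-1)^{\epsilon(N',N_0)}\). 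The descendant classes \(\bar\psi\) are pulled back from the coarse moduli, where the ordinary degeneration formula of Li \cite{Jun2} applies compatibly with \(\pi\).

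The main obstacle is the bookkeeping: confirming that disconnected invariants on both sides, with unlabelled relative markings, give exactly the factor \(\mathfrak z(\mu)\) and no extra \(1/|\Aut(\mu)|\) or \(1/\ell(\mu)!\). I would first verify this against the manifold case in \cite{OP3}, where the identical formula holds, and then observe that orbifold points away from \(p\) do not affect the gluing factor.
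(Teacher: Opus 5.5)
Your proposal is correct and is essentially the paper's argument. Both invoke the Abramovich--Fantechi orbifold degeneration formula, use that the gluing point $p$ is non-orbifold (so the sum over the inertia of the divisor collapses and both sides carry the same partition $\mu$), and get $\mathfrak z(\mu)$ by converting labeled to unlabelled relative markings. The paper additionally flags that for $Q\neq\varnothing$ one needs an analogue of that formula with extra relative points disjoint from $p$ (expansions along $Q$ and at $p$ are independent, and $\boldsymbol\eta',\boldsymbol\eta_0$ are carried as fixed data), which you use implicitly when you put the relative divisors into your family; your explicit construction of $W$ and of the class lifts is extra detail that the lemma takes as given.
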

\begin{rmk}
When $\cC$ is an ordinary curve and all $\gamma_i$'s have topologically even degree, \eqref{eq:relative-orbifold-degeneration} is a special case of equation (1.4) of \cite{OP3}.
\end{rmk}

\begin{proof}

When \(Q=\varnothing\), the proof of
\eqref{eq:relative-orbifold-degeneration} is an application of the orbifold
degeneration formula \cite[Theorem~5.9]{AF}. Note that in our degeneration, the
gluing divisor is the non-orbifold point \(p\). It follows that the sum
over the cohomology of the inertia stack of the gluing divisor is
trivial, and the relative conditions on the two branches are indexed
by the same partition \(\mu\). Passing from labeled relative profiles in the Abramovich--Fantechi
formula to unlabelled relative profiles gives the factor
\(\mathfrak z(\mu)\).

When $Q\ne\varnothing$, the proof of
\eqref{eq:relative-orbifold-degeneration} follows by the corresponding analogue of \cite[Theorem~5.9]{AF}. To prove this analogue result, note that the points of \(Q\) are disjoint from $p$. Expansions along
\(Q\) and expansions at $p$ can therefore be formed
independently.  Each relative point of \(Q\), together with its relative
markings and contact orders, remains on the unique component containing
that point. Consequently, the normalization and gluing construction at \(p\), as
well as the comparison of the obstruction theories, is unchanged from
the proof of \cite[Theorem~5.9]{AF}.  The relative conditions
\(\boldsymbol\eta'\) and \(\boldsymbol\eta_0\) are carried as fixed
additional data and introduce neither new matching conditions nor
additional gluing factors. We leave details of verification to interested readers.
\end{proof}

\subsection{Reduction to the orbifold cap}


Let $\mathbf B$ be an ordered absolute insertion list with insertions of type $\tau_\ell(\gamma)$, where $\gamma\in\{\id,\omega,\alpha_j,\beta_j,\phi_{i,a}\}$.  For
\(A=\tau_\ell(\gamma)\), when \(\gamma\) has even ordinary cohomological
degree, we say that \(A\) is even, and let \(m_A(\mathbf B)\)
be its multiplicity in \(\mathbf B\). Set
\[
\mathbf B!
:=
\prod_{A\ {\rm even}}m_A(\mathbf B)!.
\]
Let $\mathbf t_A$ be the super-variable for \(A=\tau_\ell(\gamma)\) whose parity depends on $\gamma$, i.e.
\[
\mathbf t_A=
\begin{cases}
t^0_\ell,&A=\tau_\ell(\id),\\
t^1_\ell,&A=\tau_\ell(\omega),\\
s^j_\ell,&A=\tau_\ell(\alpha_j),\quad 1\leq j\leq g,\\
\bar s^j_\ell,&A=\tau_\ell(\beta_j),\quad 1\leq j\leq g,\\
x^{i,a}_\ell,&A=\tau_\ell(\phi_{i,a}),\quad 1\leq i\leq s,\ 1\leq a<r_i.
\end{cases}
\]
For $\mathbf B=[A_1,\dots,A_m]$, set $\mathbf t_\mathbf B:=\mathbf t_{A_1}\cdots\mathbf t_{A_m}$.

For the relative data \(\boldsymbol\eta\) along \(Q\), define the coefficientwise
Virasoro defect of \(\mathcal{C}/Q\) by
\[
\Delta_{k,d}^{\mathcal{C}/Q}
\left(
\mathbf B
\ \middle|\
\boldsymbol\eta
\right)
\coloneqq
\mathbf B!\,
\left[\mathbf t_{\mathbf B}\right]
\left(L_kZ_d^{\mathcal{C}/Q}[\boldsymbol\eta]\right).
\]
Here \([\mathbf t_{\mathbf B}](\dots)\) denotes the coefficient of $\mathbf t_{\mathbf B}$ in \((\dots)\).
The Virasoro conjecture for \(\mathcal{C}/Q\) is therefore equivalent to
\begin{align}\label{formula-virasorodefect}
\Delta_{k,d}^{\mathcal{C}/Q}
\left(\mathbf B\ \middle|\ \boldsymbol\eta\right)=0
\end{align}
for any \(k\geq-1\), \(d\geq0\), any ordered absolute insertion lists
\(\mathbf B\), and any relative data \(\boldsymbol\eta\).

\begin{prop}
\label{prop:virasoro-defect-degeneration}
Use the relative-data notation fixed at the beginning of this section.
Assume that the coarse curve underlying \(\cC_0\)
has genus zero.
Let
\[
\mathbf B=
\prod_{i=1}^{u}\tau_{a_i}(\id)
\prod_{j=1}^{v}\tau_{b_j}(\omega)
\mathbf A'\mathbf A_0,
\]
where \(\mathbf A'\) and \(\mathbf A_0\) are the ordered products of the
remaining insertions supported on \(\cC'\) and \(\cC_0\), respectively. We fix a partition \(\{1,\ldots,v\}=T\sqcup T'\), such that for $j\in T$, the representative of $\omega$ in $\tau_{b_j}(\omega)$ is chosen to be $(\omega',\omega_0)=(0,\omega)$, while for $j\in T'$, the representative of $\omega$ in $\tau_{b_j}(\omega)$ is chosen to be $(\omega',\omega_0)=(\omega,0)$. For \(S\subset\{1,\ldots,u\}\), set
\[
\mathbf B'(S)
=
\prod_{i\notin S}\tau_{a_i}(\id)
\prod_{j\notin T}\tau_{b_j}(\omega)
\mathbf A',
\qquad
\mathbf B_0(S)
=
\prod_{i\in S}\tau_{a_i}(\id)
\prod_{j\in T}\tau_{b_j}(\omega)
\mathbf A_0.
\]

Then, for every \(k\geq-1\),
\begin{equation}
\label{eq:virasoro-defect-degeneration}
\begin{aligned}
&
\Delta_{k,d}^{\cC/Q}
\left(
\mathbf B
\ \middle|\
\boldsymbol\eta
\right)
\\
&=
\sum_{S\subset\{1,\ldots,u\}}
\sum_{\mu\vdash d}
\mathfrak z(\mu)
\Bigg[
\Delta_{k,d}^{\cC'/(Q'\cup\{p\})}
\left(
\mathbf B'(S)
\ \middle|\
\boldsymbol\eta',\mu
\right)\cdot
\left\langle
\mathbf B_0(S)
\ \middle|\
\boldsymbol\eta_0,\mu
\right\rangle^{\bullet,\,\cC_0/(Q_0\cup\{p\})}_{d}
\\
&\hspace{25mm}
+
\left\langle
\mathbf B'(S)
\ \middle|\
\boldsymbol\eta',\mu
\right\rangle^{\bullet,\,\cC'/(Q'\cup\{p\})}_{d}\cdot
\Delta_{k,d}^{\cC_0/(Q_0\cup\{p\})}
\left(
\mathbf B_0(S)
\ \middle|\
\boldsymbol\eta_0,\mu
\right)
\Bigg].
\end{aligned}
\end{equation}
\end{prop}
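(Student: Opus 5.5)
We prove the proposition by applying the operator $L_k$ term by term to the degeneration formula of Lemma~\ref{lem:relative-orbifold-degeneration}, read coefficientwise. Write $L_k^{\cC}$, $L_k^{\cC'}$, $L_k^{\cC_0}$ for the operators attached to $(\cC,Q)$, $(\cC',Q'\cup\{p\})$ and $(\cC_0,Q_0\cup\{p\})$. Split each operator into three parts: the constant terms, the linear (first-order) terms, and the quadratic terms, whether differential or multiplicative.

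The first step is to rewrite the degeneration formula at the level of generating functions. Fix $\mu$. For the chosen splitting of the $\omega$-insertions ($T$ versus $T'$) and the diagonal choice $(\id,\id)$ for units, the left side of \eqref{eq:virasoro-defect-degeneration} without $L_k$ becomes a sum over $S\subset\{1,\dots,u\}$ of products of the two relative invariants. The factor $\mathbf B!$ combined with the sum over labelled subsets $N=N'\sqcup N_0$ gives exactly the sum over $S$. No splitting factor appears for the $\omega$ and remaining insertions, because their location is fixed.

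Next we compare each piece of $L_k^{\cC}$ with $L_k^{\cC'}+L_k^{\cC_0}$, using the following identities.
\begin{itemize}
\item[(a)] The constants add: $c_0(\cC/Q)=c_0(\cC'/(Q'\cup\{p\}))+c_0(\cC_0/(Q_0\cup\{p\}))$, since each orbifold point lies on exactly one side.
\item[(b)] The log Euler characteristics add: $\chi_{\log}(\cC,Q)=\chi_{\log}(\cC')+\chi_{\log}(\cC_0)$. Indeed, the node $p$ contributes $-1$ on each side while the genus-zero piece contributes $2$.
\item[(c)] The linear shift terms $\mathbf t_\ell\partial_{\mathbf t_{\ell+k}}$ act on one insertion at a time. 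They therefore distribute over the product according to which side carries the insertion. Unit insertions go to both sides by the $S$-sum, which is compatible because the coefficient is the same on both sides.
\item[(d)] The dilaton/string-type terms $-(k+1)!\,\partial_{t^0_{k+1}}$ insert a unit. By the splitting of $\id$ as $(\id,\id)$, this gives a sum of an insertion on $\cC'$ and an insertion on $\cC_0$.
\item[(e)] The term $-\chi_{\log}(\cdots)\,\partial_{t^1_k}$ inserts $\omega$. For $\omega$ we may choose either representative. We use the additivity (b) and choose $\omega$ on the side whose $\chi_{\log}$ multiplies it; this is legitimate because any representative gives the same invariant.
\item[(f)] The terms $\chi_{\log}\,t^0_\ell\partial_{t^1_{\ell+k-1}}$, which trade $\tau_\ell(\id)$ for $\tau_{\ell+k-1}(\omega)$, are handled the same way. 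The $\omega$ goes to whichever side receives the $\id$ in the $S$-decomposition, weighted by that side's $\chi_{\log}$.
\item[(g)] The twisted quadratic terms involve pairs $x^{i,a},x^{i,a^\vee}$ at a single orbifold point, which lies on one side, so they localize to that side.
\end{itemize}

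The main obstacle is the term in (f) together with the quadratic ordinary terms $\tfrac{\chi_{\log}}2\partial_{t^1_p}\partial_{t^1_{k-p-2}}$ and the multiplicative terms $t^0_0t^1_0$ and $(t^0_0)^2$ in $L_{-1}$ and $L_0$. A pair of $\omega$-insertions could split one to each side. Our plan is to use the freedom of representatives so that both $\omega$'s sit on the same side, namely the side whose $\chi_{\log}$ coefficient is used, by linearity of the degeneration formula in choices of $\omega$. The terms $t^0_0t^1_0$ and $(t^0_0)^2$ are coefficient extractions of the same nature and are handled identically. Similarly, the odd pairing $s^j_0\bar s^j_0$ lives only on $\cC'$, since $\cC_0$ has genus zero. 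Signs from odd classes are tracked by $\epsilon(N',N_0)$, and the left superderivatives commute past even factors consistently.

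Collecting terms, everything that $L_k^{\cC}$ produces regroups into $(L_k^{\cC'}Z')\cdot Z_0+Z'\cdot(L_k^{\cC_0}Z_0)$ with the $S$-sum. This is exactly \eqref{eq:virasoro-defect-degeneration}.
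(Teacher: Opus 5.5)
Your proposal is correct and follows essentially the same term-by-term argument as the paper. Both apply the degeneration formula to each summand of $L_k$ and use additivity of $\chi_{\log}$ and $c_0$. For every $\chi_{\log}$-weighted term, both split $\chi_{\log}(\cC,Q)=\chi_{\log}(\cC',Q'\cup\{p\})+\chi_{\log}(\cC_0,Q_0\cup\{p\})$, then place the newly inserted $\omega$'s on the corresponding side by choice of representative, and the affected units by reindexing $S$.

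The one point you should phrase more carefully is $t^0_0t^1_0$, which is not ``handled identically'': it carries no $\chi_{\log}$ and involves no choice. The deleted $\omega$ already lies on the side fixed by $T\sqcup T'$, and the deleted unit is simply reindexed into that side's list.
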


\begin{proof}

We use the following notations in the proof. Recall that an insertion type is of the form \(\tau_\ell(\gamma)\).
Let \(A\) and \(B\) be insertion types.
For an ordered insertion list \(\mathbf D\), let
\(\Lambda_A(\mathbf D)\) be the set of markings with insertion of type
\(A\) in \(\mathbf D\).  For
\(\lambda\in\Lambda_A(\mathbf D)\), let
\(\mathbf D_{\lambda:A\rightsquigarrow B}\) be the new ordered insertion list obtained by replacing the type \(A\) at the
marking \(\lambda\) in $\mathbf D$ by \(B\).  Let
\(\mathbf D\sqcup\{A\}\) be the  new ordered insertion list obtained by adding one new insertion of type \(A\) after $\mathbf D$.
For two distinct markings \(\lambda,\nu\) of \(\mathbf D\), let
\(\mathbf D\setminus\{\lambda,\nu\}\) be the new  ordered insertion list obtained by deleting the corresponding insertions. 

With these conventions, all four identities below hold when \(A\) and \(B\) are
both even.  The second identity also holds when \(A\) and \(B\) are
both odd:
\begin{align*}
\mathbf D!\,[\mathbf t_{\mathbf D}]\frac{\partial}{\partial\mathbf t_A} Z_d^{\cC/Q}[\boldsymbol\eta]&=\left\langle\mathbf D\sqcup\{A\}\ \middle|\ \boldsymbol\eta\right\rangle_d^{\bullet,\,\cC/Q},\\
\mathbf D!\,[\mathbf t_{\mathbf D}]\,\mathbf t_A\frac{\partial}{\partial\mathbf t_B} Z_d^{\cC/Q}[\boldsymbol\eta]&=\sum_{\lambda\in\Lambda_A(\mathbf D)}\left\langle\mathbf D_{\lambda:A\rightsquigarrow B}\ \middle|\ \boldsymbol\eta\right\rangle_d^{\bullet,\,\cC/Q},\\
\mathbf D!\,[\mathbf t_{\mathbf D}]\frac{\partial^2 }{\partial\mathbf t_A\,\partial\mathbf t_B}Z_d^{\cC/Q}[\boldsymbol\eta]&=\left\langle\mathbf D\sqcup\{A,B\}\ \middle|\ \boldsymbol\eta\right\rangle_d^{\bullet,\,\cC/Q},\\
\mathbf D!\,[\mathbf t_{\mathbf D}]\,\mathbf t_A\mathbf t_B Z_d^{\cC/Q}[\boldsymbol\eta]&=\sum_{\substack{\lambda\in\Lambda_A(\mathbf D),\,\nu\in\Lambda_B(\mathbf D)\\ \lambda\ne\nu}}\left\langle\mathbf D\setminus\{\lambda,\nu\}\ \middle|\ \boldsymbol\eta\right\rangle_d^{\bullet,\,\cC/Q}.
\end{align*}

In what follows, we suppress target-independent scalar coefficients of the operators for ease of notation, but retain
the coefficients involving \(\chi_{\log}\) which is target-dependent. Note that
\begin{equation}\label{eq:chi-log-degeneration-in-proof}
\chi_{\log}(\cC,Q)=\chi_{\log}(\cC',Q'\cup\{p\})+\chi_{\log}(\cC_0,Q_0\cup\{p\}).
\end{equation}

For \(\partial/\partial t^0_{k+1}\), we add
\(\tau_{k+1}(\id)\) in the insertions. Now \eqref{eq:relative-orbifold-degeneration} gives
\[
\begin{aligned}
&\left\langle\mathbf B\sqcup\{\tau_{k+1}(\id)\}
 \ \middle|\ \boldsymbol\eta',\boldsymbol\eta_0\right\rangle_d^{\bullet,\,\cC/Q}
={}\sum_{S\subset\{1,\ldots,u\}}\sum_{\mu\vdash d}\mathfrak z(\mu)\Bigg[
\\[-2mm]
&\quad\left\langle\mathbf B'(S)\sqcup\{\tau_{k+1}(\id)\}
 \ \middle|\ \boldsymbol\eta',\mu\right\rangle_d^{\bullet,\,\cC'/(Q'\cup\{p\})}
 \left\langle\mathbf B_0(S)\ \middle|\ \boldsymbol\eta_0,\mu\right\rangle_d^{\bullet,\,\cC_0/(Q_0\cup\{p\})}
\\[-1mm]
&\quad+\left\langle\mathbf B'(S)\ \middle|\ \boldsymbol\eta',\mu\right\rangle_d^{\bullet,\,\cC'/(Q'\cup\{p\})}
 \left\langle\mathbf B_0(S)\sqcup\{\tau_{k+1}(\id)\}
 \ \middle|\ \boldsymbol\eta_0,\mu\right\rangle_d^{\bullet,\,\cC_0/(Q_0\cup\{p\})}\Bigg].
\end{aligned}
\]

For \(-\chi_{\log}(\cC,Q)\,\partial/\partial t^1_k\), we add
\(\tau_k(\omega)\) in the insertions. From \eqref{eq:chi-log-degeneration-in-proof}, we have
\begin{align*}
 &-\chi_{\log}(\cC,Q)
 \left\langle\mathbf B\sqcup\{\tau_k(\omega)\}
 \ \middle|\ \boldsymbol\eta',\boldsymbol\eta_0\right\rangle_d^{\bullet,\,\cC/Q}\\
 &=
 -\chi_{\log}(\cC',Q'\cup\{p\})
 \left\langle\mathbf B\sqcup\{\tau_k(\omega)\}
 \ \middle|\ \boldsymbol\eta',\boldsymbol\eta_0\right\rangle_d^{\bullet,\,\cC/Q}\\
 &\hspace*{14mm}-\chi_{\log}(\cC_0,Q_0\cup\{p\})
 \left\langle\mathbf B\sqcup\{\tau_k(\omega)\}
 \ \middle|\ \boldsymbol\eta',\boldsymbol\eta_0\right\rangle_d^{\bullet,\,\cC/Q}\\
 &=(I)+(II).
\end{align*}
For the added $\tau_k(\omega)$ in $(I)$, we choose the representative $(\omega',\omega_0)=(\omega,0)$, and for the added $\tau_k(\omega)$ in $(II)$, we choose the representative $(\omega',\omega_0)=(0,\omega)$. Now \eqref{eq:relative-orbifold-degeneration} gives
\[
\begin{aligned}
&-\chi_{\log}(\cC,Q)
 \left\langle\mathbf B\sqcup\{\tau_k(\omega)\}
 \ \middle|\ \boldsymbol\eta',\boldsymbol\eta_0\right\rangle_d^{\bullet,\,\cC/Q}
={}\sum_{S\subset\{1,\ldots,u\}}\sum_{\mu\vdash d}\mathfrak z(\mu)\Bigg[
\\[-2mm]
&\quad-\chi_{\log}(\cC',Q'\cup\{p\})
 \left\langle\mathbf B'(S)\sqcup\{\tau_k(\omega)\}
 \,\middle|\,\boldsymbol\eta',\mu\right\rangle_d^{\bullet,\,\cC'/(Q'\cup\{p\})}
 \left\langle\mathbf B_0(S)\,\middle|\,\boldsymbol\eta_0,\mu\right\rangle_d^{\bullet,\,\cC_0/(Q_0\cup\{p\})}
\\[-1mm]
&\quad-\chi_{\log}(\cC_0,Q_0\cup\{p\})
 \left\langle\mathbf B'(S)\,\middle|\,\boldsymbol\eta',\mu\right\rangle_d^{\bullet,\,\cC'/(Q'\cup\{p\})}
 \left\langle\mathbf B_0(S)\sqcup\{\tau_k(\omega)\}
 \,\middle|\,\boldsymbol\eta_0,\mu\right\rangle_d^{\bullet,\,\cC_0/(Q_0\cup\{p\})}\Bigg].
\end{aligned}
\]

For \(t_A\partial/\partial t_B\), where \(A=\tau_\ell(\gamma)\), \(B=\tau_{\ell+k}(\gamma)\) and
\(\gamma\in\{\id,\omega,\alpha_j,\beta_j,\phi_{i,a}\}\), we replace
\(A\) in \(\mathbf B\) by \(B\).  Now \eqref{eq:relative-orbifold-degeneration} gives
\[
\begin{aligned}
&\sum_{\lambda\in\Lambda_A(\mathbf B)}
 \left\langle\mathbf B_{\lambda:A\rightsquigarrow B}
 \ \middle|\ \boldsymbol\eta',\boldsymbol\eta_0\right\rangle_d^{\bullet,\,\cC/Q}
={}\sum_{S\subset\{1,\ldots,u\}}\sum_{\mu\vdash d}\mathfrak z(\mu)\Bigg[
\\[-2mm]
&\quad\sum_{\lambda\in\Lambda_A(\mathbf B'(S))}
 \left\langle\mathbf B'(S)_{\lambda:A\rightsquigarrow B}
 \ \middle|\ \boldsymbol\eta',\mu\right\rangle_d^{\bullet,\,\cC'/(Q'\cup\{p\})}
 \left\langle\mathbf B_0(S)\ \middle|\ \boldsymbol\eta_0,\mu\right\rangle_d^{\bullet,\,\cC_0/(Q_0\cup\{p\})}
\\[-1mm]
&\quad+\left\langle\mathbf B'(S)\ \middle|\ \boldsymbol\eta',\mu\right\rangle_d^{\bullet,\,\cC'/(Q'\cup\{p\})}
 \sum_{\lambda\in\Lambda_A(\mathbf B_0(S))}
 \left\langle\mathbf B_0(S)_{\lambda:A\rightsquigarrow B}
 \ \middle|\ \boldsymbol\eta_0,\mu\right\rangle_d^{\bullet,\,\cC_0/(Q_0\cup\{p\})}\Bigg].
\end{aligned}
\]

For \(\chi_{\log}(\cC,Q)t^0_\ell\partial_{t^1_{\ell+k-1}}\), setting
\(A=\tau_\ell(\id)\), \(B=\tau_{\ell+k-1}(\omega)\), we replace \(A\) in $\mathbf B$ by \(B\).
Following similar approach for \(-\chi_{\log}(\cC,Q)\,\partial/\partial t^1_k\), we get
\[
\begin{aligned}
&\chi_{\log}(\cC,Q)\sum_{\lambda\in\Lambda_A(\mathbf B)}
 \left\langle\mathbf B_{\lambda:A\rightsquigarrow B}
 \ \middle|\ \boldsymbol\eta',\boldsymbol\eta_0\right\rangle_d^{\bullet,\,\cC/Q}
={}\sum_{S\subset\{1,\ldots,u\}}\sum_{\mu\vdash d}\mathfrak z(\mu)\Bigg[
\\[-2mm]
&\quad\chi_{\log}(\cC',Q'\cup\{p\})
 \sum_{\lambda\in\Lambda_A(\mathbf B'(S))}
 \left\langle\mathbf B'(S)_{\lambda:A\rightsquigarrow B}
 \ \middle|\ \boldsymbol\eta',\mu\right\rangle_d^{\bullet,\,\cC'/(Q'\cup\{p\})}
 \left\langle\mathbf B_0(S)\ \middle|\ \boldsymbol\eta_0,\mu\right\rangle_d^{\bullet,\,\cC_0/(Q_0\cup\{p\})}
\\[-2mm]
&+\chi_{\log}(\cC_0,Q_0\cup\{p\})
 \sum_{\lambda\in\Lambda_A(\mathbf B_0(S))}
 \left\langle\mathbf B'(S)\ \middle|\ \boldsymbol\eta',\mu\right\rangle_d^{\bullet,\,\cC'/(Q'\cup\{p\})}
 \left\langle\mathbf B_0(S)_{\lambda:A\rightsquigarrow B}
 \ \middle|\ \boldsymbol\eta_0,\mu\right\rangle_d^{\bullet,\,\cC_0/(Q_0\cup\{p\})}\Bigg].
\end{aligned}
\]

For
\(\chi_{\log}(\cC,Q)\partial^2/(\partial t^1_q\,\partial t^1_{k-q-2})\),
setting \(A=\tau_q(\omega)\) and \(B=\tau_{k-q-2}(\omega)\), we add \(\{A,B\}\) in the insertions.
Following similar approach for \(-\chi_{\log}(\cC,Q)\,\partial/\partial t^1_k\), we get
\[
\begin{aligned}
&\chi_{\log}(\cC,Q)
 \left\langle\mathbf B\sqcup\{A,B\}
 \ \middle|\ \boldsymbol\eta',\boldsymbol\eta_0\right\rangle_d^{\bullet,\,\cC/Q}
={}\sum_{S\subset\{1,\ldots,u\}}\sum_{\mu\vdash d}\mathfrak z(\mu)\Bigg[
\\[-2mm]
&\quad\chi_{\log}(\cC',Q'\cup\{p\})
 \left\langle\mathbf B'(S)\sqcup\{A,B\}
 \,\middle|\,\boldsymbol\eta',\mu\right\rangle_d^{\bullet,\,\cC'/(Q'\cup\{p\})}
 \left\langle\mathbf B_0(S)\,\middle|\,\boldsymbol\eta_0,\mu\right\rangle_d^{\bullet,\,\cC_0/(Q_0\cup\{p\})}
\\[-1mm]
&\quad+\chi_{\log}(\cC_0,Q_0\cup\{p\})
 \left\langle\mathbf B'(S)\,\middle|\,\boldsymbol\eta',\mu\right\rangle_d^{\bullet,\,\cC'/(Q'\cup\{p\})}
 \left\langle\mathbf B_0(S)\sqcup\{A,B\}
 \,\middle|\,\boldsymbol\eta_0,\mu\right\rangle_d^{\bullet,\,\cC_0/(Q_0\cup\{p\})}\Bigg].
\end{aligned}
\]

For
\(\partial^2/(\partial x^{i,a}_q\,\partial x^{i,a^\vee}_{k-q-1})\), setting
\(A=\tau_q(\phi_{i,a})\) and
\(B=\tau_{k-q-1}(\phi_{i,a^\vee})\),  we add \(\{A,B\}\) in the insertions. Applying \eqref{eq:relative-orbifold-degeneration}, if \(p_i\in\cC'\), then
\[
\begin{aligned}
&\left\langle\mathbf B\sqcup\{A,B\}
 \ \middle|\ \boldsymbol\eta',\boldsymbol\eta_0\right\rangle_d^{\bullet,\,\cC/Q}
\\
={}&\sum_{S\subset\{1,\ldots,u\}}\sum_{\mu\vdash d}\mathfrak z(\mu)
 \left\langle\mathbf B'(S)\sqcup\{A,B\}
 \ \middle|\ \boldsymbol\eta',\mu\right\rangle_d^{\bullet,\,\cC'/(Q'\cup\{p\})}
 \left\langle\mathbf B_0(S)\ \middle|\ \boldsymbol\eta_0,\mu\right\rangle_d^{\bullet,\,\cC_0/(Q_0\cup\{p\})},
\end{aligned}
\]
and if \(p_i\in\cC_0\), then
\[
\begin{aligned}
&\left\langle\mathbf B\sqcup\{A,B\}
 \ \middle|\ \boldsymbol\eta',\boldsymbol\eta_0\right\rangle_d^{\bullet,\,\cC/Q}
\\
={}&\sum_{S\subset\{1,\ldots,u\}}\sum_{\mu\vdash d}\mathfrak z(\mu)
 \left\langle\mathbf B'(S)
 \ \middle|\ \boldsymbol\eta',\mu\right\rangle_d^{\bullet,\,\cC'/(Q'\cup\{p\})}
 \left\langle\mathbf B_0(S)\sqcup\{A,B\}\ \middle|\ \boldsymbol\eta_0,\mu\right\rangle_d^{\bullet,\,\cC_0/(Q_0\cup\{p\})};
\end{aligned}
\]

For \(\chi_{\log}(\cC,Q)(t^0_0)^2\), setting \(A=\tau_0(\id)\), we delete two distinct markings of \(\mathbf B\) with insertion
\(A\).  From \eqref{eq:chi-log-degeneration-in-proof}, we have
\begin{align*}
    &\chi_{\log}(\cC,Q)
 \sum_{\substack{\lambda,\nu\in\Lambda_A(\mathbf B)\\ \lambda\ne\nu}}
 \left\langle\mathbf B\setminus\{\lambda,\nu\}
 \ \middle|\ \boldsymbol\eta',\boldsymbol\eta_0\right\rangle_d^{\bullet,\,\cC/Q}\\
 ={}&\chi_{\log}(\cC',Q'\cup\{p\})
 \sum_{\substack{\lambda,\nu\in\Lambda_A(\mathbf B)\\ \lambda\ne\nu}}
 \left\langle\mathbf B\setminus\{\lambda,\nu\}
 \ \middle|\ \boldsymbol\eta',\boldsymbol\eta_0\right\rangle_d^{\bullet,\,\cC/Q}\\
 &\qquad+\chi_{\log}(\cC_0,Q_0\cup\{p\})
 \sum_{\substack{\lambda,\nu\in\Lambda_A(\mathbf B)\\ \lambda\ne\nu}}
 \left\langle\mathbf B\setminus\{\lambda,\nu\}
 \ \middle|\ \boldsymbol\eta',\boldsymbol\eta_0\right\rangle_d^{\bullet,\,\cC/Q}\\
 ={}&(I)+(II).
\end{align*}
Applying \eqref{eq:relative-orbifold-degeneration}, we get
\begin{align*}
    (I)&=\chi_{\log}(\cC',Q'\cup\{p\})\sum_{\mu\vdash d}\mathfrak z(\mu)\\
    &\cdot\sum_{\substack{S\subset\{1,\ldots,u\}\\\lambda,\nu\in\Lambda_A(\mathbf B'(S)), \lambda\ne\nu}}
 \left\langle\mathbf B'(S)\setminus\{\lambda,\nu\}
 \ \middle|\ \boldsymbol\eta',\mu\right\rangle_d^{\bullet,\,\cC'/(Q'\cup\{p\})}
 \left\langle\mathbf B_0(S)\ \middle|\ \boldsymbol\eta_0,\mu\right\rangle_d^{\bullet,\,\cC_0/(Q_0\cup\{p\})},
\end{align*}
and 
\begin{align*}
    (II)&=\chi_{\log}(\cC_0,Q_0\cup\{p\})\sum_{\mu\vdash d}\mathfrak z(\mu)\\
    &\cdot\sum_{\substack{S\subset\{1,\ldots,u\}\\\lambda,\nu\in\Lambda_A(\mathbf B_0(S)), \lambda\ne\nu}}
 \left\langle\mathbf B'(S)
 \ \middle|\ \boldsymbol\eta',\mu\right\rangle_d^{\bullet,\,\cC'/(Q'\cup\{p\})}
 \left\langle\mathbf B_0(S)\setminus\{\lambda,\nu\}\ \middle|\ \boldsymbol\eta_0,\mu\right\rangle_d^{\bullet,\,\cC_0/(Q_0\cup\{p\})}.
\end{align*}


For $t^0_0t^1_0$, setting $A=\tau_0(\id)$ and $B=\tau_0(\omega)$, we delete two markings $\lambda\in\Lambda_A(\mathbf B)$ and $\nu\in\Lambda_B(\mathbf B)$ in $\mathbf B$. Applying \eqref{eq:relative-orbifold-degeneration}, if $\nu\in T$, then
\[
\begin{aligned}
 &\left\langle\mathbf B\setminus\{\lambda,\nu\}
 \ \middle|\ \boldsymbol\eta',\boldsymbol\eta_0\right\rangle_d^{\bullet,\,\cC/Q}
\\
={}&\sum_{\mu\vdash d}\mathfrak z(\mu)
\sum_{\substack{S\subset\{1,\ldots,u\}\\\lambda\in\Lambda_A(\mathbf B_0(S))\\\nu\in\Lambda_B(\mathbf B_0(S))}}
 \left\langle\mathbf B'(S)\ \middle|\ \boldsymbol\eta',\mu\right\rangle_d^{\bullet,\,\cC'/(Q'\cup\{p\})}
 \left\langle\mathbf B_0(S)\setminus\{\lambda,\nu\}
 \ \middle|\ \boldsymbol\eta_0,\mu\right\rangle_d^{\bullet,\,\cC_0/(Q_0\cup\{p\})},
\end{aligned}
\]
and if $\nu\in T'$, then
\[
\begin{aligned}
&\left\langle\mathbf B\setminus\{\lambda,\nu\}
 \ \middle|\ \boldsymbol\eta',\boldsymbol\eta_0\right\rangle_d^{\bullet,\,\cC/Q}
\\
={}&\sum_{\mu\vdash d}\mathfrak z(\mu)
 \sum_{\substack{S\subset\{1,\ldots,u\}\\\lambda\in\Lambda_A(\mathbf B'(S))\\\nu\in\Lambda_B(\mathbf B'(S))}}
 \left\langle\mathbf B'(S)\setminus\{\lambda,\nu\}
 \ \middle|\ \boldsymbol\eta',\mu\right\rangle_d^{\bullet,\,\cC'/(Q'\cup\{p\})}
 \left\langle\mathbf B_0(S)\ \middle|\ \boldsymbol\eta_0,\mu\right\rangle_d^{\bullet,\,\cC_0/(Q_0\cup\{p\})}.
\end{aligned}
\]
Observe that, when $\nu\in T$, the condition $\nu\in\Lambda_B(\mathbf B'(S))$ is empty, and when $\nu\in T'$, the condition $\nu\in\Lambda_B(\mathbf B_0(S))$ is empty. Consequently, we obtain
\[
\begin{aligned}
&\sum_{\substack{\lambda\in\Lambda_A(\mathbf B)\\\nu\in\Lambda_B(\mathbf B)}}
 \left\langle\mathbf B\setminus\{\lambda,\nu\}
 \ \middle|\ \boldsymbol\eta',\boldsymbol\eta_0\right\rangle_d^{\bullet,\,\cC/Q}
={}\sum_{\substack{\lambda\in\Lambda_A(\mathbf B)\\\nu\in\Lambda_B(\mathbf B)}}\sum_{\mu\vdash d}\mathfrak z(\mu)\Bigg[
\\[-2mm]
&\quad
 \sum_{\substack{S\subset\{1,\ldots,u\}\\\lambda\in\Lambda_A(\mathbf B'(S))\\\nu\in\Lambda_B(\mathbf B'(S))}}
 \left\langle\mathbf B'(S)\setminus\{\lambda,\nu\}
 \ \middle|\ \boldsymbol\eta',\mu\right\rangle_d^{\bullet,\,\cC'/(Q'\cup\{p\})}
 \left\langle\mathbf B_0(S)\ \middle|\ \boldsymbol\eta_0,\mu\right\rangle_d^{\bullet,\,\cC_0/(Q_0\cup\{p\})}
\\[-1mm]
&\quad+\sum_{\substack{S\subset\{1,\ldots,u\}\\\lambda\in\Lambda_A(\mathbf B_0(S))\\\nu\in\Lambda_B(\mathbf B_0(S))}}
 \left\langle\mathbf B'(S)\ \middle|\ \boldsymbol\eta',\mu\right\rangle_d^{\bullet,\,\cC'/(Q'\cup\{p\})}
 \left\langle\mathbf B_0(S)\setminus\{\lambda,\nu\}
 \ \middle|\ \boldsymbol\eta_0,\mu\right\rangle_d^{\bullet,\,\cC_0/(Q_0\cup\{p\})}\Bigg].
\end{aligned}
\]

For \(x^{i,a}_0x^{i,a^\vee}_0\), setting
\(A=\tau_0(\phi_{i,a})\) and \(B=\tau_0(\phi_{i,a^\vee}))\), 
we delete two markings $\lambda\in\Lambda_A(\mathbf B)$ and $\nu\in\Lambda_B(\mathbf B)$ in $\mathbf B$.
Applying \eqref{eq:relative-orbifold-degeneration}, if $p_i\in\cC'$, then
\[
\begin{aligned}
&\sum_{\substack{\lambda\in\Lambda_A(\mathbf B)\\\nu\in\Lambda_B(\mathbf B)}}\left\langle\mathbf B\setminus\{\lambda,\nu\}
 \ \middle|\ \boldsymbol\eta',\boldsymbol\eta_0\right\rangle_d^{\bullet,\,\cC/Q}
={}\sum_{\substack{\lambda\in\Lambda_A(\mathbf B)\\\nu\in\Lambda_B(\mathbf B)}}
 \sum_{\substack{S\subset\{1,\ldots,u\}\\\lambda\in\Lambda_A(\mathbf B'(S))\\\nu\in\Lambda_B(\mathbf B'(S))}}
 \sum_{\mu\vdash d}\mathfrak z(\mu)
\\[-1mm]
&\hspace*{20mm}\cdot
 \left\langle\mathbf B'(S)\setminus\{\lambda,\nu\}
 \ \middle|\ \boldsymbol\eta',\mu\right\rangle_d^{\bullet,\,\cC'/(Q'\cup\{p\})}
 \left\langle\mathbf B_0(S)\ \middle|\ \boldsymbol\eta_0,\mu\right\rangle_d^{\bullet,\,\cC_0/(Q_0\cup\{p\})},
\end{aligned}
\]
and if $p_i\in\cC_0$, then
\[
\begin{aligned}
&\sum_{\substack{\lambda\in\Lambda_A(\mathbf B)\\\nu\in\Lambda_B(\mathbf B)}}\left\langle\mathbf B\setminus\{\lambda,\nu\}
 \ \middle|\ \boldsymbol\eta',\boldsymbol\eta_0\right\rangle_d^{\bullet,\,\cC/Q}
={}\sum_{\substack{\lambda\in\Lambda_A(\mathbf B)\\\nu\in\Lambda_B(\mathbf B)}}
 \sum_{\substack{S\subset\{1,\ldots,u\}\\\lambda\in\Lambda_A(\mathbf B_0(S))\\\nu\in\Lambda_B(\mathbf B_0(S))}}
 \sum_{\mu\vdash d}\mathfrak z(\mu)
\\[-1mm]
&\hspace*{20mm}\cdot
 \left\langle\mathbf B'(S)
 \ \middle|\ \boldsymbol\eta',\mu\right\rangle_d^{\bullet,\,\cC'/(Q'\cup\{p\})}
 \left\langle\mathbf B_0(S)\setminus\{\lambda,\nu\}\ \middle|\ \boldsymbol\eta_0,\mu\right\rangle_d^{\bullet,\,\cC_0/(Q_0\cup\{p\})},
\end{aligned}
\]

The term \(s^j_0\bar s^j_0\) in \(L_{-1}\) is handled directly at the
level of coefficient extraction.  Since all odd
variables occur on \(\cC'\), it follows that \eqref{eq:relative-orbifold-degeneration} gives
\[
\begin{aligned}
&\mathbf B!\,[\mathbf t_{\mathbf B}]\,s^j_0\bar s^j_0
 Z_d^{\cC/Q}[\boldsymbol\eta]
\\
={}&\sum_{S\subset\{1,\ldots,u\}}\sum_{\mu\vdash d}\mathfrak z(\mu)
 \Bigl(\mathbf B'(S)!\,[\mathbf t_{\mathbf B'(S)}]\,s^j_0\bar s^j_0
 Z_d^{\cC'/(Q'\cup\{p\})}[\boldsymbol\eta',\mu]\Bigr)\cdot
 \left\langle\mathbf B_0(S)\ \middle|\ \boldsymbol\eta_0,\mu\right\rangle_d^{\bullet,\,\cC_0/(Q_0\cup\{p\})}.
\end{aligned}
\]
The relative order of the odd variables is unchanged in
\(\mathbf B'(S)\),
hence the sign is the same on both sides.

For $c_0(\cC/Q)$ in $L_0$, note that
\[
c_0(\cC/Q)
=\sum_{i=1}^{s}\frac{r_i^2-1}{24r_i}
=c_0\bigl(\cC'/(Q'\cup\{p\})\bigr)
+c_0\bigl(\cC_0/(Q_0\cup\{p\})\bigr).
\]
Now  \eqref{eq:relative-orbifold-degeneration} gives
\[
\begin{aligned}
&c_0(\cC/Q)
 \left\langle\mathbf B\ \middle|\ \boldsymbol\eta',\boldsymbol\eta_0
 \right\rangle_d^{\bullet,\,\cC/Q}
={}\sum_{S\subset\{1,\ldots,u\}}\sum_{\mu\vdash d}\mathfrak z(\mu)\Bigg[
\\[-2mm]
&\quad c_0\bigl(\cC'/(Q'\cup\{p\})\bigr)
 \left\langle\mathbf B'(S)\ \middle|\ \boldsymbol\eta',\mu\right\rangle_d^{\bullet,\,\cC'/(Q'\cup\{p\})}
 \left\langle\mathbf B_0(S)\ \middle|\ \boldsymbol\eta_0,\mu\right\rangle_d^{\bullet,\,\cC_0/(Q_0\cup\{p\})}
\\[-1mm]
&\quad+c_0\bigl(\cC_0/(Q_0\cup\{p\})\bigr)
 \left\langle\mathbf B'(S)\ \middle|\ \boldsymbol\eta',\mu\right\rangle_d^{\bullet,\,\cC'/(Q'\cup\{p\})}
 \left\langle\mathbf B_0(S)\ \middle|\ \boldsymbol\eta_0,\mu\right\rangle_d^{\bullet,\,\cC_0/(Q_0\cup\{p\})}\Bigg].
\end{aligned}
\]

Finally, combining the identities above for all operators in \(L_k\) gives
\eqref{eq:virasoro-defect-degeneration}. This completes the proof.
\end{proof}


\begin{prop}\label{prop:red2orbicap}
Assume that Theorem \ref{thm:main} holds for  \(\OrbCap_r\) for all \(r\). Then it holds for all pairs $(\cC,Q)$.
\end{prop}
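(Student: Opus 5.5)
We argue by induction on the number \(s\) of orbifold points of \(\cC\), using Proposition~\ref{prop:virasoro-defect-degeneration} at each step. The base case \(s=0\) is the relative Virasoro constraint for ordinary curves relative to ordinary points, proved by Okounkov--Pandharipande \cite{OP3}. Our operators specialize to theirs when \(s=0\). Odd classes \(\alpha_j,\beta_j\) are also included there, since \cite{OP3} treats descendants of odd classes for target curves of arbitrary genus. So for \(s=0\) every defect \(\Delta_{k,d}^{C/Q}(\mathbf B\mid\boldsymbol\eta)\) vanishes.

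For \(s\ge1\), choose an ordinary point \(p\) away from \(Q\) and the orbifold points. Consider the degeneration of \((\cC,Q)\) into
\[
(\cC',Q\cup\{p\})\cup_p(\cC_0,\{p\}),
\]
where \(\cC'\) has the same coarse curve and genus as \(\cC\) but carries only the orbifold points \(p_1,\ldots,p_{s-1}\), and \(\cC_0\cong\PP^1_{r_s,1}\) carries \(p_s\). Thus \((\cC_0,\{p\})=\OrbCap_{r_s}\), with \(Q'=Q\) and \(Q_0=\varnothing\). This is the standard deformation to the normal cone of the point \(p_s\) (or, equivalently, bubbling off a rational component containing \(p_s\)), performed on the coarse curve and then rooted along the section through \(p_s\). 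The genus of \(\cC_0\) is zero, so Proposition~\ref{prop:virasoro-defect-degeneration} applies. All odd insertions and the twisted insertions \(\phi_{i,a}\) with \(i<s\) go to \(\cC'\), while the \(\phi_{s,a}\) go to \(\cC_0\), matching the representative choices fixed at the start of the section.

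Given any \(k\ge-1\), any \(\mathbf B\) and any \(\boldsymbol\eta\), we write \(\mathbf B\) in the form required by the proposition, choosing an arbitrary splitting \(T\sqcup T'\) of the \(\omega\)-insertions (for instance \(T=\varnothing\)). Formula \eqref{eq:virasoro-defect-degeneration} expresses \(\Delta_{k,d}^{\cC/Q}(\mathbf B\mid\boldsymbol\eta)\) as a sum of terms, each containing a factor of the form
\[
\Delta_{k,d}^{\cC'/(Q\cup\{p\})}(\mathbf B'(S)\mid\boldsymbol\eta,\mu)
\quad\text{or}\quad
\Delta_{k,d}^{\OrbCap_{r_s}}(\mathbf B_0(S)\mid\mu).
\]
The first factor vanishes by the inductive hypothesis, because \(\cC'\) has \(s-1\) orbifold points. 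It is relative to the ordinary points \(Q\cup\{p\}\), which is allowed since the induction is run over all relative pairs. The second vanishes by the assumption on orbifold caps. Hence every defect of \((\cC,Q)\) vanishes, which by \eqref{formula-virasorodefect} is Theorem~\ref{thm:main} for \((\cC,Q)\).

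Two points need care in this step. The first is that the degeneration genuinely exists as a family of orbifold pairs to which Lemma~\ref{lem:relative-orbifold-degeneration} applies. Concretely, the root construction along \(p_s\) must commute with the degeneration, and the relative points \(Q\) must stay on \(\cC'\). The second is matching the bookkeeping: the indexing in Proposition~\ref{prop:virasoro-defect-degeneration} covers all insertion types, and its proof in particular already absorbed \(\chi_{\log}\), \(c_0\), and the twisted quadratic terms additively. The case \(d=0\) and unstable contributions are included in the disconnected potentials on both sides, so no separate treatment is needed.
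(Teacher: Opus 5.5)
Your proposal is correct and follows the paper's proof: induction on the number of orbifold points with \cite{OP3} as the base case, then splitting off one orbifold point into $\OrbCap_r$ by an ordinary-node degeneration $(\cC,Q)\rightsquigarrow(\cC',Q\cup\{p\})\cup_p\OrbCap_r$. Proposition~\ref{prop:virasoro-defect-degeneration} then writes each defect of $\cC/Q$ as a sum of terms, each containing a vanishing defect of $\cC'/(Q\cup\{p\})$ or of $\OrbCap_r$; your remarks on the existence of the family and on bookkeeping make explicit points the paper leaves implicit.
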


\begin{proof}
It suffices to prove \eqref{formula-virasorodefect}, and we argue by induction on the number of orbifold points on $\cC$.  When there are none, \eqref{formula-virasorodefect} for smooth curves hold by \cite{OP3}.  When there are at least one orbifold points,
we can separate off one orbifold point of order \(r\) by an ordinary-node degeneration
\[
(\cC,Q)
\rightsquigarrow
(\cC',Q\cup\{p\})
\cup_p
\OrbCap_r,
\]
where \(\cC'\) has one fewer orbifold point.  The Virasoro constraints hold
for \(\cC'/(Q\cup\{p\})\) by induction and for \(\OrbCap_r\) by assumption.
Proposition \ref{prop:virasoro-defect-degeneration} then implies \eqref{formula-virasorodefect}. This completes the induction.
\end{proof}

Proposition~\ref{prop:red2orbicap} reduces the proof of
Theorem~\ref{thm:main} to the Virasoro constraints for the orbifold caps
\(\OrbCap_r\), \(r\geq 1\).  It therefore remains only to establish these
cap constraints.  In the next two parts, we give two independent proofs:
the first uses the orbifold-cap operator formula, while the second proceeds
by degeneration from the absolute $\PP^1_{r,1}$ together with cap inversion.
Either proof, combined with Proposition~\ref{prop:red2orbicap}, completes the
proof of Theorem~\ref{thm:main}.

{\large \part{First orbifold cap proof: the operator-formula method}\label{part:operator-proof}}


\section{Infinite wedge formalism}\label{sec:wedge}

This section reviews the infinite wedge formalism used later.  The exposition follows the conventions of \cites{OP3,J,KW23}. We include the definitions needed for the orbifold cap proof.

\subsection{The infinite wedge space}
Let
\[
  V=\bigoplus_{k\in\Z+\frac{1}{2}}\C\,\underline{k}.
\]
The half-infinite wedge space \(\Lambda^{\frac{\infty}{2}}V\) has a basis indexed by subsets
\[
  S=\{s_1>s_2>\cdots\}\subset \Z+\frac{1}{2}
\]
which differ from the vacuum set \(\Z_{\le0}-\frac{1}{2}\) in finitely many elements.  The basis vector is
\[
  v_S=\underline{s_1}\wedge \underline{s_2}\wedge\cdots.
\]
The vacuum vector is
\[
 v_\emptyset= |0\rangle
  =\underline{-1/2}\wedge\underline{-3/2}\wedge\underline{-5/2}\wedge\cdots.
\]
We use the inner product for which the wedge basis is orthonormal, and write
\(\langle0|\) for the covector dual to \(|0\rangle\).  For any operator \(\mathcal O\) on the infinite wedge space, we set
\[
  \langle 0|\mathcal O
  :=\bigl(\mathcal O^*|0\rangle\bigr)^\vee
\]
where \(\mathcal O^*\) is the adjoint of \(\mathcal O\) and \((\cdot)^\vee\) denotes the covector induced by the inner product.
The vacuum expectation of \(\mathcal O\) is then
\[
  \langle\mathcal O\rangle
  :=\langle0|\mathcal O|0\rangle
  =(\mathcal Ov_{\emptyset},v_{\emptyset}).
\]
For \(k\in\Z+\frac{1}{2}\), let \(\Psi_k\) be exterior multiplication by \(\underline{k}\), and let \(\Psi_k^*\) be its adjoint.  Define normally ordered products by
\[
 :\Psi_i\Psi_j^*:
 =
 \begin{cases}
 \Psi_i\Psi_j^*,&j>0,\\
 -\Psi_j^*\Psi_i,&j<0.
 \end{cases}
\]
This gives the standard projective representation of \(\mathfrak{gl}(V)\) on \(\Lambda^{\frac{\infty}{2}}V\).  
For $i,j\in \mathbb{Z}+\frac{1}{2}$, let $E_{ij}$ denote the standard basis element in $\mathfrak{gl}(V)$. We then define a projective representation by the following assignment:
\begin{equation*}
    E_{ij} \mapsto \ :\Psi_i \Psi_j^*:
\end{equation*}

Let
\[
  S_0=\left\{-\frac12,-\frac32,-\frac52,\ldots\right\}.
\]
The charge of \(v_S\) is
\[
  c(S)=|S\setminus S_0|-|S_0\setminus S|.
\]
The charge-zero subspace has a basis \(\{v_\lambda\}\) indexed by partitions:
\[
  v_\lambda
  =
  \underline{\lambda_1-\frac12}
  \wedge\underline{\lambda_2-\frac32}
  \wedge\underline{\lambda_3-\frac52}\wedge\cdots.
\]
On this subspace, the energy operator
\[
  H=\sum_{k\in\Z+\frac12}kE_{kk}
\]
satisfies
\[
  Hv_\lambda=|\lambda|v_\lambda.
\]

\subsection{The \texorpdfstring{\(\mathcal E\)}{E}-operators}

For \(j\in\Z\), define
\begin{equation*}
  \Ecal_j(z)=
  \sum_{k\in\Z+1/2}e^{z(k-j/2)}E_{k-j,k}
  +\frac{\delta_{j,0}}{e^{z/2}-e^{-z/2}}.
\end{equation*}
Let
\[
  \vars(z)=e^{z/2}-e^{-z/2},
  \qquad
  \Scal(z)=\frac{\vars(z)}{z}.
\]
The basic commutator is
\begin{equation}\label{eq:E-commutator}
  [\Ecal_j(z),\Ecal_k(w)]
  =\vars(jw-kz)\Ecal_{j+k}(z+w).
\end{equation}
The energy commutator is
\[
  [H,\Ecal_j(z)]=-j\Ecal_j(z).
\]
The adjoint satisfies
\[ 
  \Ecal_j(z)^*=\Ecal_{-j}(z).
\]
For \(j\ne0\), put
\[
  \alpha_j=\Ecal_j(0).
\]
Then
\[
  [\alpha_i,\alpha_j]=i\delta_{i+j,0}.
\]

\subsection{Orbifold \texorpdfstring{\(A\)}{A}-operators}

For \(0\le a\le r-1\), the Johnson orbifold operators are defined as follows.  If \(a=0\),
\begin{equation*}
  \boldsymbol A_{0/r}(z,u,t)
  =\frac1u\Scal(ruz)^{tz/r}
  \sum_{m\in\Z}
  \frac{(tz\Scal(ruz))^m}{(1+tz/r)_m}
  \Ecal_{rm}(uz).
\end{equation*}
If \(1\le a\le r-1\),
\begin{equation*}
  \boldsymbol A_{a/r}(z,u,t)
  =\frac{t^{a/r}}u\frac{z}{tz+a}
  \Scal(ruz)^{(tz+a)/r}
  \sum_{m\in\Z}
  \frac{(tz\Scal(ruz))^m}{(1+(tz+a)/r)_m}
  \Ecal_{rm+a}(uz).
\end{equation*}
The adjoint operator \(\boldsymbol A_{a/r}^*\) is obtained by taking the adjoint and replacing \(t\) by \(-t\).  We write \(\boldsymbol A_{a/r}[k]\) for the coefficient of \(z^{k+1}\).

\section{The orbifold cap operator formula}\label{sec:cap-formula}
We now derive the non-equivariant operator formula for the orbifold cap.

\subsection{Equivariant orbifold cap formula}
We first recall the orbifold formula needed in the cap
calculation.  Let \(\PP^1_{r,s}\) be the root stack of orders \(r\) and
\(s\) at \(0\) and \(\infty\). Let \(T=\C^*\) acts on the coarse
\(\PP^1\) with tangent weights \(1\) at \(0\) and \(-1\) at \(\infty\),
and lift this action to \(\PP^1_{r,s}\).  Write
\(\mathbf 0_{i/r}\) and \(\boldsymbol\infty_{j/s}\) for the corresponding equivariant Chen-Ruan classes
with fixed age. Following \cite{J}, the full $\C^*$-equivariant orbifold Gromov--Witten potential of \(\PP^1_{r,s}\) is denoted as
\begin{align*}
 \tau_{r,s}(\mathbf x,\mathbf x^*)
 \coloneqq&\sum_{g\in\Z}\sum_{d\geq0}u^{2g-2}q^d
 \left\langle\exp\left(
 \sum_{\substack{k\geq0\\0\leq i<r}}
 x_k(i)\tau_k(\mathbf 0_{i/r})
 +\sum_{\substack{k\geq0\\0\leq j<s}}
 x_k^*(j)\tau_k(\boldsymbol\infty_{j/s})
 \right)\right\rangle^{\bullet,T}_{g,d}
\end{align*}
where descendants are formed with the coarse
cotangent classes \(\bar\psi\) and  \(2g-2\) denotes minus the Euler characteristic of a possibly
disconnected domain, Johnson's operator formula \cite[Theorem B]{J} reads
\begin{equation}\label{eq:Johnson-formula}
\tau_{r,s}(\mathbf x,\mathbf x^*)=
\left\langle
 e^{\sum_{k,i}x_k(i)\boldsymbol A_{i/r}[k]}
 e^{t\alpha_r/(ur)}
 \left(\frac{q}{t^{1/r}(-t)^{1/s}}\right)^H
 e^{-t\alpha_{-s}/(us)}
 e^{\sum_{k,j}x_k^*(j)\boldsymbol A_{j/s}^*[k]}
\right\rangle .
\end{equation}
Here the
last \(A\)-operators are defined by replacing \(r\) with \(s\) and setting
\(\boldsymbol A_{j/s}^*(z,u,t)=\boldsymbol A_{j/s}(z,u,-t)^*\).

For $s=1$, we recover the target \(\PP^1_{r,1}\) for the orbifold cap. We further set
\[[0]=[0]_{0/r},\qquad [\infty]=[\infty]_{0/1}\]
to be the equivariant point classes associated to $[0]$ and $[\infty]$ respectively.

Fix twisted sectors
\(\mathbf a=(a_1,\ldots,a_p)\), \(1\leq a_j\leq r-1\), and a relative
condition \(\nu=(\nu_1,\ldots,\nu_{\ell(\nu)})\) at \(\infty\). Let
\(\mathbf y=(y_1,\ldots,y_p)\),
\(\mathbf z=(z_1,\ldots,z_n)\), and
\(\mathbf w=(w_1,\ldots,w_m)\).  We define 
\begin{align}
G^{T,\mathrm{st}}_{\mathbf a,\nu}(\mathbf y,\mathbf z,\mathbf w)
\coloneqq &
\sum_{\substack{k_1,\ldots,k_p\geq0\\
                l_1,\ldots,l_n\geq0\\
                q_1,\ldots,q_m\geq0}}
\left\langle
 \prod_{j=1}^{p}\tau_{k_j}([0]_{a_j/r})
 \prod_{i=1}^{n}\tau_{l_i}([0])
 \prod_{h=1}^{m}\tau_{q_h}([\infty])
 \ \middle|\ \nu
\right\rangle^{\bullet,T,\OrbCap_r}
\nonumber\\[-2mm]
&\hspace{14mm}\cdot
 \prod_{j=1}^{p}y_j^{k_j+1}
 \prod_{i=1}^{n}z_i^{l_i+1}
 \prod_{h=1}^{m}w_h^{q_h+1}
\label{eq:eq-cap-potential}
\end{align}
to be the generating series of the $\C^*$-equivariant Gromov-Witten of $\OrbCap_r$ with fixed twisted sectors $\mathbf a$ and relative condition $\nu$. Its degree is
\(|\nu|=\sum_{i=1}^{\ell(\nu)}\nu_i\). The genus
variable is suppressed because it can be recovered from the homogeneous grading.
The superscript ``\({\rm st}\)'' emphasizes that  unstable terms are not contained in $G^{T,\mathrm{st}}_{\mathbf a,\nu}$.

To state the operator formula for the orbifold cap, we need the following regularized operators. We set
\[
  D=t^{-H/r}.
\]
Since \([H,\Ecal_{rm+a}]=-(rm+a)\Ecal_{rm+a}\),
\[
  D^{-1}\Ecal_{rm+a}(z)D=t^{-m-a/r}\Ecal_{rm+a}(z).
\]
Define
\[
  \widehat A_a(z,t)=D^{-1}\boldsymbol A_{a/r}(z,1,t)D.
\]
Then
\begin{align}
\widehat A_0(z,t)
&=\Scal(rz)^{tz/r}
\sum_{m\in\Z}\frac{(z\Scal(rz))^m}{(1+tz/r)_m}\Ecal_{rm}(z),
\label{eq:A0hat}\\
\widehat A_a(z,t)
&=\frac{z}{tz+a}\Scal(rz)^{(tz+a)/r}
\sum_{m\in\Z}\frac{(z\Scal(rz))^m}{(1+(tz+a)/r)_m}\Ecal_{rm+a}(z)
\label{eq:Aahat}
\end{align}
for \(1\le a\le r-1\).  Also
\begin{equation}\label{eq:propagation-factor}
  D^{-1}e^{t\alpha_r/r}D=e^{\alpha_r/r}.
\end{equation}
Thus it becomes independent of \(t\) after regularization.

\smallskip
\noindent\emph{Johnson coefficient commutator.}
Put
\[
  \widehat A_{a,p}(t)=[z^{p+1}]\widehat A_a(z,t).
\]
By \cite[Lemma 8]{J}, we have
\begin{equation}\label{eq:johnson-commutator-support}
[\widehat A_{a,p}(t),\widehat A_{b,q}(t)]
=
(-1)^p\delta_{p+q,-1}
\begin{cases}
  t\,\mathrm{Id}, & a=b=0,\\[2mm]
  r^{-1}\mathrm{Id},
    & 1\leq a,b\leq r-1 \text{ and } a+b=r,\\[2mm]
  0, & \text{otherwise}.
\end{cases}
\end{equation}

For \(\mathbf w=(w_1,\ldots,w_m)\), let
\[
  E(\mathbf w,t)
  =
  \sum_{\pi\in\Part([m])}t^{m-\ell(\pi)}
  \prod_{B\in\pi}T_B\,\Ecal_0(W_B),
\]
where \(W_B=\sum_{i\in B}w_i\) and
\begin{equation*}
T_B=\left(\prod_{i\in B}w_i\right)W_B^{|B|-2}    
\end{equation*}
Here \(\Part([m])\) is the set of partitions of
\([m]=\{1,\ldots,m\}\). We set
\(E(\varnothing,t)=1\). Since the \(\Ecal_0\)-factors commute, the block
product is unordered. 

The normalized relative vector is
\begin{equation}\label{eq:relative-ket}
 |\nu\rangle
 =\frac{1}{\mathfrak z(\nu)}
  \prod_{j=1}^{\ell(\nu)}\alpha_{-\nu_j}|0\rangle,
 \qquad
 \mathfrak z(\nu)=|\Aut(\nu)|\prod_{j=1}^{\ell(\nu)}\nu_j
\end{equation}
If relative markings are ordered, the factor
\(|\Aut(\nu)|\) is omitted. We abbreviate
\[
  \left\langle\mathcal O\middle|\nu\right\rangle
  :=
  \langle0|\mathcal O|\nu\rangle.
\]

\begin{thm}[Stable equivariant orbifold cap formula]\label{thm:eq-cap}
\begin{equation}\label{eq:eq-cap-formula}
  G^{T,\mathrm{st}}_{\mathbf a,\nu}(\mathbf y,\mathbf z,\mathbf w)
  =
  \left[
    \left\langle
      \overrightarrow{\prod_j}\widehat A_{a_j}(y_j,t)
      \overrightarrow{\prod_i}\widehat A_0(z_i,t)
      e^{\alpha_r/r}
      E(\mathbf w,-t)
    \middle|\nu\right\rangle
  \right]_{+}.
\end{equation}
The arrows indicate increasing-label order. 
\end{thm}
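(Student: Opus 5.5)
The plan is to derive \eqref{eq:eq-cap-formula} from Johnson's formula \eqref{eq:Johnson-formula} for $\PP^1_{r,s}$. We pass from the orbifold point of order $s$ at $\infty$ to a relative point using the orbifold--relative correspondence \cites{TY,FWY,FWY2}, following the large-root analysis of \cite{KW23}.

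\textbf{Step 1: insertions at $0$ and regularization.} Specialize \eqref{eq:Johnson-formula} so that only the insertions $\tau_{k_j}(\mathbf 0_{a_j/r})$ and $\tau_{l_i}(\mathbf 0_{0/r})$ are placed at $0$. Set $u=1$; the genus is recovered from homogeneity. Then conjugate the whole vacuum expectation by $D=t^{-H/r}$. Since $D|0\rangle=|0\rangle$ and $\langle 0|D^{-1}=\langle 0|$, this is legitimate. It turns each $\boldsymbol A_{a/r}$ into $\widehat A_a$. By \eqref{eq:propagation-factor} it also turns $e^{t\alpha_r/r}$ into $e^{\alpha_r/r}$. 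The remaining $t$-powers combine with $q^H$, and we extract the coefficient of $q^{|\nu|}$. The classes $[0]_{a/r}$ and $[0]$ differ from $\mathbf 0_{a/r}$ by explicit equivariant normalizations, which I will track at this point.

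\textbf{Step 2: from $\PP^1_{r,s}$ to the relative cap.} The right half of the expectation is
\[
(-t)^{-H/s}e^{-t\alpha_{-s}/s}e^{\sum x^*_k(j)\boldsymbol A^*_{j/s}[k]}|0\rangle .
\]
Following \cite{KW23}, I will expand it for large $s$, keeping only those insertions at $\infty$ that realize the relative profile $\nu$. By the orbifold--relative correspondence, the relative condition $\nu$ corresponds to twisted insertions $\tau_0(\boldsymbol\infty_{\nu_i/s})$, weighted by $s^{\ell(\nu)}$. Their operators are essentially $\boldsymbol A^*_{\nu_i/s}[0]\sim \tfrac{1}{\nu_i}\alpha_{-\nu_i}$ plus corrections of order $O(1/s)$. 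Taking the $s$-constant term should produce $\prod_j\alpha_{-\nu_j}|0\rangle/\mathfrak z(\nu)=|\nu\rangle$. It should also kill the factor $e^{-t\alpha_{-s}/s}$, since $\alpha_{-s}$ raises the energy by $s\to\infty$ beyond the fixed degree $|\nu|$.

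\textbf{Step 3: untwisted insertions at $\infty$.} The insertions $\tau_{q_h}([\infty])$ come from $\boldsymbol A^*_{0/s}$ with $t\to -t$. In the large-$s$ limit, the terms with $m\neq0$ in $\boldsymbol A^*_{0/s}$ involve $\Ecal_{\pm sm}$, which change energy by multiples of $s$. These terms therefore drop out, leaving only $\Ecal_0$-contributions. When several such operators act on the relative vector, the $\Ecal_0$'s commute, but the $\Scal(sz)^{-tz/s}$ prefactors and the diagonal (non-normal-ordered) parts produce cross terms. I expect that organizing these by set partitions of $[m]$ yields exactly $E(\mathbf w,-t)$, with the block weights $T_B\,t^{|B|-1}$ coming from the expansion of $\Scal(sW_B)^{-tW_B/s}$-type factors as $s\to\infty$. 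The sign of $t$ is flipped by the adjoint convention.

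\textbf{Step 4: removing unstable contributions.} Johnson's potential also contains unstable terms: degree zero with few markings, and the two-point genus-zero terms visible through \eqref{eq:johnson-commutator-support}. These contribute monomials of nonpositive total degree in $\mathbf y,\mathbf z,\mathbf w$. The bracket $[\,\cdot\,]_+$ is exactly what discards them.

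The main obstacle will be Step 3 together with the uniformity of Step 2. One must show that the large-$s$ constant term exists and commutes with coefficient extraction. One must also verify that the combinatorics of the $s\to\infty$ limit of products of $\boldsymbol A^*_{0/s}$ reproduces the partition sum $E(\mathbf w,-t)$ exactly. I would first check the cases $m=1,2$ by hand, and then argue by induction on $m$ using \eqref{eq:E-commutator}.
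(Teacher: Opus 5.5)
Your outline follows the paper's route: Johnson's formula for $\PP^1_{r,s}$, moving $D=t^{-H/r}$ to the left vacuum, the orbifold--relative correspondence at $\infty$, the constant term in $s$, and then $[\cdot]_+$. Your remark that $e^{-t\alpha_{-s}/s}$ contributes only $1$ is also correct, since the energy at the $q^H$ slot is $|\nu|<s$.

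\textbf{The gap is in Step~3.} Energy conservation fixes only the energy at the $q^H$ slot. It says nothing about the intermediate energies between the factors $\boldsymbol A^*_{0/s}(w_h)$. So the summands with $m\neq0$ do not drop out: one factor can contribute $\Ecal_{-sk}$, raising the energy by $sk$, and a factor to its left can contribute $\Ecal_{sk}$, lowering it back. These summands are exactly what builds the blocks with $|B|\geq2$.

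A small example shows this. Take $\nu=\varnothing$ and two $\infty$-insertions. The expectation factors through $|0\rangle\langle0|$ at the $q^H$ slot, so you need $\langle\boldsymbol A^*_{0/s}(w_1)\boldsymbol A^*_{0/s}(w_2)\rangle$.
\begin{itemize}
\item The summand $m_1=m_2=0$ has constant term exactly $\langle\Ecal_0(w_1)\Ecal_0(w_2)\rangle$. This is because every $t$-dependent term of $\Scal(sw)^{-tw/s}$ carries a positive power of $s$.
\item The summand $(m_1,m_2)=(-k,k)$, $k\geq1$, is its prefactor times $\langle\Ecal_{sk}(w_1)\Ecal_{-sk}(w_2)\rangle=\vars(sk(w_1+w_2))/\vars(w_1+w_2)$. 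Its $t$-linear $s^0$-part is $t\,w_1(-w_2/w_1)^k(w_1+w_2)/\vars(w_1+w_2)$.
\item Summing over $k\geq1$ (expanded in $w_2/w_1$) gives $-tw_1w_2/\vars(w_1+w_2)=(-t)\,T_{\{1,2\}}\langle\Ecal_0(w_1+w_2)\rangle$. This is precisely the two-element block of $E(\mathbf w,-t)$.
\end{itemize}

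In general the fused operators $\Ecal_0(W_B)$ arise only through the commutators $[\Ecal_{sk}(x),\Ecal_{-sk}(y)]=\vars(sk(x+y))\Ecal_0(x+y)$. Scalar prefactors such as $\Scal(sW_B)^{-tW_B/s}$, or ``diagonal parts'' of the mutually commuting $\Ecal_0(w_h)$, cannot produce them. So the mechanism you propose cannot yield $E(\mathbf w,-t)$, and the hand check of $m=2$ you planned would already expose this.

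What is missing is the input the paper imports from \cite{KW23}. First, the structural decomposition of the untwisted $\infty$-markings into connected blocks through these $\Ecal_{\pm sk}$-interactions. Second, the Hodge-integral identification of the scalar attached to each block \cite[Lemmas 4.2 and 4.4]{KW23}, which gives $(-t)^{|B|-1}T_B\Ecal_0(W_B)$. For $|B|\geq2$ the weight $T_B=\prod_{i\in B}w_i\int_{\bM_{0,|B|+1}}\prod_{i\in B}(1-w_i\psi_i)^{-1}$ is a genus-zero $\psi$-integral. It is not something you can see in the prefactors.

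A few smaller points also need repair.
\begin{itemize}
\item You never cancel the leftover factor $(-t)^{-|\nu|/s}$ from the degree operator. It cancels exactly against the factors $(-t)^{\nu_i/s}$ in the leading terms $\frac{(-t)^{\nu_i/s}}{\nu_i}\alpha_{-\nu_i}$ of $\boldsymbol A^*_{\nu_i/s}[0]$. This exact cancellation, not an ``$O(1/s)$'' estimate, is what makes the $[s^0]$-extraction meaningful, since these factors are not polynomial in $s$.
\item Weighting the relative insertions by $s^{\ell(\nu)}$ contradicts your claim that the constant term of $\prod_i\boldsymbol A^*_{\nu_i/s}[0]|0\rangle$ is its leading term $\prod_i\alpha_{-\nu_i}|0\rangle/\mathfrak z(\nu)$. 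With that weight, $[s^0]$ would pick out the $s^{-\ell(\nu)}$-part instead. The paper takes the constant term of the unweighted invariant.
\item In Step~4 the criterion is per variable, not total degree. For example, an unstable two-point term $y^{a+1}z^{b+1}$ with $a+b=-1$ has total degree $1$; $[\cdot]_+$ removes it because one exponent is nonpositive.
\end{itemize}
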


\medskip
\noindent\textbf{Positive-part convention.}
Let \(F(\mathbf y,\mathbf z,\mathbf w;t)\) be a Laurent series in
\(\mathbf y,\mathbf z,\mathbf w\), whose coefficients may depend on the
equivariant parameter \(t\).  We define
\begin{equation*}
[F]_+
:=
\sum_{\substack{
\boldsymbol\alpha\in\mathbb Z_{>0}^{p},\;
\boldsymbol\beta\in\mathbb Z_{>0}^{n},\;
\boldsymbol\gamma\in\mathbb Z_{>0}^{m}}}
([\mathbf y^{\boldsymbol\alpha}
 \mathbf z^{\boldsymbol\beta}
 \mathbf w^{\boldsymbol\gamma}]F)\,
\mathbf y^{\boldsymbol\alpha}
\mathbf z^{\boldsymbol\beta}
\mathbf w^{\boldsymbol\gamma},
\end{equation*}
where
\(
[\mathbf y^{\boldsymbol\alpha}
 \mathbf z^{\boldsymbol\beta}
 \mathbf w^{\boldsymbol\gamma}]F
\)
denotes the coefficient of the indicated monomial in \(F\). If one of the lists of variables is empty, the corresponding positivity
condition is omitted. Thus \([F]_+\) is the projection onto monomials whose exponent is strictly positive in every variable, while no positivity condition is imposed on the equivariant parameter \(t\).

\begin{proof}[Proof of Theorem \ref{thm:eq-cap}]
The proof follows \cite[Section 4.2.1]{KW23}, especially Theorem~4.5 and its
$\infty$-relative version in Remark~4.6. The difference is that the root
order at $0$ is generalized from $r=1$ to an arbitrary fixed positive
integer $r$, while only the root order $s$ at $\infty$ tends to infinity.
We apply \eqref{eq:Johnson-formula}, rather than a separate localization
calculation.  Add an \(s\)-th root at \(\infty\), and represent a relative
part \(\nu_i\) by a marking in the sector of age \(\nu_i/s\) on
\(\PP^1_{r,s}\).  For fixed \(r\) and sufficiently large \(s\), the
resulting equivariant orbifold invariant is polynomial in \(s\), and its
constant term is the corresponding invariant of
\((\PP^1_{r,1},\infty)=\OrbCap_r\); this is the higher-genus
orbifold--relative correspondence at \(\infty\)
\cites{TY,FWY,FWY2}.  Consequently, a Laurent series whose positive
part is $G^{T,\mathrm{st}}_{\mathbf a,\nu}$ is obtained by selecting from Johnson's
formula \eqref{eq:Johnson-formula} the insertions
\[
 \boldsymbol A_{a_j/r}(y_j,1,t),\qquad
 \boldsymbol A_{0/r}(z_i,1,t),\qquad
 \boldsymbol A^*_{0/s}(w_h,1,t),\qquad
 \boldsymbol A^*_{\nu_i/s}[0],
\]
taking the coefficient of \(q^{|\nu|}\), and then extracting \([s^0]\).

The degree factor in
\eqref{eq:Johnson-formula} contains \(D=t^{-H/r}\). We set
\[
  L
  =
  \overrightarrow{\prod_j}\boldsymbol A_{a_j/r}(y_j,1,t)
  \overrightarrow{\prod_i}\boldsymbol A_{0/r}(z_i,1,t).
\]
Then
\[
 L e^{t\alpha_r/r}D
 =D\,(D^{-1}LD)\,(D^{-1}e^{t\alpha_r/r}D).
\]
Since $\langle0|D=\langle 0|$, conjugation gives
\[
  D^{-1}LD
  =
  \overrightarrow{\prod_j}\widehat A_{a_j}(y_j,t)
  \overrightarrow{\prod_i}\widehat A_0(z_i,t),
\]
and
\eqref{eq:propagation-factor} gives \(e^{\alpha_r/r}\). On the
coefficient of \(q^{|\nu|}\), the remaining degree factor is
\((-t)^{-|\nu|/s}\).  The selected relative-sector operators $\boldsymbol A^*_{\nu_i/s}[0]$ contribute
\[\prod_i(-t)^{\nu_i/s}=(-t)^{|\nu|/s}.\] 
So these factors cancel exactly
before the large-\(s\) coefficient is taken.

We next use the large-root analysis of \cite{KW23}.  Their structural
decomposition partitions the untwisted \(\infty\)-markings into connected
blocks, and their Hodge-integral formula identifies the scalar attached to
each block \cite[Lemmas 4.2 and 4.4]{KW23}. Hence the constant term of a block \(B\) is
\begin{equation}\label{eq:s-block-constant}
 (-t)^{|B|-1}
 \left(\prod_{i\in B}w_i\right)W_B^{|B|-2}\Ecal_0(W_B)
 =(-t)^{|B|-1}T_B\Ecal_0(W_B).
\end{equation}
Multiplying
\eqref{eq:s-block-constant} over the blocks and summing over set partitions
produces \(E(\mathbf w,-t)\).

Finally, after the preceding fractional-power cancellation and the large-$s$ analysis, the product \(\overrightarrow\prod_{i}\boldsymbol A^*_{\nu_i/s}[0]\) gives \(\prod_i\alpha_{-\nu_i}/\nu_i\). There is an additional factor \(|\Aut(\nu)|^{-1}\) due to the fact that our relative markings are unordered. So their product is
\[
 \frac{1}{\mathfrak z(\nu)}
 \prod_i\alpha_{-\nu_i},
\]
namely the relative vector \eqref{eq:relative-ket}. Substitution into
\eqref{eq:Johnson-formula} gives
\[
  \left\langle
  \overrightarrow{\prod_j}\widehat A_{a_j}(y_j,t)
  \overrightarrow{\prod_i}\widehat A_0(z_i,t)
  e^{\alpha_r/r}
  E(\mathbf w,-t)
  \middle|\nu\right\rangle.
\]
By the unstable convention of
\cite{J}, applying
\([\cdot]_{+}\) extracts precisely
the stable invariants in \eqref{eq:eq-cap-potential}, proving
\eqref{eq:eq-cap-formula}.

\end{proof}

\subsection{The non-equivariant limit}

In equivariant cohomology,
\[
 \id=\frac{[0]-[\infty]}{t}.
\]
Thus \(m\) insertions of \(\id\) are obtained by extracting the coefficient
of \(t^m\) from an inclusion--exclusion sum.  For \(S\subseteq[m]\), let
\(\mathbf w_S=(w_i)_{i\in S}\) in increasing order.  Define
\[
  M_r(\mathbf w;t)
  =
  \sum_{S\subseteq[m]}(-1)^{|S|}
  \left(\overrightarrow{\prod_{i\notin S}}\widehat A_0(w_i,t)\right)
  e^{\alpha_r/r}E(\mathbf w_S,-t).
\]
The arrow denotes increasing-label order.

For a nonempty block \(B\subseteq[m]\), put
\[
  b(B)=\max B,
  \qquad
  W_B=\sum_{i\in B}w_i,
\]
and define
\[
  A_B^{\vee,r}
  \coloneqq
  T_B\left[
    \frac{w_{b(B)}}{W_B}U(W_B)
    +
    \frac{w_{b(B)}}{r}
    \log\frac{W_B}{w_{b(B)}}P(W_B)
  \right],
\]
where
\[
  P(z)=[t^0]\widehat A_0(z,t),
  \qquad
  U(z)=[t^1]\widehat A_0(z,t).
\]

\begin{prop}\label{prop:M-redu}
The operator \(M_r\) satisfies
\begin{enumerate}[(i)]
\item \([t^q]M_r(\mathbf w;t)=0\) for \(q<m\);
\item
\[
  [t^m]M_r(\mathbf w;t)
  =
  \sum_{\pi\in\Part([m])}
  \overrightarrow{\prod_{B\in\pi}}A_B^{\vee,r}\,e^{\alpha_r/r}.
\]
\end{enumerate}
In the ordered product, the blocks are arranged by increasing maximal label.
\end{prop}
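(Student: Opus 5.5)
The plan is to make both assertions algebraic statements about the three building blocks $P$, $U$ and $\Ecal_0$ by moving every $E(\mathbf w_S,-t)$ to the left of $e^{\alpha_r/r}$. I expect a conjugation identity to turn each block $\Ecal_0(W_B)$ into exactly the $t^0$ part of $\widehat A_0(W_B,t)$. With that in hand, (i) and (ii) become an inclusion--exclusion in which each untwisted insertion supplies one factor of $t$.

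\textbf{Step 1 (conjugation identity).} From \eqref{eq:E-commutator}, $[\alpha_r,\Ecal_{rk}(z)]=\vars(rz)\Ecal_{r(k+1)}(z)$, since $\alpha_r=\Ecal_r(0)$. Expanding $\mathrm{Ad}_{e^{\alpha_r/r}}$ and using $\vars(rz)/r=z\Scal(rz)$ should give
\[
e^{\alpha_r/r}\Ecal_0(z)e^{-\alpha_r/r}=\sum_{k\ge0}\frac{(z\Scal(rz))^k}{k!}\Ecal_{rk}(z)=P(z).
\]
The last equality uses \eqref{eq:A0hat} at $t=0$: there $\Scal^{tz/r}=1$, $(1)_k=k!$, and $1/(1)_m=0$ for $m<0$ by the Pochhammer convention. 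The constant term $1/\vars(z)$ of $\Ecal_0$ is carried along unchanged. Since the $\Ecal_0(W_B)$ commute with one another, this yields
\[
e^{\alpha_r/r}E(\mathbf w_S,-t)=\sum_{\pi\in\Part(S)}(-t)^{|S|-\ell(\pi)}\prod_{B\in\pi}T_B\,P(W_B)\;e^{\alpha_r/r}.
\]
Consequently every summand of $M_r$ has the form (word in $\widehat A_0(w_i,t)$, $i\notin S$) $\cdot$ (product of $T_BP(W_B)$) $\cdot\, e^{\alpha_r/r}$.

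\textbf{Step 2 (induction on $m$ by splitting off the largest label).} Write $\widehat A_0(w,t)=P(w)+tU(w)+t^2R(w,t)$, and split $M_r(\mathbf w;t)$ according to whether $m\in S$. If $m\notin S$, the factor $\widehat A_0(w_m,t)$ is rightmost among the $\widehat A_0$'s. If $m\in S$, then $m$ lies in a block $B$ with $b(B)=m$. Grouping the $S\ni m$ terms by this block, I would try to prove
\[
M_r(\mathbf w;t)=\sum_{B\ni m}(\text{stuff from }B)\cdot M_r(\mathbf w_{[m]\setminus B};t),
\]
after commuting the block operator to the correct position.

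\textbf{Step 3 (commutators).} This is where noncommutativity enters, because $P(W_B)$ must be commuted past the $\widehat A_0(w_i,t)$ with $i\notin S$, $i<m$. Two inputs control the commutators. The first is Johnson's commutator \eqref{eq:johnson-commutator-support}: for $a=b=0$ the coefficient commutators are central and of order $t$. The second is the explicit form $[\Ecal_{rk}(z),\Ecal_{rl}(w)]=\vars(r(kw-lz))\Ecal_{r(k+l)}(z+w)$ for the $t^0$ parts. With these, the claim is that all contributions of $t$-order below $|B|$ cancel between the $m\notin S$ and $m\in S$ terms.

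This cancellation is a two-point string-type identity. Let $c=\widehat A_0(w_m,t)-P(w_m)$, which is of order $t$. The identity should state that for a singleton block $c\,e^{\alpha_r/r}$ equals $tU(w_m)e^{\alpha_r/r}$ up to higher order. For larger blocks, the $(-t)^{|B|-1}T_B$ weights are exactly what cancels the commutators of order $t^{|B|-1}$. Those commutators arise from merging the $w_i$, $i\in B$, into $W_B$.

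The surviving $t^{|B|}$ coefficient for the block is $T_B\bigl[\tfrac{w_m}{W_B}U(W_B)+\tfrac{w_m}{r}\log\tfrac{W_B}{w_m}P(W_B)\bigr]=A^{\vee,r}_B$. The log term should come from the $t$-derivative of $\Scal(rz)^{tz/r}$ and of $1/(1+tz/r)_k$. Evaluated at $W_B$ versus $w_m$, these differ by $\tfrac{w_m}{r}\log(W_B/w_m)$ after rewriting with the $\Scal$-powers.

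Once the single-block statement is established, induction gives (i) and the ordered product in (ii). Each block appears to the left of those with larger maximal label, which explains the ordering by increasing maximal label.

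\textbf{Main obstacle.} The hard part is Step 3 for a general block $B$: proving exact cancellation of all lower $t$-orders and identifying the $t^{|B|}$ coefficient as $A_B^{\vee,r}$, log term included. I would first verify $m=1,2$ by hand to pin down signs and the precise $\log$ normalization. I would then look for a generating-function identity: a closed form for $\widehat A_0(w_1,t)\cdots\widehat A_0(w_k,t)e^{\alpha_r/r}$ modulo $t^{k+1}$, obtained from $\Ecal$-commutators, which would make the cancellation a Möbius-inversion statement on $\Part([m])$.
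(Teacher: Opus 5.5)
Your Step 1 is correct; it is the paper's conjugation identity $e^{-\alpha_r/r}P(w)e^{\alpha_r/r}=\Ecal_0(w)$. Splitting off the largest label is also a sound way to organize the induction. But the proposal stops where the proof begins: Step 3, which you flag as the main obstacle, is only asserted, and the inputs you cite cannot carry it.

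Johnson's commutator \eqref{eq:johnson-commutator-support} concerns two $\widehat A_0$'s at the \emph{same} $t$. At order $t^1$ it only controls the combination $[P(x),U(y)]+[U(x),P(y)]$. What you actually need is to move the $t^0$-parts $P(W_C)=\widehat A_0(W_C,0)$ past the full $\widehat A_0(w_m,t)$, and that commutator is an operator, not a scalar. The missing ingredient is the closed formula of Lemma~\ref{lem:P-comm-two-param} and its iterate, Corollary~\ref{cor:multi-P-two-param-comm}: $[P(x),\widehat{\mathcal A}(b,y)]=\vars(rbx)(1+x/y)^b\widehat{\mathcal A}(b,x+y)$. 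It follows from the raw $\Ecal$-commutator only after a binomial resummation.

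The key feature of that formula is that the merged operator keeps the parameter $b=tw_m/r$ of the absorbing label, while its argument becomes $W_B$. This produces both parts of $A_B^{\vee,r}$: $[t^1]\widehat{\mathcal A}(tw_m/r,W_B)=\frac{w_m}{W_B}U(W_B)$, and the logarithm comes from the prefactor $(W_B/w_m)^{tw_m/r}$. Your proposed mechanism does not yield $\log(W_B/w_m)$. Comparing $t$-derivatives of $\Scal(rz)^{tz/r}$ and $1/(1+tz/r)_k$ at $W_B$ and at $w_m$ only gives the $\log\Scal(rz)$ and harmonic-number terms that make up $U$ itself.

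Step 2 also has the order backwards. The block containing $m$ has the largest maximal label, so in (ii) it is rightmost, next to $e^{\alpha_r/r}$. Since the $A_B^{\vee,r}$ do not commute, the recursion must be $N(\mathbf w)=\sum_{B\ni m}N(\mathbf w_{[m]\setminus B})X_B$ with $N:=M_re^{-\alpha_r/r}$; the block factor cannot sit on the left.

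With Corollary~\ref{cor:multi-P-two-param-comm}, your plan does close. For $m\notin S$, push $\widehat A_0(w_m,t)$ through the commuting product of the $P(W_C)$ and regroup by the set of absorbed labels; the terms with $m\in S$ supply the subtraction. This gives
\[
X_B=\sum_{\sigma\in\Part(B\setminus\{m\})}t^{|B|-1-\ell(\sigma)}\prod_{C\in\sigma}T_C\,\vars(tw_mW_C)\Bigl(\frac{W_B}{w_m}\Bigr)^{tw_m/r}\widehat{\mathcal A}(tw_m/r,W_B)-t^{|B|-1}T_BP(W_B).
\]
Since $\vars(x)=x+O(x^3)$ and $\sum_{\sigma}\prod_{C\in\sigma}w_mW_CT_C=T_B$ by the weighted Cayley formula, the order $t^{|B|-1}$ cancels and $X_B=t^{|B|}A_B^{\vee,r}+O(t^{|B|+1})$. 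Induction then gives (i) and (ii), with the blocks in the correct order.

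This route is arguably cleaner than the paper's interaction-diagram sketch, which rests on the same lemma and conjugation identity. As written, however, your proposal contains neither the lemma nor this computation, so its central step is missing.
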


\begin{proof}
The proof follows the interaction-diagram formalism of \cite{OP3}, with two modifications specific to the present setting.

First, for each operator $\widehat A_0(w_i,t)$, we commute its  $t^0$-term $P(w_i)$ to the right. Its interaction with $\widehat A_0(w_j,t)$ follows from the two-parameter commutation relation \eqref{eq:P-two-param-comm} by setting $b=tw_j/r$ and using \( 
\widehat A_0(w_j,t) = \widehat{\mathcal A}(tw_j/r,w_j) 
\)
:
\begin{equation*}
[P(w_i),\widehat A_0(w_j,t)]=\vars(tw_iw_j) \left(1+\frac{w_i}{w_j}\right)^{tw_j/r} \widehat{\mathcal{A}}(tw_j/r,w_i+w_j).
\end{equation*}

Second, an interaction with the vertex $\infty$ is governed by the conjugation identity \begin{equation*} 
e^{-\alpha_r/r}P(w_i)e^{\alpha_r/r} = \Ecal_0(w_i)
\end{equation*}
which can be deduced from the basic commutator \eqref{eq:E-commutator}.

By the same inclusion-exclusion cancellation as in the proof of \cite[Proposition 4.1]{OP3}, only the principal terms, where $S = \emptyset$ and no interaction with $\infty$ occurs, survive. For these remaining principal terms, each interaction edge and terminal vertex contributes at least $t^1$, implying each connected block $B$ contributes at least $t^{|B|}$. Summing over all blocks in the partition of $[m]$ yields the minimal degree $t^m$, proving (i). For (ii), summing the lowest-order trees over $B$ yields the weighted Cayley tree polynomial $T_B$. Combining this with the $t^1$-coefficient of the terminal operator
\[
  \left(\frac{W_B}{w_{b(B)}}\right)^{tw_{b(B)}/r} \widehat{\mathcal{A}} \left(tw_{b(B)}/r,W_B\right),
\]
as given by Corollary \ref{cor:multi-P-two-param-comm}, precisely produces the factor $A^{\vee,r}_B$. 
\end{proof}

Write \(\phi_a\) for the twisted-sector class of age \(a/r\) at the
orbifold point \(0\).  Define the stable non-equivariant orbifold cap series by
\begin{align*}
  G^{\mathrm{st}}_{\mathbf a,\nu}(\mathbf y,\mathbf z,\mathbf w)
  \coloneqq{}&
  \sum_{\mathbf k,\mathbf l,\mathbf q\geq0}
  \left\langle
    \prod_j\tau_{k_j}(\phi_{a_j})
    \prod_i\tau_{l_i}(\omega)
    \prod_h\tau_{q_h}(\id)
  \ \middle|\ \nu
  \right\rangle^{\bullet,\OrbCap_r}\\
 &\hspace{12mm}\cdot
  \prod_jy_j^{k_j+1}
  \prod_i z_i^{l_i+1}
  \prod_h w_h^{q_h+1}.
\end{align*}
\begin{thm}[Stable non-equivariant orbifold cap formula]\label{thm:noneq-cap}
\begin{equation}\label{eq:non-eq-cap}
  G^{\mathrm{st}}_{\mathbf a,\nu}(\mathbf y,\mathbf z,\mathbf w)
  =
  \left[
    \left\langle
      \overrightarrow{\prod_j} T_{a_j}(y_j)
      \overrightarrow{\prod_i} P(z_i)
      \sum_{\pi\in\Part([m])}
      \overrightarrow{\prod_{B\in\pi}}A_B^{\vee,r}
      e^{\alpha_r/r}
    \middle|\nu\right\rangle
  \right]_{+},
\end{equation}
where \(T_a(y)=[t^0]\widehat A_a(y,t)\).
\end{thm}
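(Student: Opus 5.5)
The plan is to derive Theorem~\ref{thm:noneq-cap} from the equivariant formula Theorem~\ref{thm:eq-cap} by taking the non-equivariant limit, using Proposition~\ref{prop:M-redu} to handle the unit insertions. The proof has three ingredients: converting each insertion class into equivariant classes, identifying the resulting operator, and taking $t\to0$.

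\textbf{Rewriting the insertions equivariantly.}
\begin{itemize}
\item The twisted classes $\phi_{a_j}$ lift to $[0]_{a_j/r}$.
\item The point class $\omega$ lifts to $[0]$.
\item Each unit $\id$ is written as $([0]-[\infty])/t$.
\end{itemize}
Expanding the product $\prod_{h=1}^m([0]-[\infty])$ gives a sum over $S\subseteq[m]$, with sign $(-1)^{|S|}$. The markings in $S$ carry $[\infty]$, and those outside $S$ carry $[0]$. By multilinearity of invariants, the generating series $t^{m}\,G^{T}$ with $m$ unit insertions equals $\sum_S(-1)^{|S|}$ times the equivariant series of Theorem~\ref{thm:eq-cap}. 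In that series, the $w_i$ with $i\notin S$ play the role of $z$-variables for $[0]$, and the $w_i$ with $i\in S$ play the role of $w$-variables for $[\infty]$.

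\textbf{Applying the equivariant formula and commuting.} Apply \eqref{eq:eq-cap-formula} to each term. Before summing, check that the $A$-operators attached to the $[0]$-insertions commute enough to be reordered: all untwisted $\widehat A_0(w_i,t)$ with $i\notin S$ are placed after the $\widehat A_{a_j}(y_j,t)$ and $\widehat A_0(z_i,t)$ factors. Within the positive part $[\cdot]_+$ the ordering of the $[0]$-operators is harmless. By \eqref{eq:johnson-commutator-support}, commutators of coefficients are scalars supported only on $p+q=-1$, so one of the two exponents is negative and such terms are killed by $[\cdot]_+$.

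After this reordering, the sum over $S$ becomes
\[
\Big\langle \overrightarrow{\prod_j}\widehat A_{a_j}(y_j,t)\overrightarrow{\prod_i}\widehat A_0(z_i,t)\,M_r(\mathbf w;t)\Big|\nu\Big\rangle,
\]
followed by $[\cdot]_+$. One must also check that the $[\cdot]_+$ projection commutes with the inclusion--exclusion and with extraction of $t$-coefficients. This holds because the projection acts only on the $\mathbf y,\mathbf z,\mathbf w$ exponents and is linear.

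\textbf{Taking the non-equivariant limit.} Non-equivariant invariants are the $t^0$ coefficient of $t^{-m}$ times this sum, since they are polynomial in $t$ and the limit exists. By Proposition~\ref{prop:M-redu}(i), $M_r$ has no terms of order below $t^m$. Hence the $[t^m]$ coefficient of the whole expression equals the $[t^0]$ parts of the left factors times $[t^m]M_r$. Higher $t$-powers of the left factors pair only with higher $t$-powers of $M_r$, which contribute to $t^{>m}$ and so vanish in the limit. The $[t^0]$ parts of the left factors are $T_{a_j}(y_j)$ and $P(z_i)$ by definition. Proposition~\ref{prop:M-redu}(ii) gives $[t^m]M_r=\sum_\pi\prod_{B}A_B^{\vee,r}e^{\alpha_r/r}$, which yields \eqref{eq:non-eq-cap}.

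\textbf{Main obstacle.} The delicate step is justifying the reordering of the $\widehat A_0(w_i,t)$ operators past the $\widehat A_0(z_i,t)$ and $\widehat A_{a_j}$ operators, since Theorem~\ref{thm:eq-cap} prescribes an ordering. The first approach to try is the scalar commutator \eqref{eq:johnson-commutator-support} together with the positive-part truncation. If that argument fails for terms where a negative-exponent scalar multiplies other positive monomials, the fallback is to note that the equivariant invariants are manifestly symmetric under relabelling the $[0]$-markings. In that case one can choose the labelling with the $[0]$-type $w$-markings last in Theorem~\ref{thm:eq-cap} from the start.
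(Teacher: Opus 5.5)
Your proposal is correct and follows the paper's proof. You substitute $\id=([0]-[\infty])/t$, apply Theorem~\ref{thm:eq-cap} and assemble the inclusion--exclusion sum into $M_r(\mathbf w;t)$, then use Proposition~\ref{prop:M-redu}(i) to keep only the $t^0$ parts $T_{a_j}(y_j)$, $P(z_i)$ of the left factors and (ii) to identify $[t^m]M_r$; your extra care about placing the $[0]$-type factors $\widehat A_0(w_i,t)$, $i\notin S$, after the $z$-factors addresses a point the paper leaves implicit, and it is settled either by relabelling the symmetric even markings or by noting that the scalar commutators in \eqref{eq:johnson-commutator-support} always carry a non-positive exponent and are therefore killed by $[\cdot]_+$.
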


\begin{proof}
Replacing each of the identity insertions by
\(([0]-[\infty])/t\) and applying Theorem~\ref{thm:eq-cap} gives
\[
  G^{\mathrm{st}}_{\mathbf a,\nu}
  =
  \left[
    [t^m]
    \left\langle
      \overrightarrow{\prod_j}\widehat A_{a_j}(y_j,t)
      \overrightarrow{\prod_i}\widehat A_0(z_i,t)
      M_r(\mathbf w;t)
    \middle|\nu\right\rangle
  \right]_{+}.
\]
By Proposition~\ref{prop:M-redu} (i), \(M_r(\mathbf w;t)\) has no term of
degree below \(m\).  Since the regularized \(A\)-operators are power series
in \(t\), only their constant terms can contribute to \([t^m]\).  Hence
\[
  [t^m]\left(
    \overrightarrow{\prod_j}\widehat A_{a_j}(y_j,t)
    \overrightarrow{\prod_i}\widehat A_0(z_i,t)
    M_r(\mathbf w;t)
  \right)
  =
  \overrightarrow{\prod_j}T_{a_j}(y_j)
  \overrightarrow{\prod_i}P(z_i)\,[t^m]M_r(\mathbf w;t).
\]
Proposition~\ref{prop:M-redu} (ii) now gives
\eqref{eq:non-eq-cap}.
\end{proof}

\section{Commutators and left-vacuum evaluations}\label{sec:comm-vac}

This section proves the operator identities used in the orbifold cap Virasoro proof.

\subsection{Basic commutators}
In the following, a factor
of the form
\[
  \left(1+\frac{x}{y}\right)^\lambda
  \quad\text{or}\quad
  \log\left(1+\frac{x}{y}\right)
\]
is always expanded in nonnegative powers of \(x/y\).  The expression with \(x\)
and \(y\) interchanged is expanded in nonnegative powers of \(y/x\).

Define the two parameter untwisted operator
\[
  \widehat {\mathcal{A}}(b,z)
\coloneqq \Scal(rz)^b\sum_{\ell\in\Z}\frac{(z\Scal(rz))^\ell}{(1+b)_\ell}\Ecal_{r\ell}(z),
  \label{eq:two-param-a0}
\]
Then \(\widehat A_0(z,t)=\widehat {\mathcal{A}}(tz/r,z)\). For
\(k\in\mathbb Z_{\geq0}\), we regard
\[
  \binom{x}{k}
  :=
  \frac{x(x-1)\cdots(x-k+1)}{k!}
\]
as a polynomial in \(x\).

\begin{lem}\label{lem:P-comm-two-param}
\begin{equation}\label{eq:P-two-param-comm}
  [P(x),\widehat {\mathcal{A}}(b,y)]
  =\vars(rbx)\left(1+\frac{x}{y}\right)^b\widehat {\mathcal{A}}(b,x+y).
\end{equation}
\end{lem}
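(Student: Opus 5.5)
The plan is to reduce \eqref{eq:P-two-param-comm} to the basic commutator \eqref{eq:E-commutator} plus a binomial summation. First I will make \(P\) explicit. Since \(P(x)=[t^0]\widehat A_0(x,t)=\widehat{\mathcal A}(0,x)\), and \(1/(1)_m=0\) for \(m<0\) by the Pochhammer convention, we get
\[
P(x)=\sum_{k\ge0}\frac{(x\Scal(rx))^k}{k!}\,\Ecal_{rk}(x).
\]
Then \eqref{eq:E-commutator} gives \([\Ecal_{rk}(x),\Ecal_{r\ell}(y)]=\vars\bigl(r(ky-\ell x)\bigr)\Ecal_{r(k+\ell)}(x+y)\). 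The central terms need no separate treatment, because the \(\delta_{j,0}\) normalization of \(\Ecal_0\) is already built into \eqref{eq:E-commutator}. Hence
\[
[P(x),\widehat{\mathcal A}(b,y)]=\sum_{n\in\Z}\Ecal_{rn}(x+y)\,\Scal(ry)^b\sum_{k\ge0}\frac{(x\Scal(rx))^k}{k!}\frac{(y\Scal(ry))^{n-k}}{(1+b)_{n-k}}\vars\bigl(r(ky-(n-k)x)\bigr),
\]
and it suffices to identify, for each fixed \(n\), the scalar coefficient of \(\Ecal_{rn}(x+y)\) with \(\vars(rbx)(1+x/y)^b\Scal(r(x+y))^b((x+y)\Scal(r(x+y)))^n/(1+b)_n\).

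Next I will normalize the Pochhammer factors. By the definition of \((1+b)_m\) for all signs of \(m\), one has \((1+b)_n/(1+b)_{n-k}=k!\binom{b+n}{k}\) as polynomials in \(b\), valid also when \(n<0\). Setting \(X=x\Scal(rx)/(y\Scal(ry))\), the inner sum becomes
\[
\frac{(y\Scal(ry))^n\Scal(ry)^b}{(1+b)_n}\sum_{k\ge0}\binom{b+n}{k}X^k\,\vars\bigl(r(ky-(n-k)x)\bigr).
\]
I then split \(\vars=e^{(\cdot)/2}-e^{-(\cdot)/2}\) and write \(e^{\pm r(ky-(n-k)x)/2}=e^{\mp rnx/2}e^{\pm rk(x+y)/2}\). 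Each half is a binomial series \((1+Xe^{\pm r(x+y)/2})^{b+n}\), understood in nonnegative powers of \(x/y\) in accordance with the stated expansion convention.

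The key computation is the closed form of \(1+Xe^{\pm r(x+y)/2}\). With \(a=e^{rx/2}\) and \(c=e^{ry/2}\), the sign \(+\) gives \(\bigl(c-c^{-1}+(a-a^{-1})ac\bigr)/(c-c^{-1})=a\,(ac-(ac)^{-1})/(c-c^{-1})\). Hence
\[
1+Xe^{r(x+y)/2}=e^{rx/2}\,\frac{(x+y)\Scal(r(x+y))}{y\Scal(ry)},
\]
and symmetrically the sign \(-\) gives the factor \(e^{-rx/2}\). Raising to the power \(b+n\) and multiplying by \(e^{\mp rnx/2}(y\Scal(ry))^n\Scal(ry)^b\), the \(n\)-dependent exponentials cancel. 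The \(+\) term becomes \(e^{rbx/2}(1+x/y)^b\Scal(r(x+y))^b((x+y)\Scal(r(x+y)))^n/(1+b)_n\), and the \(-\) term is the same with \(e^{-rbx/2}\). Their difference is exactly \(\vars(rbx)\) times the coefficient of \(\Ecal_{rn}(x+y)\) in \(\widehat{\mathcal A}(b,x+y)\), which proves \eqref{eq:P-two-param-comm}.

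The main obstacle is the bookkeeping of formal expansions rather than the algebra. One must check that \((1+Xe^{\pm r(x+y)/2})^{b+n}\) and \(((x+y)/y)^{b}\) are expanded in the same ring, namely nonnegative powers of \(x/y\); note \(X=x/y\cdot(1+O(x,y))\). One must also check that the \(k\)-sum, which is infinite when \(b+n\notin\Z_{\ge0}\), converges formally, and that exchanging the \(n\)- and \(k\)-summations is legitimate. I would handle this by working coefficientwise in \(x\), where only finitely many \(k\) contribute to each coefficient.
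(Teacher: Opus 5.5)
Your proposal is correct and follows the paper's proof step for step: you expand $P(x)=\widehat{\mathcal A}(0,x)$, apply \eqref{eq:E-commutator}, isolate the coefficient of $\Ecal_{rn}(x+y)$, and use $(1+b)_n/(1+b)_{n-k}=k!\binom{b+n}{k}$ to reduce to a binomial identity. The only difference is that you prove that identity, by splitting $\vars$ into exponentials and computing $1+Xe^{\pm r(x+y)/2}$ in closed form, whereas the paper simply asserts it as elementary.
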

\begin{proof}
For a fixed \(m\in\Z\), collect the terms proportional to
\(\Ecal_{rm}(x+y)\). By \eqref{eq:E-commutator}, its coefficient on the left-hand side is
\begin{align*}
  &\Scal(ry)^b
  \sum_{k\ge0}
  \frac{(x\Scal(rx))^k(y\Scal(ry))^{m-k}}
  {k!(1+b)_{m-k}}
  \vars\bigl(r(ky-(m-k)x)\bigr).
\end{align*}
Using
\begin{equation}\label{eq:pochhammer-binomial}
  \frac{(1+b)_m}{(1+b)_{m-k}}=k!\binom{m+b}{k},  
\end{equation}
this coefficient becomes
\[
  \frac{\Scal(ry)^b(y\Scal(ry))^m}{(1+b)_m}
  \sum_{k\geq0}
  \binom{m+b}{k}
  \left(\frac{\vars(rx)}{\vars(ry)}\right)^k
  \vars\bigl(r(ky-(m-k)x)\bigr).
\]
The elementary binomial identity
\[
  \sum_{k\geq0}
  \binom{m+b}{k}
  \left(\frac{\vars(rx)}{\vars(ry)}\right)^k
  \vars\bigl(r(ky-(m-k)x)\bigr)
  =
  \vars(rbx)
  \left(
    \frac{\vars(r(x+y))}{\vars(ry)}
  \right)^{m+b}
\]
transforms this expression into
\[
  \vars(rbx)
  \left(1+\frac{x}{y}\right)^b
  \Scal(r(x+y))^b
  \frac{((x+y)\Scal(r(x+y)))^m}{(1+b)_m}.
\]
This is the coefficient of \(\Ecal_{rm}(x+y)\) on the right-hand side of
\eqref{eq:P-two-param-comm}.
\end{proof}

\begin{cor}\label{cor:multi-P-two-param-comm}
\[ 
  [P(x_1),[P(x_2),[\dots [P(x_n),\widehat {\mathcal{A}}(b,y)]\dots]]]
  =\prod\varsigma(rbx_i)
  \left(1+\frac{\sum x_i}{y}\right)^b \widehat{\mathcal{A}}(b,\sum x_i+y).
\]
\end{cor}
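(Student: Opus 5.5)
The plan is induction on $n$, iterating Lemma~\ref{lem:P-comm-two-param} from the inside out. For $n=1$ the statement is exactly \eqref{eq:P-two-param-comm}. Suppose the identity holds for $n-1$ commutators, applied to $P(x_2),\dots,P(x_n)$. Set $X'=x_2+\cdots+x_n$. The inner nested commutator is then
\[
\prod_{i=2}^n\vars(rbx_i)\left(1+\frac{X'}{y}\right)^b\widehat{\mathcal A}(b,X'+y).
\]
The prefactor is a scalar series, since it involves no $\Ecal$-operators. So $[P(x_1),\cdot\,]$ acts only on $\widehat{\mathcal A}(b,X'+y)$.

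Apply Lemma~\ref{lem:P-comm-two-param} with $x=x_1$ and $y$ replaced by $X'+y$. This gives
\[
\vars(rbx_1)\left(1+\frac{x_1}{X'+y}\right)^b\widehat{\mathcal A}(b,x_1+X'+y).
\]
The new scalar factor supplies the missing $\vars(rbx_1)$. The remaining step is the telescoping identity
\[
\left(1+\frac{X'}{y}\right)^b\left(1+\frac{x_1}{X'+y}\right)^b=\left(1+\frac{x_1+X'}{y}\right)^b.
\]
Formally, $(y+X')/y\cdot(y+X'+x_1)/(y+X')=(y+X'+x_1)/y$.

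The one point needing care is expansion conventions, and this is where I expect the only real subtlety. By the stated convention, $(1+x_1/(X'+y))^b$ is expanded in nonnegative powers of $x_1/(X'+y)$, and the lemma is valid as a formal identity in that regime. I would interpret all factors in the region $|x_i|\ll|y|$: expand in nonnegative powers of the $x_i/y$, with $(X'+y)^{-k}$ re-expanded around $y$. In that ring of formal series, $\log(1+u)+\log(1+v)=\log((1+u)(1+v))$ holds for $u,v$ in the augmentation ideal. Hence the power-$b$ identity holds there, since $(1+u)^b=\exp(b\log(1+u))$ is a power series in $b$ and $u$.

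One must also check that substituting $y\mapsto X'+y$ in Lemma~\ref{lem:P-comm-two-param} is legitimate. The lemma is proved coefficientwise in $\Ecal_{rm}(x+y)$, via the binomial identity with exponent $m+b$, and that proof uses only algebraic identities in $\vars$ and $\Scal$. These remain valid after this substitution of formal variables. Combining the scalar factors completes the induction.
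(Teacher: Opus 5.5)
Your proposal is correct and matches the paper's argument. The paper gives no separate proof and treats the corollary as immediate from Lemma~\ref{lem:P-comm-two-param}: you iterate the lemma from the innermost commutator outward, pull out the scalar prefactors, and telescope $(1+X'/y)^b(1+x_1/(X'+y))^b$ into $(1+\sum x_i/y)^b$. Your justification of the substitution $y\mapsto X'+y$ in formal power series in the $x_i/y$ is sound, and it makes explicit a point the paper leaves implicit.
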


For later coefficient
extraction, write
\begin{equation*}
  P(z)=\sum_{k\in\Z}P_kz^{k+1},\qquad
  U(z)=\sum_{k\in\Z}U_kz^{k+1},\qquad
  T_a(z)=\sum_{k\in\Z}T_{k,a}z^{k+1}.
\end{equation*}

\begin{cor}\label{cor:P-U-T-U}
The following commutators hold:
\begin{align}
  [P(x),U(y)]&=xyP(x+y),\label{eq:P-U-comm}\\
  [T_a(x),U(y)]&=xy\left(1+\frac{y}{x}\right)^{a/r-1}T_a(x+y).\label{eq:T-U-comm}
\end{align}
In particular, for $m,k\geq 0$, we have
\begin{align}
  [P_m,U_k]&=\frac{(m+1)_k}{k!}P_{m+k-1},\label{eq:P-U-coeff}\\
  [T_{m,a},U_k]&=\frac{(m+a/r)_k}{k!}T_{m+k-1,a}.
\label{eq:T-U-modes}
\end{align}
\end{cor}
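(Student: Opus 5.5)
The plan is to derive everything from Lemma~\ref{lem:P-comm-two-param} by differentiating in the parameter \(b\), then handle the twisted case by a parallel two-parameter computation.

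\textbf{The untwisted identity.} Since \(\widehat A_0(z,t)=\widehat{\mathcal A}(tz/r,z)\), we have
\[
P(z)=\widehat{\mathcal A}(0,z),\qquad U(z)=\frac{z}{r}\,\partial_b\widehat{\mathcal A}(b,z)\big|_{b=0}.
\]
Differentiate \eqref{eq:P-two-param-comm} in \(b\) and set \(b=0\). The factor \(\vars(rbx)\) vanishes at \(b=0\) and has \(b\)-derivative \(rx\) there. Hence only one term survives:
\[
[P(x),\partial_b\widehat{\mathcal A}(0,y)]=rx\,\widehat{\mathcal A}(0,x+y)=rx\,P(x+y).
\]
Multiplying by \(y/r\) gives \eqref{eq:P-U-comm}.

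\textbf{The twisted identity.} For \eqref{eq:T-U-comm} I would prove a twisted analogue of Lemma~\ref{lem:P-comm-two-param}. Set \(c=a/r\) and write
\[
T_a(x)=\frac{x}{a}\,\Scal(rx)^{c}\sum_{m}\frac{(x\Scal(rx))^m}{(1+c)_m}\,\Ecal_{rm+a}(x).
\]
Compute \([T_a(x),\widehat{\mathcal A}(b,y)]\) exactly as in the proof of Lemma~\ref{lem:P-comm-two-param}, using \eqref{eq:E-commutator}:
\[
[\Ecal_{rm+a}(x),\Ecal_{rk}(y)]=\vars\bigl((rm+a)y-rkx\bigr)\,\Ecal_{r(m+k)+a}(x+y).
\]
Collect the coefficient of \(\Ecal_{rM+a}(x+y)\) and absorb the Pochhammer ratios via \eqref{eq:pochhammer-binomial}. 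The sum then reduces to the same binomial identity, with the exponent \(m+b\) replaced by the appropriate shifted exponent involving \(c\). This should yield a closed form of the shape
\[
(\text{factor vanishing at } b=0)\times\Bigl(1+\tfrac{y}{x}\Bigr)^{\ast}\times T_a\text{-type operator at } x+y.
\]

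Differentiating at \(b=0\) and multiplying by \(y/r\) then gives \(xy(1+y/x)^{c-1}T_a(x+y)\). The exponent shifts from \(c\) to \(c-1\) because the prefactor \(x/a\) is rescaled to \((x+y)/a\). I expect this bookkeeping, and fixing the correct expansion direction (in powers of \(y/x\)), to be the main obstacle. I would verify it first on the lowest-order terms \(\Ecal_a\) and \(\Ecal_{a-r}\).

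\textbf{Mode identities.} These follow by coefficient extraction, with the expansion conventions stated at the start of the subsection. Write \(T_a(x+y)=\sum_n T_{n,a}(x+y)^{n+1}\). Then
\[
xy\Bigl(1+\tfrac yx\Bigr)^{c-1}(x+y)^{n+1}=x^{1-c+1}\,y\,(x+y)^{n+c},
\]
expanded in powers of \(y/x\). The coefficient of \(x^{m+1}y^{k+1}\) forces \(n=m+k-1\) and equals
\[
\binom{m+k-1+c}{k}=\frac{(m+c)_k}{k!},
\]
which is \eqref{eq:T-U-modes}. Setting \(c=1\), which corresponds to the untwisted case \eqref{eq:P-U-comm}, gives \eqref{eq:P-U-coeff}. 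The hypothesis \(m,k\ge0\) is only used so that the binomial coefficient extraction applies to the expanded series.
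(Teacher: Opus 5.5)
Your argument for \eqref{eq:P-U-comm} and your coefficient extraction for \eqref{eq:P-U-coeff}--\eqref{eq:T-U-modes} are correct and match the paper. For \eqref{eq:P-U-comm} you differentiate Lemma~\ref{lem:P-comm-two-param} at $b=0$, where only $\partial_b\vars(rbx)|_{b=0}=rx$ survives.

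The twisted identity \eqref{eq:T-U-comm} has a genuine gap: $[T_a(x),\widehat{\mathcal A}(b,y)]$ does not reduce to the binomial identity of Lemma~\ref{lem:P-comm-two-param}. In that lemma the sum is one-sided only because the left operator is $P(x)=\widehat{\mathcal A}(0,x)$, whose weights $1/(1+0)_k=1/k!$ vanish for $k<0$. That is what lets \eqref{eq:pochhammer-binomial} rewrite $\frac{1}{k!\,(1+b)_{m-k}}$ as $\frac{1}{(1+b)_m}\binom{m+b}{k}$. In your commutator, the coefficient of $\Ecal_{rM+a}(x+y)$ is a sum over all $m+k=M$ with weight $\frac{1}{(1+a/r)_m(1+b)_k}$. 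Since $a/r\notin\Z$, the first factor is nonzero for every $m\in\Z$, and for generic $b$ so is the second. There is no $1/k!$ to absorb, and the sum is bilateral.

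Differentiating at $b=0$ before summing does not cure this. $U(y)$ keeps the negative modes of $\widehat A_0$, for instance $\frac{1}{r\Scal(ry)}\Ecal_{-r}(y)$. These pair with $T_a$-summands of index $m>M$ and expand in the opposite direction, in powers of $x/y$. Nor can they be discarded: the summands of $U(y)$ with $k\ge0$ start at order $y^2$, so $U_0$ consists entirely of negative modes. Consequently the term $xy\,T_a(x)$ of \eqref{eq:T-U-comm}, which is the $k=0$ case of \eqref{eq:T-U-modes}, comes only from them.

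The missing idea is to move the $t$-derivative onto the twisted operator. By \eqref{eq:johnson-commutator-support}, $[\widehat A_a(x,t),\widehat A_0(y,t)]=0$ for $1\le a\le r-1$. Its coefficient of $t$ gives $[T_a(x),U(y)]=[P(y),T_a^{(1)}(x)]$. Here $T_a^{(1)}(x)=\frac{x}{r}\partial_b\widehat{\mathcal A}_a(b,x)|_{b=0}$, and $\widehat{\mathcal A}_a(b,x)$ is the two-parameter twisted operator with $\widehat A_a(x,t)=\widehat{\mathcal A}_a(tx/r,x)$. Now the truncated operator $P(y)$ sits on the left and the sum runs over $k\ge0$. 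The binomial identity with $(m,x,y)$ replaced by $(m+\theta_a,y,x)$, where $\theta_a=a/r$, then yields $[P(y),\widehat{\mathcal A}_a(b,x)]=\frac{x}{x+y}\vars(rby)(1+\frac yx)^{b+\theta_a}\widehat{\mathcal A}_a(b,x+y)$, automatically expanded in powers of $y/x$. Differentiating at $b=0$ and multiplying by $x/r$ gives $xy(1+\frac yx)^{\theta_a-1}T_a(x+y)$. Your heuristic that the prefactor ratio $x/(x+y)$ lowers the exponent by one is then exactly what happens.
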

\begin{proof}
    $[P(x),U(y)]$ is exactly the $t^1$ term of $[P(x),\widehat A_0(y,t)]$. By Lemma \ref{lem:P-comm-two-param}, 
\[
  [P(x),\widehat A_0(y,t)]=[P(x),\widehat {\mathcal A}(ty/r,y)]
  =\vars(txy)\left(1+\frac xy\right)^{ty/r}\widehat {\mathcal A}(ty/r,x+y).
\]
The $t^1$ term on the right-hand side gives \(xyP(x+y)\), which proves \eqref{eq:P-U-comm}.

For \eqref{eq:T-U-comm}, we introduce the two-parameter sector-\(a\) operator
\[
  \widehat{\mathcal A}_a(b,z)
  :=
  \frac{z}{r(b+\theta_a)}\Scal(rz)^{b+\theta_a}
  \sum_{m\in\Z}
  \frac{(z\Scal(rz))^m}{(1+b+\theta_a)_m}\Ecal_{rm+a}(z),
\]
where \(\theta_a=a/r\).  Since
\(\widehat A_a(x,t)=\widehat{\mathcal A}_a(tx/r,x)\),
\begin{equation}
  T_a(x)
  =
  \widehat A_a(x,0)
  =
  \widehat{\mathcal A}_a(0,x),
  \qquad
  T_a^{(1)}(x)
  :=
  [t]\widehat A_a(x,t)
  =
  \frac{x}{r}
  \left.
    \frac{\partial}{\partial b}\widehat{\mathcal A}_a(b,x)
  \right|_{b=0}.
  \label{eq:Ta1-from-b}
\end{equation}

By
\eqref{eq:johnson-commutator-support},
\[
  [\widehat A_a(x,t),\widehat A_0(y,t)]=0
  \qquad(1\leq a\leq r-1).
\]
Taking the coefficient of \(t\) gives
\begin{equation}
  [T_a(x),U(y)]
  =
  [P(y),T_a^{(1)}(x)].
  \label{eq:first-order-sector-commutativity}
\end{equation}
It remains to compute the right-hand side.  We claim that
\begin{equation}
  [P(y),\widehat{\mathcal A}_a(b,x)]
  =
  \frac{x}{x+y}\vars(rby)
  \left(1+\frac yx\right)^{b+\theta_a}
  \widehat{\mathcal A}_a(b,x+y).
  \label{eq:one-param-mixed}
\end{equation}
For fixed \(m\in\Z\), compare the coefficients of
\(\Ecal_{rm+a}(x+y)\).  Applying \eqref{eq:pochhammer-binomial} with
\(b\) replaced by \(b+\theta_a\), the coefficient on the left is
\[
  \frac{x\Scal(rx)^{b+\theta_a}}
       {r(b+\theta_a)(1+b+\theta_a)_m}
  \sum_{k\geq0}
  \binom{m+b+\theta_a}{k}
  (y\Scal(ry))^k
  (x\Scal(rx))^{m-k}
  \vars\bigl(rkx-r(m-k+\theta_a)y\bigr).
\]
The binomial identity used in the proof of
Lemma~\ref{lem:P-comm-two-param}, with
\((m,x,y)\) replaced by \((m+\theta_a,y,x)\), gives
\[
  \sum_{k\geq0}
  \binom{m+b+\theta_a}{k}
  \left(\frac{\vars(ry)}{\vars(rx)}\right)^k
  \vars\bigl(rkx-r(m-k+\theta_a)y\bigr)
  =
  \vars(rby)
  \left(
    \frac{\vars(r(x+y))}{\vars(rx)}
  \right)^{m+b+\theta_a}.
\]
Substitution gives the coefficient on the right-hand side of
\eqref{eq:one-param-mixed}, which completes the proof of the claim.

Differentiate \eqref{eq:one-param-mixed} at \(b=0\) and multiply by \(x/r\),
as in \eqref{eq:Ta1-from-b}.  Since \(\vars(0)=0\), only the derivative of
\(\vars(rby)\) contributes.  Hence
\begin{align*}
  [P(y),T_a^{(1)}(x)]
  &=
  \frac{x}{r}\frac{x}{x+y}(ry)
  \left(1+\frac yx\right)^{\theta_a}T_a(x+y)\\
  &=
  xy\left(1+\frac yx\right)^{\theta_a-1}T_a(x+y).
\end{align*}
Together with \eqref{eq:first-order-sector-commutativity}, this proves
\eqref{eq:T-U-comm}.  Extracting the coefficient of
\(x^{m+1}y^{k+1}\) gives the two coefficient commutators.
\end{proof}

\subsection{Left-vacuum evaluation formulas}

Set
\begin{equation}\label{eq:harmonic}
  h_k=\sum_{j=1}^{k}\frac1j,
  \qquad h_0=0.
\end{equation}
For \(k\ge2\), define
\begin{equation}\label{eq:Rk-def}
  \begin{aligned}
    R_k={}&-\frac{h_k}{r}P_{k-1}
    +\frac1{2rk!}\sum_{p=0}^{k-3}(p+1)!(k-p-2)!P_pP_{k-p-3}
    \\
    &+\frac r{2k!}\sum_{a=1}^{r-1}
    \sum_{p=0}^{k-2}\left(\frac{a}{r}\right)_{p+1}
    \left(1-\frac{a}{r}\right)_{k-p-1}
    T_{p,a}T_{k-p-2,a^\vee}.
  \end{aligned}
\end{equation}
Here \(a^\vee=r-a\); a sum with upper limit smaller than its lower limit is
understood to be zero.

\begin{prop}\label{prop:vacuum-removal}
For \(k\ge2\), 
\[
  \langle0|U_k=\langle0|R_k.
\]
For \(k=1\),
\begin{equation}\label{eq:U1-vacuum}
    \langle0|U_1=-\frac1r\langle0|P_0+\frac{r^2-1}{24r}\langle0|.
\end{equation}
\end{prop}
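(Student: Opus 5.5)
We plan to compute $\langle0|U_k$ directly from the explicit $\Ecal$-expansion of $U(z)=[t^1]\widehat A_0(z,t)$, and then identify the surviving pieces with the quadratic and linear expressions in $P$ and $T_a$ appearing in $R_k$.

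\emph{Step 1: expand $U$ and discard vacuum-annihilated modes.} From \eqref{eq:A0hat},
\[
U(z)=\frac{z}{r}\log\Scal(rz)\,\Ecal_0(z)+\sum_{m\neq0}\frac{(z\Scal(rz))^m}{m!}\Bigl(\log\Scal(rz)-\frac{1}{m!}\,\partial_b(1+b)_m^{-1}\big|_{b=0}\,m!\Bigr)\frac{z}{r}\Ecal_{rm}(z)
\]
(schematically: the $t$-derivative of $\Scal(rz)^{tz/r}/(1+tz/r)_m$). Since $\langle0|\Ecal_{j}(z)=0$ for $j<0$ (adjoint of $\Ecal_{-j}$ lowering energy below zero), only $m\ge0$ survives. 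The $m=0$ term is a scalar plus $\Ecal_0$; $\langle0|\Ecal_0(z)=\langle0|/\vars(z)$. So $\langle0|U_k$ becomes a combination of $\langle0|$ times a constant and $\langle0|\Ecal_{rm}$ coefficients with $m>0$, weighted by harmonic numbers coming from $\partial_b(1+b)_m^{-1}|_{b=0}=-h_m/m!$.

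\emph{Step 2: produce the same combination from $R_k$.} We do the analogous expansion of $\langle0|P_{k-1}$, of $\langle0|P_pP_{q}$ and of $\langle0|T_{p,a}T_{q,a^\vee}$. For the quadratic terms, we write $\langle0|XY$ with $X=\sum_{m}c_m\Ecal_{rm+a}$ and use that only $m\ge0$ (resp.\ $rm+a\ge0$) modes survive on the left; products $\Ecal_{j}(x)\Ecal_{j'}(y)$ with $j,j'>0$ are then rewritten via \eqref{eq:E-commutator} only where needed to reach normal form. More efficiently, we would prove the generating-function identity
\[
\langle0|U(z)=\langle0|\Bigl[-\tfrac1r\bigl(\text{harmonic-weighted }P\bigr)+\tfrac{1}{2r}\!\oint P(x)P(z-x)\,\mu(x)+\tfrac r2\sum_a\oint T_a(x)T_{a^\vee}(z-x)\,\mu_a(x)\Bigr]
\]
by comparing coefficients of $\langle0|\Ecal_{j}(z)$ after applying the vacuum-on-the-left reduction: a product $\langle0|\Ecal_{j}(x)\Ecal_{j'}(y)$ with $j+j'=rm$ combines, upon summing over splittings $j=rp'+a$, into the single-mode $\langle0|\Ecal_{rm}(x+y)$ (the terms where either index is negative vanish, and the commutator term $\vars(\cdot)\Ecal_{rm}(x+y)$ produces the single mode). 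The twisted sum over $a$ with Pochhammer weights $(a/r)_{p+1}(1-a/r)_{k-p-1}$ is exactly what is needed to reproduce, via the Gamma/Beta integral $\sum_a$ over sectors, the logarithm $\log\Scal(rz)$ and the $-h_m$ factor; the untwisted $PP$ sum supplies the $a=0$ sector.

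\emph{Step 3: constants and $k=1$.} The scalar part arises from $\frac{z}{r}\log\Scal(rz)/\vars(z)$ and the double-pole contributions in Step 2; for $k=1$ the quadratic sums in \eqref{eq:Rk-def} are empty except the twisted one with $p=-1$ excluded, leaving the constant $\sum_a\frac14(\frac14-(\frac ar-\frac12)^2)\cdot\frac{r}{\ldots}$, which we evaluate as $\frac{r^2-1}{24r}$ using $\sum_{a=1}^{r-1}a(r-a)=\frac{r(r^2-1)}6$. The main obstacle is Step 2: matching the combinatorial coefficients of the quadratic terms against the $\log\Scal$ and harmonic-number terms of $U$; we would first verify it at the level of the leading $\Ecal_{rm}$ coefficient for small $m$, then prove the general case by a Vandermonde-type identity for Pochhammer symbols, $\sum_{a}\sum_p(\theta_a)_{p+1}(1-\theta_a)_{k-p-1}$, reducing to the derivative in $b$ of $\binom{m+b}{k}$ as in \eqref{eq:pochhammer-binomial}.
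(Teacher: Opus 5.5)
Your Step~1 is sound. Carried out for $U_1$ and $P_0$, it already settles $k=1$: it gives $U_1^*|0\rangle=(-\frac1r\alpha_{-r}+\frac r{24})|0\rangle$ and $P_0^*|0\rangle=(\alpha_{-r}-\frac1{24})|0\rangle$. So $\frac{r^2-1}{24r}=\frac r{24}-\frac1{24r}$ comes from the scalar parts $\frac zr\log\Scal(rz)/\vars(z)$ and $1/\vars(z)$. It does not come from a leftover twisted quadratic term, as your Step~3 suggests; there is no $R_1$.

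The genuine gap is Step~2: its mechanism does not exist. You want $\langle0|\Ecal_j(x)\Ecal_{j'}(y)$ to collapse via \eqref{eq:E-commutator} into a single mode $\langle0|\Ecal_{rm}(x+y)$, and then to compare coefficients of the covectors $\langle0|\Ecal_j(z)$. But for $j,j'>0$ such a product is a genuine two-particle covector. At $k=2$ the relevant one is $\langle0|\alpha_a\alpha_{r-a}$, and since $[\alpha_a,\alpha_{r-a}]=0$, nothing collapses. Conversely, the single-mode covectors $\langle0|\Ecal^{[q]}_{rm}$ with $q\ge1$ that occur in $\langle0|U_k$ are themselves combinations of multi-boson states. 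They are therefore not independent coordinates against which coefficients can be compared.

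Concretely, the $k=2$ case asserts that $U_2^*|0\rangle+\frac3{2r}P_1^*|0\rangle=\frac1{2r}\Ecal^{[1]}_{-r}|0\rangle$ equals $(R_2^{\mathrm{tw}})^*|0\rangle=\frac1{4r}\sum_{a=1}^{r-1}\alpha_{-a}\alpha_{-(r-a)}|0\rangle$. This holds only because of the boson--fermion identity $\Ecal^{[1]}_{-n}|0\rangle=\frac12\sum_{i+j=n}\alpha_{-i}\alpha_{-j}|0\rangle$, and a sector sum of Pochhammer weights or a ``Beta integral'' cannot see it.

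Your assertion that terms with a negative index vanish is also false for the right-hand factor. In $\langle0|T_{p,a}T_{q,a^\vee}$, the $m<0$ modes of $T_{q,a^\vee}$ act nontrivially on $\langle0|\Ecal_{rm'+a}$. For example, at $k=3$ the $m=-1$ summand of $T_{1,a^\vee}$ is exactly what produces the constant $c_a=\frac{a^2-ar-1}{24r}$ in $(T_{0,a}T_{1,a^\vee})^*|0\rangle$. Dropping such terms breaks the identity.

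Even with correct bosonic bookkeeping, the all-$k$ step is absent. The ``Vandermonde-type identity'' is never formulated, and the complexity of $\langle0|U_k$ grows with $k$: higher $\Ecal^{[q]}_{-n}|0\rangle$ are higher-degree boson polynomials with correction terms, cf.\ \eqref{eq:E2exp}.

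The paper avoids any closed-form evaluation. It checks $k=2,3$ by full bosonization and then inducts on $k$:
\begin{itemize}
\item Lemma~\ref{lem:U2Uk} gives $[U_2,U_k]=\gamma_kU_{k+1}+\frac{\delta_k}rP_k$. This comes from the $t^2$-coefficient of \eqref{eq:johnson-commutator-support}, which is linear in $t$, evaluated with \eqref{eq:P-two-param-comm}.
\item $[R_2,R_k]=0$, because all modes in $R_j$ are nonnegative.
\item $R_k$ satisfies the parallel recursion $[R_2,U_k]-[R_k,U_2]=\gamma_kR_{k+1}+\frac{\delta_k}rP_k$.
\end{itemize}
Applying $\langle0|U_2=\langle0|R_2$ and $\langle0|U_k=\langle0|R_k$ then yields $\gamma_k\langle0|U_{k+1}=\gamma_k\langle0|R_{k+1}$, with $\gamma_k=\frac{(k+1)(2-k)}2\neq0$ for $k\ge3$.

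To turn your direct route into a proof, you would have to expand both sides completely as polynomials in the annihilation modes $\alpha_n$ ($n>0$) acting on $\langle0|$. You would need to keep every negative-mode contraction, and then actually prove the resulting identity for all $k$. That is precisely the content your proposal leaves out.
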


The proof is given in Appendix~\ref{app:vacuum}.  For $k\leq 3$, the proof
uses the Boson--Fermion correspondence, while the general case
follows from two auxiliary commutator identities and induction.

\section{Reactions and marked identity recursion}\label{sec:reaction-marked}

\subsection{Reaction table} After its leading derivative marks an insertion \(\tau_k(\id)\), the
remaining terms of \(L_{k-1}\) replace that insertion according to the
following table.  It will be used for \(k\geq2\), \(\ell\geq0\), and
\(1\leq a\leq r-1\); empty sums are zero.
\small
\begin{longtable}{>{\raggedright\arraybackslash}p{0.28\textwidth}p{0.62\textwidth}}
\toprule
Reaction & Coefficient and output \\
\midrule
\(\id+\id\to\id\) &
\(\displaystyle \tau_k(\id)\tau_l(\id)\mapsto
\frac{(l)_k}{k!}\tau_{k+l-1}(\id)\) \\
\addlinespace
\(\id+\id\to\omega\) &
\(\displaystyle \tau_k(\id)\tau_l(\id)\mapsto
\frac{\mathsf (l)_{k}}{rk!}\left(\sum_{j=l}^{k+l-1}\frac{1}{j}\right)\tau_{k+l-2}(\omega)\) \\
\addlinespace
\(\id+\omega\to\omega\) &
\(\displaystyle \tau_k(\id)\tau_l(\omega)\mapsto
\frac{(l+1)_k}{k!}\tau_{k+l-1}(\omega)\) \\
\addlinespace
\(\id+\phi_a\to\phi_a\) &
\(\displaystyle \tau_k(\id)\tau_l(\phi_a)\mapsto
\frac{(l+a/r)_k}{k!}\tau_{k+l-1}(\phi_a)\) \\
\addlinespace
\(\id\to\omega\) &
\(\displaystyle \tau_k(\id)\mapsto
-\frac1r\left(\sum_{j=1}^k\frac1j\right)\tau_{k-1}(\omega)\) \\
\addlinespace
\(\id\to\omega+\omega\) &
\(\displaystyle \tau_k(\id)\mapsto
\frac1{2rk!}\sum_{p=0}^{k-3}(p+1)!(k-p-2)!
\tau_p(\omega)\tau_{k-p-3}(\omega)\) \\
\addlinespace
\(\id\to\phi_a+\phi_{a^\vee}\) &
\(\displaystyle \tau_k(\id)\mapsto
\frac{r}{2k!}\sum_{p=0}^{k-2}
\begin{aligned}[t]
& (a/r)_{p+1}(1-a/r)_{k-p-1}\\
&\times\tau_p(\phi_a)\tau_{k-p-2}(\phi_{a^\vee})
\end{aligned}\) \\
\bottomrule
\end{longtable}
\normalsize

The first two rows are called Class I reactions.  The rest are called Class II reactions.

\subsection{Class I series}
Let
\[
  \scrC_N(w_1,\ldots,w_N)
  =
  \sum_{\pi\in\Part([N])}
  \overrightarrow{\prod_{B\in\pi}} A_B^{\vee,r},
  \qquad
  \scrC_0=1,
\]
where the blocks of each partition are written in increasing order of their
maximal labels.  For a nonempty block \(B\), recall that
\[
  W_B=\sum_{i\in B}w_i,
  \qquad
  b(B)=\max B,
  \qquad
  T_B=\left(\prod_{i\in B}w_i\right)W_B^{|B|-2},
\]
and put
\[
  A_B^{\vee,r}=I_B+E_B,
  \qquad
  I_B=T_B\frac{w_{b(B)}}{W_B}U(W_B),
  \qquad
  E_B=\frac{T_Bw_{b(B)}}{r}
       \log\frac{W_B}{w_{b(B)}}P(W_B).
\]
We call $I_B$ the identity part and $E_B$ the point part.

\subsection{Marked identity recursion}
\begin{lem}\label{lem:point-absorption}
Let \(L\) be a finite totally ordered set of identity labels with variables
\(\{w_i\}_{i\in L}\), and let \(*\) be a distinguished point label with
variable \(x\).  For a set partition
\(\pi=(B_1,\ldots,B_{q(\pi)})\in \Part(L)\), write its blocks in increasing order
of their maximal labels and set
\[
  X_0=x,
  \qquad
  X_\alpha=x+\sum_{\beta=1}^{\alpha}W_{B_\beta}.
\]
Then
\begin{equation}
  \sum_{\pi\in\Part(L)}
  \prod_{\alpha=1}^{q(\pi)}
  X_{\alpha-1}\,w_{b(B_\alpha)}\,T_{B_\alpha}
  =
  \left(x\prod_{i\in L}w_i\right)
  \left(x+\sum_{i\in L}w_i\right)^{|L|-1}.
  \label{eq:point-absorption}
\end{equation}
For \(L=\varnothing\), both sides are understood to be \(1\).
\end{lem}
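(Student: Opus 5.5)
The identity \eqref{eq:point-absorption} is a weighted Cayley formula with a distinguished root. I would prove it by induction on \(|L|\), splitting off the block that contains the largest label; the inductive step then reduces to Hurwitz's multivariate form of Abel's binomial identity. The base case \(L=\varnothing\) holds by convention. The case \(|L|=1\) is immediate: both sides equal \(x w_1\), since \(T_{\{1\}}=w_1\cdot w_1^{-1}=1\).

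\textbf{Inductive step.} Let \(\ell=\max L\). In every \(\pi\in\Part(L)\), the block \(B\) containing \(\ell\) has the largest maximal label, so it is the last block \(B_{q(\pi)}\). Its prefactor is
\[
X_{q-1}=x+W_{L\setminus B},
\]
and \(w_{b(B)}=w_\ell\). The remaining blocks form an arbitrary partition of \(L\setminus B\), still ordered by maximal labels, with the same \(X_\alpha\)'s. Applying the induction hypothesis to \(L\setminus B\), the left-hand side becomes
\[
\sum_{B\ni \ell} x\prod_{i\in L\setminus B}w_i\,(x+W_{L\setminus B})^{|L\setminus B|-1}\cdot (x+W_{L\setminus B})\, w_\ell\,\prod_{i\in B}w_i\, W_B^{|B|-2}.
\]
Factoring out \(x\prod_{i\in L}w_i\), it remains to show
\[
\sum_{B\ni\ell} w_\ell\,W_B^{|B|-2}(x+W_{L\setminus B})^{|L\setminus B|}=(x+W_L)^{|L|-1}.
\]

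\textbf{Reduction to Hurwitz--Abel.} Set \(M=L\setminus\{\ell\}\), write \(B=\{\ell\}\sqcup S\) with \(S\subseteq M\), and put \(a=w_\ell\), \(b=x\). The required identity becomes
\[
\sum_{S\subseteq M} a\,(a+W_S)^{|S|-1}\,(b+W_{M\setminus S})^{|M\setminus S|}=(a+b+W_M)^{|M|}.
\]
This is exactly Hurwitz's generalization of Abel's identity.

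\textbf{Proving Hurwitz--Abel.} This is the step I expect to need the most care, mainly to pin down a clean self-contained argument. I would give a combinatorial proof via functional digraphs. Expand \((a+b+W_M)^{|M|}\) as a sum over maps \(f\colon M\to M\sqcup\{\mathbf a,\mathbf b\}\), with weight \(\prod_{i\in M}\mathrm{wt}(f(i))\). Let \(S\) be the set of elements whose forward orbit reaches \(\mathbf a\). Then \(f|_S\) is a forest on \(S\) rooted at \(\mathbf a\), and by the weighted Cayley/Abel forest formula these contribute
\[
a\,(a+W_S)^{|S|-1}.
\]
The restriction \(f|_{M\setminus S}\) is an arbitrary map \(M\setminus S\to (M\setminus S)\sqcup\{\mathbf b\}\), since no element of \(M\setminus S\) may point into \(S\cup\{\mathbf a\}\). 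These maps contribute
\[
(b+W_{M\setminus S})^{|M\setminus S|}.
\]

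Alternatively, one can avoid the induction altogether. Interpret \(T_B=\sum_{\text{trees on }B}\prod_v w_v^{\deg v-1}\), and read each factor \(X_{\alpha-1}w_{b(B_\alpha)}\) as an edge joining \(b(B_\alpha)\) to a vertex among \(\{*\}\cup B_1\cup\cdots\cup B_{\alpha-1}\). This produces a bijection with spanning trees on \(\{*\}\cup L\), whose weighted count is the right-hand side by Cayley's formula. The inductive route above is the one I would write out.
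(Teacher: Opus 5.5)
Your proof is correct, but your main route differs from the paper's. Both arguments start from the same recursion, obtained by isolating the terminal block $B\ni\max L$. From there they diverge.

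The paper never substitutes the closed form into that recursion. It introduces the spanning-tree sum $G_L(x)=\sum_{\Gamma\in\Tree(\{*\}\cup L)}\prod_{\{u,v\}\in E(\Gamma)}x_ux_v$. It shows bijectively, by cutting the first edge on the path from $\max L$ to $*$, that $G_L$ satisfies the same recursion. The product form then follows in one step from the weighted Cayley formula on $\{*\}\cup L$.

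You instead insert the inductive closed form into the recursion. This reduces the step to Hurwitz's multivariate Abel identity, which you then prove using functional digraphs and the rooted weighted Cayley formula. Both parts check out:
\begin{itemize}
\item The substitution $a=w_\ell$, $b=x$, $W_B=a+W_S$, $|B|-2=|S|-1$ is right.
\item In the digraph step, no element of $M\setminus S$ can map into $S\cup\{\mathbf a\}$.
\item Conversely, any pair consisting of a tree on $S\cup\{\mathbf a\}$ rooted at $\mathbf a$ and a map $M\setminus S\to(M\setminus S)\sqcup\{\mathbf b\}$ reassembles uniquely into some $f$ with exactly $S$ as its $\mathbf a$-basin.
\end{itemize}

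What each route buys: the paper's is shorter and explains the identity. Its left side is literally a structured enumeration of spanning trees on $\{*\}\cup L$, so the closed form is just Cayley's formula. Yours isolates a classical identity that could simply be cited. Proving it in-house costs an extra combinatorial layer that ultimately rests on the same Cayley input.

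Your ``alternative'' is exactly the paper's argument, but it contains a slip. The block weight $T_B=\bigl(\prod_{i\in B}w_i\bigr)W_B^{|B|-2}$ equals $\sum_{\text{trees on }B}\prod_{v}w_v^{\deg v}$, i.e.\ edge weight $w_uw_v$. It is not $\prod_v w_v^{\deg v-1}$. With exponent $\deg v-1$ each block is off by $\prod_{i\in B}w_i$. The joining-edge factors $X_{\alpha-1}w_{b(B_\alpha)}$ would then not combine with the block weights into a uniform weight on the glued tree.
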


\begin{proof}
Let $F_L(x)$ denote the left-hand side of \eqref{eq:point-absorption}, with $F_\emptyset(x)=1$. For $L \neq \emptyset$, let $m = \max L$. In an ordered partition of
$L$, the terminal block $B$ must contain $m$, while the preceding blocks partition $L \setminus B$. Thus $F_L(x)$ gives the recursion
\begin{equation}
	F_L(x)=
	\sum_{\substack{B\subseteq L\\ m\in B}}
	F_{L\setminus B}(x)
	\bigl(x+W_{L\setminus B}\bigr)w_mT_B .
	\label{eq:FL-recursion}
\end{equation}

Now let
\[
  G_L(x)\coloneqq
  \sum_{\Gamma\in\Tree(\{*\}\cup L)}
  \prod_{\{u,v\}\in E(\Gamma)}x_ux_v,
  \qquad 
  x_*=x,
  \quad 
  x_i=w_i.
\]
Given such a tree, remove the first edge on the path from \(m\) to \(*\).
If \(B\) is the vertex set of the component containing \(m\), then
\(m\in B\subseteq L\).  Conversely, the tree is reconstructed uniquely by
choosing a tree on \(B\), a tree on
\(\{*\}\sqcup(L\setminus B)\), a vertex in the latter tree, and then joining
that vertex to \(m\).  Summing over the choice of the joining vertex gives
\((x+W_{L\setminus B})w_m\).
Since \(T_B\) is the weighted tree polynomial on \(B\), \(G_L(x)\) satisfies
the same recursion \eqref{eq:FL-recursion}.  The common initial value
at \(L=\varnothing\) therefore gives \(F_L(x)=G_L(x)\).
	
The explicit product form \eqref{eq:point-absorption} then follows immediately from the classical weighted Cayley tree identity:
\[
  G_L(x)
  =
  \left(x\prod_{i\in L}w_i\right)
  \left(x+\sum_{i\in L}w_i\right)^{|L|-1}.
\]
\end{proof}

\begin{thm}[Marked identity recursion]\label{thm:marked-recursion}
Let \(z=w_1\) be the marked identity variable and set
\[
  \widehat{\mathbf w}_j=(w_2,\ldots,\widehat{w_j},\ldots,w_N).
\]
Then
  \begin{equation}\label{eq:marked-recursion}
  \begin{aligned}
    \scrC_N(z,w_2,\ldots,w_N)
    ={}&U(z)\scrC_{N-1}(w_2,\ldots,w_N)
    \\
    &+\sum_{j=2}^{N}\frac{zw_j^2}{z+w_j}
    \scrC_{N-1}(w_2,\ldots,z+w_j,\ldots,w_N)
    \\
    &+\sum_{j=2}^{N}\frac{zw_j^2}{r}
    \log\left(1+\frac z{w_j}\right)
    P(z+w_j)\scrC_{N-2}(\widehat{\mathbf w}_j).
  \end{aligned}
  \end{equation}
\end{thm}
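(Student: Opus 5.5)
We classify the set partitions \(\pi\in\Part([N])\) according to the block \(B\ni 1\) containing the marked label \(z=w_1\). Since blocks are ordered by increasing maximal label and \(1\) is the smallest label, the block containing \(1\) is first exactly when it is the singleton \(\{1\}\). This is the first case. Here \(T_{\{1\}}=z\cdot z^{-1}=1\), \(w_{b(B)}/W_B=1\) and \(\log(W_B/w_{b(B)})=0\). Hence \(A^{\vee,r}_{\{1\}}=U(z)\), and the remaining blocks form an arbitrary partition of \(\{2,\ldots,N\}\). This gives the term \(U(z)\scrC_{N-1}(w_2,\ldots,w_N)\).

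In the second case \(|B|\geq2\), so \(b(B)=\max B\ne 1\) and the position of \(B\) in the ordered product is governed by its other labels. We merge \(z\) into one of the other labels and compare with \(\scrC_{N-1}\). Fix \(j\in B\setminus\{1\}\) and consider the block \(B'=(B\setminus\{1,j\})\cup\{j\}\), with variable \(w_j\) replaced by \(z+w_j\). Then \(W_{B'}=W_B\) and \(b(B')=b(B)\), so the block stays at the same place in the ordering. For the weights, \(T_{B'}=(z+w_j)\prod_{i\in B\setminus\{1,j\}}w_i\,W_B^{|B|-3}\), while \(T_B=z\prod_{i\in B\setminus\{1\}}w_i\,W_B^{|B|-2}\).

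The key combinatorial input is the weighted Cayley identity for \(T_B\) (as in Lemma~\ref{lem:point-absorption}). We sum over the neighbour \(j\) of vertex \(1\) on a tree, viewing \(1\) as a leaf attached to \(j\) after contracting it. This should express \(T_B\), together with the prefactors \(w_{b(B)}/W_B\) of \(I_B\) and \(w_{b(B)}\) of \(E_B\), as \(\sum_j\frac{zw_j^2}{z+w_j}\) times the contracted data. Trees in which \(1\) is not a leaf must also be absorbed correctly. I plan to verify this at the level of generating polynomials, using the product formula \(\prod w_i\,W^{|B|-2}\) on both sides, rather than tree by tree.

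Concretely, I expect the identity part to match cleanly:
\[\sum_{j\in B\setminus\{1\}}\frac{zw_j^2}{z+w_j}\,T_{B'}\Big|_{w_j\to z+w_j}\cdot\frac{w_{b}}{W_B}=T_B\frac{w_b}{W_B}\cdot(\text{correction}),\]
where the correction is exactly what the third line of \eqref{eq:marked-recursion} accounts for. The point part \(E_B\) carries \(\log(W_B/w_{b(B)})\). After substituting \(w_j\mapsto z+w_j\), this logarithm changes only when \(j=b(B)\). Otherwise it is unchanged and must match via the same Cayley sum.

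The main obstacle is the mismatch of the point parts. The ratio \(\log(W_B/(z+w_b))\) versus \(\log(W_B/w_b)\) produces a defect \(-\log(1+z/w_b)\), times \(P(W_B)\). Also, the factor \(w_{b(B)}\) becomes \(z+w_b\) when \(j=b\). I expect these defects to telescope only for \(B=\{1,j\}\), where they produce \(\frac{zw_j^2}{r}\log(1+z/w_j)P(z+w_j)\). For larger blocks they should cancel against the non-leaf tree contributions. I would first check this cancellation by explicit computation for \(|B|=2,3\) to fix signs and prefactors. The two-element block \(\{1,j\}\) is then removed, and the rest is a partition of \(\widehat{\mathbf w}_j\), giving \(\scrC_{N-2}(\widehat{\mathbf w}_j)\). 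The ordering is respected because the block \(\{1,j\}\) is placed by \(j\), matching the factor \(P(z+w_j)\) placed at \(j\)'s position. The delicate part is the operator ordering in \(\scrC_{N-2}\). I would resolve it by noting that blocks are ordered by their maxima, and rely on the ordering convention implicit in the statement to match where \(P(z+w_j)\) sits.
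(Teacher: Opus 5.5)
Your singleton case is right, and the contraction idea works for the identity part, more simply than you suggest. With $B'$ your contracted block and $b=\max B$, the product formula gives $\frac{zw_j^2}{z+w_j}I_{B'}=\frac{w_j}{W_B}I_B$ for $j\neq b$ and $\frac{z+w_b}{W_B}I_B$ for $j=b$, and these sum to exactly $I_B$. No ``correction'' from the third line can enter there, because the third line only ever carries a $P$ in the marked slot.

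The genuine gap is in the point part. The same contractions applied to $E_B$ give $E_B-\frac{x}{W_B}\frac{w_bT_B}{r}\log\frac{x}{w_b}P(W_B)$, with $x=z+w_b$. This shortfall is nonzero for \emph{every} nonsingleton $B$, not only for $B=\{1,j\}$. For example, for $B=\{1,i,b\}$ with $1<i<b$ it equals $\frac{zw_iw_b^2(z+w_b)}{r}\log(1+z/w_b)\,P(W_B)$. Nothing in the second line can cancel it, since the product formula for $T_B$ has already been used in full; there are no leftover ``non-leaf tree contributions''. So your claim that the defects cancel for $|B|\geq 3$ is false. Your reading of the third line as producing only the two-element blocks $\{1,j\}$ is also incomplete.

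The missing idea is that the third line must be normal ordered. In \eqref{eq:marked-recursion}, $P(z+w_j)$ stands to the left of $\scrC_{N-2}(\widehat{\mathbf w}_j)$, not at $j$'s position, and it does not commute with the blocks there. By $[P(X),U(Y)]=XYP(X+Y)$ (Corollary~\ref{cor:P-U-T-U}) and $[P(X),E_C]=0$, one has $P(X)A_C^{\vee,r}=A_C^{\vee,r}P(X)+Xw_{b(C)}T_C\,P(X+W_C)$. Moving $P(z+w_j)$ past the blocks with maximal label $<j$, the second term absorbs those blocks into the marked point block. Hence only the summand $j=b$ can produce a fixed $B$. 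For a fixed exterior partition, exactly the blocks of a partition $\pi$ of $L=B\setminus\{1,b\}$ must be absorbed. The sum over $\pi$ is evaluated by the Cayley-type identity of Lemma~\ref{lem:point-absorption}, giving $x\prod_{i\in L}w_i\,W_B^{|L|-1}P(W_B)$. Multiplying by $K_E(z,w_b)=\frac{zw_b^2}{r}\log\frac{x}{w_b}$ reproduces exactly the shortfall above.

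Your plan leaves both the shortfall and these commutator terms unaccounted for. It proposes to settle the ordering by ``the convention implicit in the statement'', but that ordering is precisely where the remaining content of the theorem lies. (Minor: the block containing $1$ can be first without being a singleton, e.g.\ $B=\{1,2\}$ when $N=2$; this does not affect your case split.)
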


\begin{rmk}
All expressions involving composite variables are interpreted in the global
iterated Laurent expansion corresponding to
\[
  |z|\ll |w_2|\ll\cdots\ll |w_N|.
\]
In particular, when \(P(z+w_j)\) is moved to its canonical position, the
commutator terms are expanded in this order.
\end{rmk}

\begin{proof}[Proof of Theorem \ref{thm:marked-recursion}]
Let
\begin{equation}
	K_I(z,w)=\frac{zw^2}{z+w},\qquad K_E(z,w)=\frac{zw^2}{r}\log\left(1+\frac zw\right).
	\label{eq:KI-KE}
\end{equation}
We first describe the two types of summands on the RHS of \eqref{eq:marked-recursion}.  A term involving \(K_I(z,w_j)\) will be
called a \(K_I\)-contraction term: it contracts the marked label \(1\) with
the label \(j\) by replacing \(w_j\) with \(z+w_j\) in the block containing
\(j\).  A term involving \(K_E(z,w_j)\) will be called a
\(K_E\)-absorption term: it first creates the point operator
\(P(z+w_j)\), which may subsequently absorb other blocks through the
point--identity commutator \eqref{eq:P-U-comm}.

Expand the LHS of \eqref{eq:marked-recursion} by ordered set partitions, and let \(B_\bullet\)
denote the block containing the marked label \(1\).  If \(B_\bullet=\{1\}\), it is the first block with respect to the increasing-maximal order and
\[
  A_{\{1\}}^{\vee,r}=U(z).
\]
Summing over the remaining blocks gives
\[
  U(z)\scrC_{N-1}(w_2,\ldots,w_N),
\]
which is the first line on the RHS of \eqref{eq:marked-recursion}.

It remains to reconstruct a fixed nonsingleton marked block
\(B=B_\bullet\).  Set
\[
  b=b(B),\qquad W=W_B.
\]
Fix a partition \(\rho\) of the exterior labels \([N]\setminus B\), and
write \(\rho_{<b}\) and \(\rho_{>b}\) for the blocks of \(\rho\) whose maximal
labels are, respectively, smaller and larger than \(b\).  Put
\[
  A_{\rho,<b}
  =
  \overrightarrow{\prod_{C\in\rho_{<b}}}A_C^{\vee,r},
  \qquad
  A_{\rho,>b}
  =
  \overrightarrow{\prod_{C\in\rho_{>b}}}A_C^{\vee,r}.
\]
The target term associated with \((B,\rho)\) is
\begin{equation}\label{eq:target}
  A_{\rho,<b}\,A_B^{\vee,r}\,A_{\rho,>b}.
\end{equation}
We will reconstruct identity and point parts of \(A_B^{\vee,r}\) while keeping $A_{\rho,<b},\,A_{\rho,>b}$ unchanged.

For \(j\in B\setminus\{1\}\), let \(B^{(j)}\) be the block obtained by deleting
the label \(1\) and replacing the variable \(w_j\) by \(z+w_j\).  This
contraction preserves both the total block variable \(W\) and the 
position of the block.

\smallskip
\noindent\emph{Identity part.}
Direct substitution into the definition of \(I_B\) gives
\[
  K_I(z,w_j)I_{B^{(j)}}
  =
  \begin{cases}
    \dfrac{w_j}{W}I_B, & j\ne b,\\[6pt]
    \dfrac{z+w_b}{W}I_B, & j=b.
  \end{cases}
\]
Therefore
\begin{align}
  \sum_{j\in B\setminus\{1\}}
  K_I(z,w_j)I_{B^{(j)}}
  &=
  \left(
    \sum_{\substack{j\in B\setminus\{1,b\}}}\frac{w_j}{W}
    +\frac{z+w_b}{W}
  \right)I_B
  \notag\\
  &=I_B. 
  \label{eq:identity-output-splitting}
\end{align} 

After inserting \eqref{eq:identity-output-splitting} between
\(A_{\rho,<b}\) and \(A_{\rho,>b}\), these give the identity part of 
\eqref{eq:target}.

\smallskip
\noindent\emph{Point part.}
For the fixed block \(B\), put
\[
  x=z+w_b,
  \qquad
  L=B\setminus\{1,b\}.
\]
Then
\begin{equation}
  W=x+\sum_{i\in L}w_i,
  \qquad
  E_B
  =
  \frac{w_bT_B}{r}\log\frac{W}{w_b}\,P(W).
  \label{eq:fixed-point-block}
\end{equation}

We begin with the \(K_I\)-contraction terms.  If \(j\in L\), the terminal
label remains \(b\), and direct substitution gives
\begin{equation}
  K_I(z,w_j)E_{B^{(j)}}
  =
  \frac{w_j}{W}E_B.
  \label{eq:nonterminal-point-contraction}
\end{equation}
For \(j=b\), the terminal variable changes from \(w_b\) to \(x\), so
\begin{equation}
  K_I(z,w_b)E_{B^{(b)}}
  =
  \frac xW\,\frac{w_bT_B}r
  \log\frac{W}{x}\,P(W).
  \label{eq:terminal-KI}
\end{equation}
Thus the \(j=b\) contraction accounts for the first term in
\begin{equation}
  \log\frac{W}{w_b}
  =
  \log\frac{W}{x}
  +
  \log\frac{x}{w_b}.
  \label{eq:log-split}
\end{equation}
 We now show that the
missing second term is supplied by a \(K_E\)-absorption term.

A \(K_E\)-absorption term indexed by \(j\) starts with the marked point
operator \(P(z+w_j)\).  To place it in increasing-maximal order, it is moved
past precisely those blocks \(C\) with \(b(C)<j\).  Whenever the second term on the RHS
of 
\begin{equation}
  P(X)A_C^{\vee,r}
  =
  A_C^{\vee,r}P(X)
  {}+Xw_{b(C)}T_C\,P(X+W_C)
  \label{eq:commute-through-block}
\end{equation}
is chosen, \(C\) is absorbed and the point
variable changes from \(X\) to \(X+W_C\). Here, \([P(X),E_C]=0\), so the second term in
\eqref{eq:commute-through-block} comes from the commutator \eqref{eq:P-U-comm} with \(I_C\). All absorbed blocks have maximal
label smaller than \(j\); hence the resulting marked block still has maximal
label \(j\).  Since the fixed block \(B\) has maximal label \(b\), it can be
produced only by the summand indexed by \(j=b\):
\begin{equation}
  K_E(z,w_b)P(x)
  \scrC_{N-2}(\widehat{\boldsymbol w}_b).
  \label{eq:relevant-KE-term}
\end{equation}

We compute the part of \eqref{eq:relevant-KE-term} with the final marked block being precisely \(B\) and the exterior partition fixed to \(\rho\). Let
\[
  \pi=(C_1,\ldots,C_{q(\pi)})
\]
be the partition of \(L\), with its blocks in increasing order of their
maximal labels.  Exterior blocks whose maxima are smaller than \(b\) are
commuted through using the first term of \eqref{eq:commute-through-block}; exterior
blocks whose maxima are larger than \(b\) already lie after the marked block.
Consequently, all exterior blocks retain their relative order.

To obtain \(B\), every block \(C_\alpha\) of \(\pi\) must instead be absorbed
using the second term of \eqref{eq:commute-through-block}.  
The contribution associated with \(\pi\) is therefore
\begin{equation}
  \begin{gathered}
    A_{\rho,<b}\,\Xi_\pi\,A_{\rho,>b},\\
    \Xi_\pi
    :=
    \left(
      \prod_{\alpha=1}^{q(\pi)}
      X_{\alpha-1}w_{b(C_\alpha)}T_{C_\alpha}
    \right)P(W).
  \end{gathered}
  \label{eq:local-absorption-contribution}
\end{equation}
Summing
\eqref{eq:local-absorption-contribution} over all partitions of \(L\) and
applying Lemma~\ref{lem:point-absorption} gives
\begin{equation}
  A_{\rho,<b}
  \left(x\prod_{i\in L}w_i\right)W^{|L|-1}P(W)
  A_{\rho,>b}.
  \label{eq:absorbed-point-sum}
\end{equation}

Finally,
\[
  K_E(z,w_b)
  =
  \frac{zw_b^2}{r}\log\frac{x}{w_b},
  \qquad
  T_B
  =
  zw_b\left(\prod_{i\in L}w_i\right)W^{|L|}.
\]
Multiplying the middle factor in \eqref{eq:absorbed-point-sum} by
\(K_E(z,w_b)\) therefore yields
\begin{equation}
  \frac{x}{W}\,
  \frac{w_bT_B}{r}
  \log\frac{x}{w_b}\,P(W).
  \label{eq:terminal-KE}
\end{equation}
This is precisely the part missing from \eqref{eq:terminal-KI}.  By
\eqref{eq:log-split}, the \(K_I\)-contraction and \(K_E\)-absorption terms
indexed by \(b\) combine to give
\[
  \frac{x}{W}E_B.
\]
Adding the $K_I$-contractions \eqref{eq:nonterminal-point-contraction} indexed by
\(j\in L\), we recover
\begin{align*}
  \frac{x}{W}E_B
  +
  \sum_{j\in L}\frac{w_j}{W}E_B
  &=
  \frac{x+\sum_{j\in L}w_j}{W}E_B\\
  &=E_B.
\end{align*}
Thus the \(K_I\)-contraction and \(K_E\)-absorption terms reconstruct the point part of \eqref{eq:target}.

We have now reconstructed both
\(A_{\rho,<b}I_BA_{\rho,>b}\) and
\(A_{\rho,<b}E_BA_{\rho,>b}\) for every nonsingleton marked block \(B\) and
every exterior partition \(\rho\), while the singleton marked block gives the
first line of \eqref{eq:marked-recursion}. Hence every term on the LHS of \eqref{eq:marked-recursion} is obtained, and the marked identity recursion follows.
\end{proof}

\begin{cor}\label{cor:class-I-reactions}
For \(k\geq2\) and \(\ell\geq0\), the second and third lines of
\eqref{eq:marked-recursion} give
\[
  \tau_k(\id)\tau_\ell(\id)
  \longmapsto
  \frac{(\ell)_k}{k!}\tau_{k+\ell-1}(\id)
\]
and
\[
  \tau_k(\id)\tau_\ell(\id)
  \longmapsto
  \frac1{rk!}
  \left.\frac{d}{ds}(s)_k\right|_{s=\ell}
  \tau_{k+\ell-2}(\omega).
\]
\end{cor}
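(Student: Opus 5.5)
The plan is to extract coefficients from the second and third lines of \eqref{eq:marked-recursion}, pairing the marked variable $z=w_1$ with a fixed $w_j=w$. In the generating series, the insertion $\tau_k(\id)$ at the marked label corresponds to $[z^{k+1}]$, and $\tau_\ell(\id)$ at label $j$ corresponds to $[w^{\ell+1}]$. In the stable non-equivariant formula (Theorem~\ref{thm:noneq-cap}), an identity insertion with variable $v$ enters through $\scrC$. A point insertion with variable $v$ enters through $P(v)$, i.e.\ through $\tau_\ell(\omega)\leftrightarrow [v^{\ell+1}]P(v)$. The task is therefore to rewrite each correction term as a coefficient of a modified $\scrC$ or $P$ series in a single new variable. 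The expansions follow the convention $|z|\ll|w|$.

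\emph{Second line.} The term $\frac{zw^2}{z+w}\,\scrC_{N-1}(\ldots,z+w,\ldots)$ depends on $w$ only through $F(z+w)$, where $F(v)=\sum_{n}c_n v^{n+1}$ and $c_n$ is the coefficient representing $\tau_n(\id)$ in slot $j$. First expand
\[
\frac{zw^2}{z+w}(z+w)^{n+1}=zw^2\sum_{i\ge0}\binom{n}{i}z^iw^{n-i}.
\]
Then take $[z^{k+1}w^{\ell+1}]$. This forces $i=k$ and $n-k+2=\ell+1$, so $n=k+\ell-1$. The coefficient is $\binom{k+\ell-1}{k}=\frac{(\ell)_k}{k!}$, which gives the first reaction. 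One should check that for $k\ge2$ the positive-part projection does not interfere; it does not, since all exponents involved are positive.

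\emph{Third line.} The term $\frac{zw^2}{r}\log(1+z/w)\,P(z+w)\,\scrC_{N-2}$ replaces both labels by a single point insertion $P(v)=\sum_n P_n v^{n+1}$ with $v=z+w$, so it represents $\tau_n(\omega)$. The needed identity is
\[
[z^{k+1}w^{\ell+1}]\;zw^2\log\Bigl(1+\frac zw\Bigr)(z+w)^{n+1}.
\]
To compute it, write $\log(1+z/w)(1+z/w)^{n+1}=\partial_s(1+z/w)^{s}|_{s=n+1}$. The coefficient of $(z/w)^k$ is then $\partial_s\binom{s}{k}|_{s=n+1}$. Matching the powers of $w$ gives $n+1+2-k=\ell+1$, so $n=k+\ell-2$. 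Finally,
\[
\partial_s\binom{s}{k}\Big|_{s=k+\ell-1}=\frac{1}{k!}\frac{d}{ds}(s)_k\Big|_{s=\ell},
\]
using $\binom{s'}{k}=\frac{(s'-k+1)_k}{k!}$ with $s'=s+k-1$, which produces the factor $\frac1r$ in the second reaction.

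The main obstacle is justifying that the third line really corresponds to an insertion of $\tau_{k+\ell-2}(\omega)$ in the stable series. The operator $P(z+w)$ sits at the position of label $j$, not where the point operators of Theorem~\ref{thm:noneq-cap} stand. I would handle this by noting that point operators $P$ commute with the $T_a$ and $P$ factors on the left, since by \eqref{eq:johnson-commutator-support} the $t^0$ parts commute. The remaining commutator terms with the blocks of $\scrC$ arise exactly through \eqref{eq:commute-through-block}. These are already accounted for in the recursion as absorption, so the term is the $\scrC_{N-2}$ series with one extra point insertion. With that identification, the corollary reduces to the two coefficient computations above.
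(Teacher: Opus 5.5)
Your proposal is correct and is essentially the paper's proof. Both extract $[z^{k+1}w^{\ell+1}]$ from $\frac{zw^2}{z+w}(z+w)^{n+1}$ and from $\frac{zw^2}{r}\log(1+\frac zw)(z+w)^{n+1}$. The first gives $\binom{k+\ell-1}{k}=\frac{(\ell)_k}{k!}$ with $n=k+\ell-1$; the second gives $\frac1r\partial_s\binom{s}{k}\big|_{s=k+\ell-1}=\frac{1}{rk!}\frac{d}{ds}(s)_k\big|_{s=\ell}$ with $n=k+\ell-2$. Your generic $F(v)$ in the second line is slightly more careful than the paper's shorthand $U(z+w)$.

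The ``obstacle'' you raise is lighter than you suggest. In \eqref{eq:marked-recursion}, $P(z+w_j)$ already stands to the left of $\scrC_{N-2}(\widehat{\mathbf w}_j)$, which is exactly where the point operators sit in Theorem~\ref{thm:noneq-cap}. So no absorption bookkeeping is needed; you only use that $P$ commutes with the $P$- and $T_a$-factors.
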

\begin{proof}
By Theorem~\ref{thm:noneq-cap} and~\ref{thm:marked-recursion}, the second
line merges the marked identity insertion with one other identity insertion
and again produces an identity insertion.  Its coefficient is
\[
  [z^{k+1}w^{\ell+1}]
  \frac{zw^2}{z+w}U(z+w)
  =
  \binom{k+\ell-1}{k}U_{k+\ell-1}
  =
  \frac{(\ell)_k}{k!}U_{k+\ell-1}.
\]

The third line produces a point insertion.  Setting \(x=z/w\) gives
\begin{align*}
  &[z^{k+1}w^{\ell+1}]
  \frac{zw^2}{r}
  \log\left(1+\frac zw\right)P(z+w)\\
  &\qquad=
  \frac1r[x^k](1+x)^{k+\ell-1}\log(1+x)\,
  P_{k+\ell-2}\\
  &\qquad=
  \frac1r
  \left.
    \frac{d}{ds}\binom{k+\ell-1+s}{k}
  \right|_{s=0}
  P_{k+\ell-2}\\
  &\qquad=
  \frac1{rk!}
  \left.\frac{d}{ds}(s)_k\right|_{s=\ell}
  P_{k+\ell-2}.
\end{align*}
For \(\ell\geq1\),
\[
  \left.\frac{d}{ds}(s)_k\right|_{s=\ell}
  =
  (\ell)_k\sum_{q=\ell}^{k+\ell-1}\frac1q,
\]
whereas at \(\ell=0\) it equals \((k-1)!\).  Thus the polynomial derivative
also covers the boundary case \(\ell=0\).
\end{proof}

\section{First orbifold cap proof}\label{sec:proof}

For a relative condition \(\nu\) at \(\infty\), we write
\[
  Z^{\OrbCap_r}[\nu]
\]
for the disconnected descendant potential of
\(\OrbCap_r=(\PP^1_{r,1},\infty)\).

\begin{thm}\label{thm:cap-virasoro}
For every \(r\ge1\) and every relative condition \(\nu\),
\[
  L_kZ^{\OrbCap_r}[\nu]=0,
  \qquad k\ge -1.
\]
\end{thm}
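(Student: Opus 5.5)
The plan is to handle $k=-1,0$ separately and to treat $k\ge1$ uniformly through the marked identity insertion. For $L_{-1}$ (the string equation) and $L_0$ (dilaton plus degree/dimension homogeneity) I will verify the constraints directly for the orbifold cap. The string and dilaton equations hold for coarse descendants of relative orbifold stable maps, with the corrections coming from unstable contributions. The $\frac{1}{2r}x^{a}_0x^{a^\vee}_0$ term in $L_{-1}$ is the unstable genus-zero two-point contribution, and $c_0=\frac{r^2-1}{24r}$ in $L_0$ is the unstable genus-one constant. Both can be read off from the $t$-free vacuum expectations in Theorem~\ref{thm:noneq-cap}, or equivalently from \eqref{eq:U1-vacuum} with $k=1$. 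The $L_0$ constraint then amounts to the virtual dimension formula on $\OrbCap_r$ with $\chi_{\log}=1+\frac1r$, together with the divisor/degree equation for $\omega$.

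For $k\ge1$, I will show $L_{k}Z=0$ coefficientwise, in the form $\Delta_{k,d}(\mathbf B\mid\nu)=0$. The leading term $-(k+1)!\,\partial/\partial t^0_{k+1}$ adds an insertion $\tau_{k+1}(\id)$, and I will make that insertion the marked label $w_1=z$ in Theorem~\ref{thm:noneq-cap}. Expanding $\scrC_N$ by Theorem~\ref{thm:marked-recursion}, the second and third lines produce exactly the two Class~I reactions by Corollary~\ref{cor:class-I-reactions}, with $k$ replaced by $k+1$. These match the terms $(\ell)_{k+1}t^0_\ell\partial_{t^0_{\ell+k}}$ and $\chi_{\log}(\ell)_{k+1}\sum\frac1j\,t^0_\ell\partial_{t^1_{\ell+k-1}}$ of $L_k$. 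Here I use that $\chi_{\log}=1/r+1$, and the additional ``$+1$'' part will have to come from the next step.

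The remaining first line, $U(z)\scrC_{N-1}$, contributes $U_{k+1}$ placed after all $T_{a_j}$ and $P$ factors. I will move $U_{k+1}$ to the left vacuum using Corollary~\ref{cor:P-U-T-U}. Commuting past $T_{p,a}$ gives $\frac{(p+a/r)_{k+1}}{(k+1)!}T_{p+k,a}$, which is the reaction $\id+\phi_a\to\phi_a$. Commuting past $P_l$ gives $\frac{(l+1)_{k+1}}{(k+1)!}P_{l+k}$, which is the reaction $\id+\omega\to\omega$. Once $U_{k+1}$ reaches $\langle 0|$, Proposition~\ref{prop:vacuum-removal} replaces it by $R_{k+1}$. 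The three terms of $R_{k+1}$ give the reactions $\id\to\omega$ with coefficient $-h_{k+1}/r$, $\id\to\omega+\omega$, and $\id\to\phi_a+\phi_{a^\vee}$. These match the constant $\partial_{t^1_k}$ term (the $1/r$ part of $\chi_{\log}$) and the two quadratic derivative terms of $L_k$. The issue is that the coefficient of $\omega\omega$ is only $\frac1r$ rather than $\chi_{\log}=1+\frac1r$.

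I expect the main bookkeeping obstacle to be the missing ``ordinary'' parts proportional to $1$ in $\chi_{\log}$. These are the terms in $-h\,\partial_{t^1}$, in the $t^0\partial_{t^1}$ harmonic term, and in $\partial^2_{t^1t^1}$, together with the $\ell$ versus $\ell+1$ shift in $\omega$. The plan is to account for them using the ordinary relative cap, i.e.\ the case $r=1$, which is known from \cite{OP3}. Concretely, I will compare the operator formula at general $r$ with its $r$-independent pieces: the factors $e^{\alpha_r/r}$, $|\nu\rangle$, and the block structure. I will then check that the $\chi_{\log}$-linear terms split as $\frac1r(\ldots)+1\cdot(\ldots)$, where the second part arises from the relative side via $[\alpha_{-\nu},\cdot]$ commutators exactly as in \cite{OP3}. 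If this splitting proves awkward, the fallback is to check the full identity directly. That means collecting all contributions after vacuum removal and verifying that the residual, $\langle0|(\cdots)e^{\alpha_r/r}|\nu\rangle$ with $U_{k+1}$ gone, cancels termwise against the Virasoro table. The $r$-dependence of the table is precisely that of $R_k$ and of Corollary~\ref{cor:class-I-reactions}, so this should be a finite check of coefficients. Signs and the unstable low-degree terms, handled by $[\cdot]_+$, need separate care when $k$ is small.
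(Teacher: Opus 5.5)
Your treatment of $k\ge1$ follows the paper's proof step for step. It uses the marked identity recursion and Corollary~\ref{cor:class-I-reactions} with $k$ replaced by $k+1$, then the commutators of Corollary~\ref{cor:P-U-T-U}, then Proposition~\ref{prop:vacuum-removal}, with Lemma~\ref{lem:nonnegative-stationary-commute} to restore the $P$- and $T$-factors of $R_{k+1}$ to the order of \eqref{eq:non-eq-cap}. However, a miscomputed Euler characteristic derails it. The relative point counts: $\chi_{\log}(\OrbCap_r)=2-2\cdot0-|Q|-\bigl(1-\tfrac1r\bigr)=2-1-\bigl(1-\tfrac1r\bigr)=\tfrac1r$. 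The value $1+\tfrac1r$ you use is the absolute orbifold Euler characteristic of $\PP^1_{r,1}$. Equivalently, in the virtual dimension the term $d(1+\tfrac1r)$ from $c_1(T\PP^1_{r,1})$ is cut down to $\tfrac dr$ by the $-|\nu|=-d$ coming from the contact orders; this also corrects the dimension count in your $L_0$ paragraph. With $\chi_{\log}=\tfrac1r$, every coefficient your computation produces already matches $L_k$ exactly. The two Class~I reactions, the linear term $-\tfrac{h_{k+1}}{r}P_k$ and the untwisted quadratic term of $R_{k+1}$ all carry precisely the factor $\tfrac1r=\chi_{\log}$. Also, $[P_l,U_{k+1}]=\tfrac{(l+1)_{k+1}}{(k+1)!}P_{l+k}$ is exactly the term $(\ell+1)_{k+1}t^1_\ell\,\partial/\partial t^1_{\ell+k}$, so there is no $\ell$ versus $\ell+1$ shift to explain.

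Consequently the ``main bookkeeping obstacle'' in your last paragraph does not exist, and the remedy you propose cannot work. Since $U_{k+1}$ is moved to the \emph{left} vacuum, the factor $e^{\alpha_r/r}|\nu\rangle$ is never touched. No commutators with the $\alpha_{-\nu_i}$ arise, and there is no further contribution to collect. Moreover, the identity you would be aiming at is false. The versions of $L_0$ with $\chi_{\log}=\tfrac1r$ and $\chi_{\log}=1+\tfrac1r$ differ by $-\partial/\partial t^1_0+\sum_{\ell\ge0}t^0_{\ell+1}\partial/\partial t^1_\ell+\tfrac12(t^0_0)^2$. Its constant term on $Z^{\OrbCap_r}_0[\varnothing]$ is $-\langle\tau_0(\omega)\,|\,\varnothing\rangle^{\bullet,\OrbCap_r}_{0}=\tfrac1{24}\neq0$ (the genus-one, degree-zero contribution), so both versions cannot annihilate $Z$. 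The same holds for every $L_k$ with $k\ge0$, because $L_{-1}$ does not involve $\chi_{\log}$ and $[L_k,L_{-1}]=(k+1)L_{k-1}$ for any value of the parameter $\chi_{\log}$. So drop the last paragraph and use $\chi_{\log}=\tfrac1r$; the argument for $k\ge1$ is then complete.

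For $L_0$, a direct dilaton/divisor/dimension check forces you to track $\tfrac{\chi_{\log}}2(t^0_0)^2$ and $c_0$ by hand. These cannot be read off from Theorem~\ref{thm:noneq-cap}, whose $[\,\cdot\,]_+$ discards exactly the unstable contributions. The paper instead uses $[L_1,L_{-1}]=2L_0$ once the string equation and the $L_1$ case are in hand.
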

\begin{proof}
The logarithmic Euler characteristic 
\[
  \chi_{\log}(\OrbCap_r)
  =
  2-1-\left(1-\frac1r\right)
  =
  \frac1r.
\]
We prove the constraint coefficientwise in the descendant variables.  The
string equation is exactly \(L_{-1}Z^{\OrbCap_r}[\nu]=0\).

Fix \(k\ge2\).  The leading term of \(L_{k-1}\) is
\[
  -k!\frac{\partial}{\partial t_k^0}.
\]
Consequently, the coefficientwise equation
\(L_{k-1}Z^{\OrbCap_r}[\nu]=0\) is equivalent, after moving this leading
derivative to the other side and dividing by \(k!\), to the seven-reaction
identity in Section~\ref{sec:reaction-marked}.  We now prove that identity
from the orbifold cap operator formula.

Apply Theorem~\ref{thm:marked-recursion} to the identity factors in
\eqref{eq:non-eq-cap}, with the variable \(z\) assigned to the marked
insertion.  Corollary~\ref{cor:class-I-reactions} identifies the second and
third lines of the recursion with
\[
  \id+\id\longrightarrow\id,
  \qquad
  \id+\id\longrightarrow\omega.
\]
It remains to analyze the first line, in which the marked insertion survives
as \(U_k\).

Let \(S_1,\ldots,S_M\) be the coefficient operators lying to the left of
this \(U_k\).  Each \(S_j\) is either \(P_\ell\) or \(T_{\ell,a}\).  Moving
\(U_k\) to the left gives the identity
\begin{equation}\label{eq:stationary-telescope}
  S_1\cdots S_MU_k
  =
  U_kS_1\cdots S_M
  +
  \sum_{j=1}^{M}
  S_1\cdots S_{j-1}[S_j,U_k]S_{j+1}\cdots S_M.
\end{equation}
By Corollary~\ref{cor:P-U-T-U}, the commutators in
\eqref{eq:stationary-telescope} give, with the coefficients displayed in
the reaction table,
\[
  \id+\omega\longrightarrow\omega,
  \qquad
  \id+\phi_a\longrightarrow\phi_a.
\]
The first term on the right of \eqref{eq:stationary-telescope} places
\(U_k\) next to the left vacuum.  Proposition~\ref{prop:vacuum-removal}
then replaces it by \(R_k\).  Every coefficient index occurring in \(R_k\)
is nonnegative, so Lemma~\ref{lem:nonnegative-stationary-commute} allows
the new \(P\)- and \(T\)-factors to be restored to the order prescribed in
\eqref{eq:non-eq-cap}, without producing further commutators.  The three
summands of \(R_k\) therefore give
\[
  \id\longrightarrow\omega,
  \qquad
  \id\longrightarrow\omega+\omega,
  \qquad
  \id\longrightarrow\phi_a+\phi_{a^\vee}.
\]
The factor \(1/r=\chi_{\log}(\OrbCap_r)\) agrees with the corresponding
coefficients of \(L_{k-1}\).  Hence the operator formula reproduces every
term of \(L_{k-1}\), with no additional contribution, and therefore
\[
  L_{k-1}Z^{\OrbCap_r}[\nu]=0
  \qquad(k\ge2).
\]
In particular, the case \(k=2\) gives the \(L_1\)-constraint.  Since the
Virasoro operators satisfy
\[
  [L_1,L_{-1}]=2L_0,
\]
the \(L_{-1}\)- and \(L_1\)-constraints imply the \(L_0\)-constraint.
This proves the theorem.
\end{proof}


{\large \part{Second orbifold cap proof: degeneration and cap inversion}\label{part:cap-inversion-proof}}

\section{Second orbifold cap proof}
\label{sec:cap-inversion}

In this section, we provide another orbifold cap proof (Theorem \ref{thm:orbifold-cap2}). We approach this result by cap reversion (Theorem \ref{thm:cap-inversion}).

Let
\[
\mathrm{Cap}=(\PP^1,\{p\})
\]
be the ordinary relative cap. The cap reversion (Theorem \ref{thm:cap-inversion}) relies on a full-column-rank property of $\mathrm{Cap}$, which was pointed out and used in the proof of \cite[Proposition~1.3]{OP3}. We provide the proof here for convenience of readers.

\begin{lem}
\label{lem:stationary-cap-rank}
For \(d\ge0\), let $M_d$ be a matrix defined by
\[
M_d=
\left(
\left\langle
\mathbf L
\ \middle|\
\lambda
\right\rangle^{\bullet,\,\mathrm{Cap}}_d
\right)_{\mathbf L,\,\lambda\vdash d},
\]
where the rows are indexed by ordered absolute stationary insertion lists
\[
\mathbf L=\begin{cases}
\bigl(
\tau_{\ell_1}(\omega),\ldots,\tau_{\ell_s}(\omega)
\bigr),&s\geq0,\\
\varnothing,&s=0.
\end{cases}
\]
Then the matrix \(M_d\) has full column rank.
\end{lem}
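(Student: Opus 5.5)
The plan is to use the Okounkov--Pandharipande operator formula for the ordinary cap, which is the case $r=1$ of Theorem~\ref{thm:noneq-cap} with no twisted and no identity insertions. For $r=1$ the operator $P(z)=[t^0]\widehat A_0(z,t)$ is
\[
\mathcal A(z)=\Scal(z)^{0}\sum_{m}\frac{(z\Scal(z))^m}{m!}\Ecal_m(z).
\]
Its left-vacuum action kills the terms with $m<0$, since $\langle0|\Ecal_m=0$ for $m<0$ (energy considerations). Therefore
\[
\langle \mathbf L\,|\,\lambda\rangle^{\bullet,\mathrm{Cap}}_d
=\Bigl[\bigl\langle \textstyle\prod_i P(z_i)\,e^{\alpha_1}\big|\lambda\bigr\rangle\Bigr]_{\text{coeff}}.
\]
Moreover $e^{\alpha_1}$ acting to the left on a degree-$d$ covector gives $\langle0|\alpha_1^d/d!$ composed with $\alpha_{-\lambda}$-states. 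Put differently, the stationary invariants are
\[
\frac{1}{d!}\sum_{\mu\vdash d}\dim\mu\,\frac{\chi^\mu(\lambda)}{\mathfrak z(\lambda)}\cdot(\text{stationary GW/H weights } \mathbf p_{\ell}(\mu)).
\]
This is the GW/H correspondence: $\tau_\ell(\omega)$ corresponds to the completed cycle, whose eigenvalue on $v_\mu$ is $\mathbf p_{\ell+1}(\mu)/(\ell+1)!$, a shifted symmetric power sum.

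\textbf{Step 1.} Write $M_d=N_d\cdot X_d$. The rows of $N_d$ are indexed by $\mathbf L$ and its columns by $\mu\vdash d$, with entries
\[
N_d(\mathbf L,\mu)=\prod_{i}\frac{\mathbf p_{\ell_i+1}(\mu)}{(\ell_i+1)!}\cdot\frac{\dim\mu}{d!}.
\]
The matrix $X_d$ is the character table, with entries $X_d(\mu,\lambda)=\chi^\mu(\lambda)/\mathfrak z(\lambda)$. I would derive this factorization from the diagonalization of $\Ecal_0(z)$ on $v_\mu$ together with $\langle0|e^{\alpha_1}$ projecting onto $\sum_\mu \frac{\dim\mu}{d!}\langle v_\mu|$. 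Alternatively, I would simply cite the Okounkov--Pandharipande formula for the relative cap.

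\textbf{Step 2.} The character table $X_d$ is invertible, and $\dim\mu\ne0$. It therefore suffices to show that the matrix $\bigl(\prod_i \mathbf p_{\ell_i+1}(\mu)\bigr)_{\mathbf L,\mu}$ has full column rank, i.e. that the functions $\mu\mapsto$ monomials in $\mathbf p_k(\mu)$ separate the partitions of $d$.

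\textbf{Step 3.} The shifted power sums $\mathbf p_k$, $k\ge1$, generate the algebra of shifted symmetric functions. That algebra separates partitions: the values $\mu_i-i+\tfrac12$ determine $\mu$, and the generating function $\sum_i e^{z(\mu_i-i+1/2)}$ is determined by all the $\mathbf p_k(\mu)$. So for distinct $\mu\ne\mu'$ some $\mathbf p_k$ takes different values on them. On the finite set of partitions of $d$, polynomials in separating functions span all functions, by Lagrange interpolation with products $\prod(\mathbf p_k-c)/(c'-c)$. Hence the row space is everything, and the column rank equals the number of partitions of $d$.

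The main obstacle is Step 1: justifying cleanly the diagonal form and the factor $\dim\mu/d!$ from the operator formula with correct normalizations, including the $\Scal$ factors and the constant term in $\Ecal_0$. Those normalizations affect only nonzero scalars per row or column, and I would track them just enough to see they are nonzero.
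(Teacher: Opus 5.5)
Your proposal is correct and follows essentially the paper's route: the paper also factors \(M_d=A_dD_dH_d\) via the Okounkov--Pandharipande GW/H correspondence, and then uses invertibility of the character table to reduce to \(\rank A_d=p(d)\). The paper cites \cite[Theorem~1]{OP1} directly instead of re-deriving the factorization from the operator formula, which avoids the normalization bookkeeping you identified as the main obstacle.

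The only real difference is the final spanning step. The paper argues that each \(\mathbf f_\lambda\) lies in \(\Lambda^*=\mathbb Q[\mathbf p_1,\mathbf p_2,\ldots]\) and that the restrictions \(\mathbf f_\lambda|_{\mathcal P(d)}\) form a basis. Your argument, that the \(\mathbf p_k\) separate partitions and Lagrange interpolation then spans all functions, is equally valid. It also handles \(d=0\) uniformly through the row \(\mathbf L=\varnothing\), whereas the paper treats \(d=0\) separately using \(\langle\tau_0(\omega)\rangle_{1,0}=-1/24\).
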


\begin{proof}
For $d=0$, the matrix \(M_0\) has only one column, and it has one entry
\[
\left\langle
\tau_0(\omega)
\ \middle|\
\varnothing
\right\rangle^{\bullet,\,\mathrm{Cap}}_0
=
\left\langle
\tau_0(\omega)
\ \middle|\
\varnothing
\right\rangle^{\mathrm{Cap}}_{g=1,d=0}
=
-\frac{1}{24}\neq 0.
\]
This proves that \(M_0\) has full column rank.

Now suppose that $d>0$. Let \(\mathcal P(d)\) be the set of unordered partitions of \(d\), so that
\(|\mathcal P(d)|=p(d)\).  From \cite[Theorem~1]{OP1}, we have
\begin{align}\label{formula-ordinarycapinvariant}
\left\langle
\mathbf L
\ \middle|\
\lambda
\right\rangle^{\bullet,\,\mathrm{Cap}}_d
=
\sum_{\rho\vdash d}
\left(\frac{\dim\rho}{d!}\right)^2
\prod_{i=1}^{s}
\frac{\mathbf p_{\ell_i+1}(\rho)}{(\ell_i+1)!}\,
\mathbf f_\lambda(\rho).
\end{align}
Here for $\ell>0$, the number $\mathbf p_\ell$ is defined by \[\mathbf p_{\ell}(\rho)=\sum_{i\ge1}[(\rho_i-i+\frac12)^\ell-(-i+\frac12)^\ell]+\ell!c_{\ell+1},\] with $c_{\ell+1}$ defined by 
\(\sum_{j\ge0}c_jz^j=\frac{z/2}{\sinh(z/2)}\). The number $\mathbf f_\lambda(\rho)$ is defined by
\(\mathbf f_\lambda(\rho)
=
|C_\lambda|
\frac{\chi^\rho_\lambda}{\dim\rho},
\)
where \(C_\lambda\subset S(d)\) is the conjugacy class of cycle type
\(\lambda\), and \(\chi^\rho_\lambda\) is the character of any element of $C_\lambda$ in the representation of $S(d)$ corresponding to \(\rho\).

Define matrices $A_d, D_d, H_d$ by
\[
(A_d)_{\mathbf L,\rho}
=
\prod_{i=1}^{s}
\frac{\mathbf p_{\ell_i+1}(\rho)}{(\ell_i+1)!},
\qquad
(D_d)_{\rho,\rho}
=
\left(\frac{\dim\rho}{d!}\right)^2,
\qquad
(H_d)_{\rho,\lambda}
=
\mathbf f_\lambda(\rho),
\]
where $D_d$ is diagonal. Note that \eqref{formula-ordinarycapinvariant} gives the matrix factorization
\[
M_d=A_dD_dH_d.
\]
The diagonal matrix \(D_d\) is invertible.  Note that
\[
H_d
=
\operatorname{diag}_{\rho\vdash d}
\left(\frac{1}{\dim \rho}\right)
\cdot
\left(\chi_{\lambda}^{\rho}\right)_{\rho\vdash d,\lambda\vdash d}
\cdot
\operatorname{diag}_{\lambda\vdash d}
\left(\lvert C_{\lambda}\rvert\right).
\]
Since the character table of any finite group is invertible, it follows that \(H_d\) is invertible. Consequently,
\[
\rank M_d=\rank A_d.
\]

Note that each column $\bigl(\mathbf f_\lambda(\rho)\bigr)_{\rho\vdash d}$ of $H_d$ can be viewed as in $\mathbb Q^{\mathcal P(d)}$, i.e. the space of functions of $\mathcal P(d)$. Since $H_d$ is invertible and the number of its columns is exactly $p(d)=\dim_\mathbb Q\mathbb Q^{\mathcal P(d)}$, it follows that the column vectors
\[
\bigl(\mathbf f_\lambda(\rho)\bigr)_{\rho\vdash d},
\qquad
\lambda\vdash d,
\]
form a basis of \(\mathbb Q^{\mathcal P(d)}\). Now, from \cite[\S0.4]{OP1}, \(\mathbf f_\lambda\in
\Lambda^*
=
\mathbb Q[\mathbf p_1,\mathbf p_2,\ldots]
\), and therefore, each column vector
\(
\bigl(\mathbf f_\lambda(\rho)\bigr)_{\rho\vdash d}
\)
is a $\Q$-linear combination of rows of \(A_d\). Therefore the rows of \(A_d\) span
\(\mathbb Q^{\mathcal P(d)}\), implying $\rank A_d\ge\dim_\mathbb Q\mathbb Q^{\mathcal P(d)}=p(d)$. So we get
\[
\rank M_d=\rank A_d\ge p(d).
\]
We finish the proof by noting that the number of columns of $M_d$ is $p(d)$. \end{proof}

\begin{thm}
\label{thm:cap-inversion}
 As in Section \ref{sec:degeneration}, consider the degeneration
\[
(\mathcal C,Q)
\rightsquigarrow
(\mathcal C',Q'\cup\{p\})
\cup_p
(\mathcal C_0,Q_0\cup\{p\}),
\]
in the special case $\mathcal C'=\mathcal C,Q'=Q,\mathcal C_0=\mathbb P^1,Q_0=\varnothing$. 
Fix $k\ge-1$, \(d\geq0\) and relative profiles
\(\boldsymbol{\eta}=(\eta_1,\dots,\eta_m)\) for $(\mathcal C,Q)$. 
Assume that
\[
L_{k}Z^{\cC,Q}_d[\boldsymbol\eta]=0.
\]
Then
\[
L_{k}Z^{\cC,Q\cup\{p\}}_d[\boldsymbol\eta,\lambda]=0,\quad \lambda\vdash d.
\]
\end{thm}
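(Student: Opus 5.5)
We combine the degeneration-compatibility result, Proposition~\ref{prop:virasoro-defect-degeneration}, with the full-column-rank property of the ordinary cap, Lemma~\ref{lem:stationary-cap-rank}. We degenerate $(\cC,Q)$ into $(\cC,Q\cup\{p\})\cup_p(\PP^1,\{p\})$, so that $\cC_0=\PP^1$ is the ordinary cap $\mathrm{Cap}$ and $\boldsymbol\eta_0=\varnothing$. The ordinary cap carries no orbifold points and no odd classes. Its Virasoro constraints $L_kZ^{\mathrm{Cap}}[\mu]=0$ hold for every $k\ge-1$ and every $\mu\vdash d$ by \cite{OP3}, so all defects $\Delta^{\mathrm{Cap}}_{k,d}$ vanish.

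Fix an ordered insertion list $\mathbf B'$ for $\cC$ and any stationary list $\mathbf L=(\tau_{\ell_1}(\omega),\dots,\tau_{\ell_s}(\omega))$. Form
\[
\mathbf B=\mathbf B'\cdot\mathbf L,
\]
and choose the $\omega$-representatives so that the insertions of $\mathbf L$ specialize to the cap, giving the set $T$. Those of $\mathbf B'$ specialize to $\cC$, giving $T'$. Every other class of $\mathbf B'$ specializes to $\cC'=\cC$ by the conventions of Section~\ref{sec:degeneration}.

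Apply \eqref{eq:virasoro-defect-degeneration}. The left side $\Delta^{\cC/Q}_{k,d}(\mathbf B\mid\boldsymbol\eta)$ vanishes by hypothesis, and the cap-defect terms vanish by \cite{OP3}. What remains is
\[
0=\sum_{S}\sum_{\mu\vdash d}\mathfrak z(\mu)\,\Delta^{\cC/(Q\cup\{p\})}_{k,d}\bigl(\mathbf B'(S)\mid\boldsymbol\eta,\mu\bigr)\,\bigl\langle \mathbf B_0(S)\mid\mu\bigr\rangle^{\bullet,\mathrm{Cap}}_d .
\]
Here $S$ ranges over the subsets of unit insertions in $\mathbf B'$ sent to the cap.

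The sum over $S$ is the main obstacle, because it mixes defects of different lists. I would remove it by induction on the number $u$ of unit insertions in $\mathbf B'$. If $u=0$, only $S=\varnothing$ occurs and $\mathbf B_0=\mathbf L$. The identity then reads
\[
\sum_\mu M_d(\mathbf L,\mu)\,v_\mu=0\quad\text{for all }\mathbf L,\qquad v_\mu=\mathfrak z(\mu)\,\Delta^{\cC/(Q\cup\{p\})}_{k,d}(\mathbf B'\mid\boldsymbol\eta,\mu).
\]
Since $M_d$ has full column rank by Lemma~\ref{lem:stationary-cap-rank}, $v=0$, and dividing by $\mathfrak z(\mu)\neq0$ gives the claim.

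For $u>0$, every term with $S\neq\varnothing$ involves $\mathbf B'(S)$, which has fewer unit insertions. Its defect vanishes by the inductive hypothesis, applied to all $\mu$ simultaneously. This leaves only the $S=\varnothing$ term, and the same rank argument applies.

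One subtlety needs a check. The unit insertions that go to the cap could also be handled by the string and dilaton equations on the cap. The induction avoids that, but it requires the induction statement to be quantified over all $\mathbf B'$ with $u$ units, all $\mu$, and all fixed $k$ together. Since each instance of the hypothesis holds for arbitrary $\mathbf B$, including arbitrary $\mathbf L$, the induction closes. Ordering and sign conventions for odd classes are unaffected because all odd insertions stay on $\cC$.

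Thus $\Delta^{\cC/(Q\cup\{p\})}_{k,d}(\mathbf B'\mid\boldsymbol\eta,\lambda)=0$ for all $\mathbf B'$ and all $\lambda$, which is the assertion $L_kZ^{\cC,Q\cup\{p\}}_d[\boldsymbol\eta,\lambda]=0$.
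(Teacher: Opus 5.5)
Your proposal is correct and follows essentially the same argument as the paper. You degenerate off an ordinary cap, send an arbitrary stationary list $\mathbf L$ to the cap and all other insertions to $\cC$, and apply Proposition~\ref{prop:virasoro-defect-degeneration}, where the hypothesis and the cap constraints of \cite{OP3} kill everything except the $\cC/(Q\cup\{p\})$-defect terms. You then induct on the number of unit insertions and conclude from the full column rank of $M_d$ in Lemma~\ref{lem:stationary-cap-rank}; this is exactly the paper's route.
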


\begin{proof}
It suffices to prove that, for all ordered absolute insertion list $\mathbf B$, we have
\(\Delta_{k,d}^{C,Q\cup\{p\}}
\left(
\mathbf B
\ \middle|\
\boldsymbol\eta,\lambda
\right)
=0.\) Write
\[
\mathbf B=
\prod_{i=1}^{u}\tau_{a_i}(\id)\,\mathbf B^\circ,
\]
where \(\mathbf B^\circ\) contains no unit insertion. We argue by induction on the number \(u\).

Let \(\mathbf L\) be an arbitrary ordered absolute stationary insertion list, and consider the absolute insertion for $(\mathcal C,Q)$:
\[
\prod_{i=1}^{u}\tau_{a_i}(\id)\,\mathbf L\,\mathbf B^\circ.
\]
We choose specializations of insertions to $H^*_{\CR}(\mathcal C')\times H^*_{\CR}(\cC_0)=H^*_{\CR}(\cC)\times H^*(\mathbb P^1)$ as follows.
We specialize point classes in $\mathbf L$ to $\mathcal C_0$, and all insertions in
\(\mathbf B^\circ\) to $\cC'$.

We apply Proposition~\ref{prop:virasoro-defect-degeneration}. Note that
the defect of \(\mathcal C/Q\) vanishes by hypothesis, and every defect of the
ordinary cap vanishes by \cite{OP3}. Therefore,
\begin{equation}
\label{eq:cap-inversion-triangular-system}
\begin{aligned}
0
={}&
\sum_{\lambda\vdash d}
\mathfrak z(\lambda)
\sum_{S\subset\{1,\ldots,u\}}
\Delta_{k,d}^{\mathcal C/(Q\cup\{p\})}
\left(
\prod_{i\notin S}\tau_{a_i}(\id)\,
\mathbf B^\circ
\ \middle|\
\boldsymbol{\eta},\lambda
\right)\cdot
\left\langle
\prod_{i\in S}\tau_{a_i}(\id)\,
\mathbf L
\ \middle|\
\lambda
\right\rangle^{\bullet,\,\mathrm{Cap}}_d.
\end{aligned}
\end{equation}

When \(u=0\), we get
\begin{equation*}
0=
\sum_{\lambda\vdash d}
\mathfrak z(\lambda)
\Delta_{k,d}^{\cC/(Q\cup\{p\})}
\left(
\mathbf B
\ \middle|\boldsymbol{\eta},\lambda
\right)\cdot\left\langle
\mathbf L
\ \middle|\
\lambda
\right\rangle^{\bullet,\,\mathrm{Cap}}_d
\end{equation*}
for every stationary list \(\mathbf L\). Note that \(\mathfrak z(\lambda)\neq0\) for every \(\lambda\), and therefore
multiplying each column of \(M_d\) by the corresponding
\(\mathfrak z(\lambda)\) preserves the column rank of the matrix. So by Lemma \ref{lem:stationary-cap-rank}, we have
\[
\Delta_{k,d}^{\mathcal C/(Q\cup\{p\})}
\left(
\mathbf B
\ \middle|\
\boldsymbol{\eta},\lambda
\right)
=0.
\]

Assume that $u>0$. If \(S\neq\varnothing\), the defect in
\eqref{eq:cap-inversion-triangular-system} contains
\(u-|S|<u\) absolute unit insertions and therefore vanishes by the induction
hypothesis. If $S=\varnothing$, we get
\begin{equation*}
\begin{aligned}
0
={}&
\sum_{\lambda\vdash d}
\mathfrak z(\lambda)
\Delta_{k,d}^{\mathcal C/(Q\cup\{p\})}
\left(
\prod_{i=1}^u\tau_{a_i}(\id)\,
\mathbf B^\circ
\ \middle|\
\boldsymbol{\eta},\lambda
\right)\cdot
\left\langle
\mathbf L
\ \middle|\
\lambda
\right\rangle^{\bullet,\,\mathrm{Cap}}_d.
\end{aligned}
\end{equation*}
So by Lemma \ref{lem:stationary-cap-rank} again, we have
\[\Delta_{k,d}^{\mathcal C/(Q\cup\{p\})}
\left(
\prod_{i=1}^u\tau_{a_i}(\id)\,
\mathbf B^\circ
\ \middle|\
\boldsymbol{\eta},\lambda
\right).\]

This proves the required vanishing result.
\end{proof}

Recall that
\[
\OrbCap_r=(\PP^1_{r,1},\infty),
\]
where \(0\) is the orbifold point of order \(r\) and \(\infty\) is an
ordinary point.  For \(r=1\), this is the ordinary relative cap, and the relative Virasoro holds by \cite{OP3}.
For \(r\ge2\), Milanov and Tseng proved that the quantum cohomology $\PP^1_{r,1}$ is semisimple \cite{MT1}, and consequently, its absolute Virasoro holds by the Givental-Teleman reconstruction theorem. The following theorem establishes relative Virasoro for $\OrbCap_r$ when $r\ge2$.

\begin{thm}
\label{thm:orbifold-cap2}
For every \(r\ge1\), \(\mu\vdash d\), \(k\ge-1\), we have
\[
L_{k}Z^{\OrbCap_r}_d[\mu]=0.
\]
\end{thm}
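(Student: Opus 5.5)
The plan is to apply Theorem~\ref{thm:cap-inversion} with $\cC=\PP^1_{r,1}$ and $Q=\varnothing$. For $r=1$ the statement is the relative Virasoro constraint for the ordinary cap from \cite{OP3}, so assume $r\ge2$.

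First, I would establish the absolute constraints $L_kZ^{\PP^1_{r,1}}_d=0$ for all $k\ge-1$ and $d\ge0$. By \cite{MT1} the quantum cohomology of $\PP^1_{r,1}$ is generically semisimple. The Givental--Teleman classification then expresses the total descendant potential as $\hat S^{-1}\hat\Psi\hat R\,e^{\widehat{t\tau}}\prod\mathcal T$, and the standard argument shows that such a potential is annihilated by the conjugated Virasoro operators. These are exactly the Jiang--Tseng operators for the orbifold, with $Q=\varnothing$ so that $\chi_{\log}=\chi(\PP^1_{r,1})=1+1/r$. Two points need care here. One is that the grading operator uses ages, so the Euler vector field and the $\mu$-matrix incorporate $\age(\phi_a)=a/r$. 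The other is that the conformal constant $c_0$ matches $\frac14\operatorname{str}(\frac14-\mu^2)$, which is the value $(r^2-1)/24r$ given in Section~\ref{subsec:virasoro-operators}. I would cite the orbifold version of Teleman's reconstruction rather than reprove it. Since $\PP^1_{r,1}$ has only even cohomology, no super-sign issues arise. The full generating function splits by degree because $L_k$ preserves the $q$-grading, so the constraint holds degree by degree.

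Next, I would degenerate $\PP^1_{r,1}$ at an ordinary point $p$ into $(\PP^1_{r,1},\{p\})\cup_p(\PP^1,\{p\})$. The first component is $\OrbCap_r$ and the second is the ordinary cap $\mathrm{Cap}$. This is precisely the special case of Theorem~\ref{thm:cap-inversion} with $\cC'=\cC=\PP^1_{r,1}$, $Q'=Q=\varnothing$, $\cC_0=\PP^1$ and $Q_0=\varnothing$. Its hypothesis $L_kZ^{\cC,\varnothing}_d=0$ is the absolute constraint from the first step. Its conclusion is $L_kZ^{\cC,\{p\}}_d[\lambda]=0$ for every $\lambda\vdash d$, which is the claimed constraint for $\OrbCap_r$ with $\mu=\lambda$. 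Within Theorem~\ref{thm:cap-inversion}, the ingredients are Proposition~\ref{prop:virasoro-defect-degeneration}, the vanishing of the ordinary cap defects from \cite{OP3}, and the full column rank of Lemma~\ref{lem:stationary-cap-rank}, applied by induction on the number of unit insertions. One should check that the $\chi_{\log}$ split is consistent: $(1+1/r)=1/r+1$.

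The main obstacle is the first step. One has to justify that Givental--Teleman reconstruction for a semisimple orbifold theory yields exactly the Jiang--Tseng operators written out above, including the normalization of the twisted pairing $1/r$, the coarse-$\bar\psi$ descendants, and the constant term. I would handle this by citing the known statement that the Virasoro conjecture holds for semisimple orbifold theories, in the form used by Jiang--Tseng together with Teleman. I would then check that the operators of Section~\ref{subsec:virasoro-operators} at $Q=\varnothing$ coincide with the Jiang--Tseng operators, as already remarked after their definition.
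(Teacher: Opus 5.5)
Your proposal is correct and follows the paper's proof. The absolute constraints for $\PP^1_{r,1}$ come from semisimplicity \cite{MT1} together with Givental--Teleman reconstruction, and Theorem~\ref{thm:cap-inversion}, applied to the degeneration $\PP^1_{r,1}\rightsquigarrow\OrbCap_r\cup_p\mathrm{Cap}$ with $\boldsymbol\eta=\varnothing$, transfers them to $\OrbCap_r$; the case $r=1$ is covered by \cite{OP3}. The extra checks you list (the $c_0$ normalization and the splitting $1+1/r=1/r+1$ of $\chi_{\log}$) are consistent, and they are already built into the operator definitions and Proposition~\ref{prop:virasoro-defect-degeneration}.
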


\begin{proof}
Degenerate the absolute orbifold line at an ordinary point:
\[
\PP^1_{r,1}
\rightsquigarrow
\OrbCap_r\cup_p\mathrm{Cap},
\]
with the orbifold point lying on \(\OrbCap_r\). Applying Theorem~\ref{thm:cap-inversion} with $\boldsymbol{\eta}=\varnothing$, and we see that the absolute Virasoro for $\PP^1_{r,1}$ implies the relative Virasoro for $\OrbCap_r$. This proves the theorem.
\end{proof}


{\large \part{Extension beyond the effective case}\label{part:gerbe-extension}}


Let $\mathcal{X}$ be a $1$-dimensional smooth proper Deligne-Mumford stack with projective coarse moduli space. Denote by $G$ the group of generic stabilizers of $\mathcal{X}$. By rigidification (see \cite{ACV} and \cite{AGV}), there exists a morphism 
\begin{equation*}
\mathcal{X}\to \mathcal{C}:=\mathcal{X}\thickslash G    
\end{equation*}
which is a $G$-gerbe over an orbifold curve $\mathcal{C}$ (with trivial generic stabilizer). By gerbe decomposition (\cite{HHPS}, \cite{TT1}, \cite{TT3}), the Gromov-Witten theory of $\mathcal{X}$ is equivalent to the Gromov-Witten theory of a disconnected space $\widehat{\mathcal{X}}$ twisted by a $\mathbb{C}^*$-gerbe $\mathbf{c}$. By working with Hodge grading operator and multiplication of $c_1$, it is straightforward to define Virasoro constraints for $\mathbf{c}$-twisted Gromov-Witten theory of $\widehat{\mathcal{X}}$ and show that they are compatible with Virasoro constraints for $\mathcal{X}$. 

It is tempting to study Virasoro constraints for $\mathcal{X}$ via Virasoro constraints for $(\widehat{\mathcal{X}},\mathbf{c})$. Here are some remarks about this:
\begin{enumerate}
\item The equivalence between Gromov-Witten theories of $\mathcal{X}$ and $(\widehat{\mathcal{X}},\mathbf{c})$ is not proven in full generality.

\item Suppose the $G$-gerbe $\mathcal{X}\to \mathcal{C}$ has trivial band. In this case the equivalence of Gromov-Witten theories is proven \cite{AJT}, \cite{AJT2}, \cite{TT2}. Also, in this case $\widehat{\mathcal{X}}=\coprod_{\rho\in \hat{G}}\mathcal{C}_{\rho}$ is the disjoint of $\mathcal{C}_\rho=\mathcal{C}$ indexed by isomorphism classes $\rho$ of irreducible complex representations of $G$. The $\mathbb{C}^*$-gerbe $\mathbf{c}$ can be explicitly constructed: on the component $\mathcal{C}_\rho$, $\mathbf{c}$ is obtained from $[\mathcal{X}\to\mathcal{C}]\in H^2(\mathcal{C},G)$ via $H^2(\mathcal{C},G)\to H^2(\mathcal{C},\mathbb{C}^*)$ induced from the character of $\rho$.

\item In general, $\mathbf{c}$ need not be trivial. $\mathbf{c}$ is trivial exactly when the $G$-gerbe $\mathcal{X}\to\mathcal{C}$ is {\em essentially trivial} (in the sense of \cite{FMN}). 

\item Thus, the results of this paper implies Virasoro constraints for $\mathcal{X}$ when the $G$-gerbe $\mathcal{X}\to\mathcal{C}$ is essentially trivial. When the $G$-gerbe has trivial band, deducing Virasoro constraints for $\mathcal{X}$ requires an extension of results of the present paper to $\mathbf{c}$-twisted Gromov-Witten theory. In principle, this can be done, but requires extra efforts.

\end{enumerate}

\appendix

\section{Auxiliary commutators and left-vacuum evaluations}
\label{app:vacuum}

\begin{lem}\label{lem:U2Uk}
For \(k\ge2\),
\[
  [U_2,U_k]
  =
  \frac{(k+1)(2-k)}2U_{k+1}
  +\frac1r\left(
    (k+1)\sum_{j=2}^{k+1}\frac1j-k-\frac12
  \right)P_k.
\]
\end{lem}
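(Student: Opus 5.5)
The natural route is to compute $[U(x),U(y)]$ as a generating function and then extract the coefficient of $x^{3}y^{k+1}$. Recall that $U(z)=[t^1]\widehat A_0(z,t)=[t^1]\widehat{\mathcal A}(tz/r,z)$. Hence
\[
U(z)=\frac{z}{r}\,\partial_b\widehat{\mathcal A}(b,z)\big|_{b=0}.
\]
The commutator $[U(x),U(y)]$ is therefore a mixed second derivative in two independent parameters $b,c$ at $b=c=0$ of $[\widehat{\mathcal A}(b,x),\widehat{\mathcal A}(c,y)]$, weighted by $xy/r^2$. Lemma~\ref{lem:P-comm-two-param} only treats $b=0$ in the first slot, so I would first prove a general two-parameter commutator. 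The method is the one in that lemma: collect the coefficient of $\Ecal_{rm}(x+y)$, use \eqref{eq:E-commutator}, and resum the double sum over $(k,m-k)$ by a binomial identity.

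The expected shape is
\[
[\widehat{\mathcal A}(b,x),\widehat{\mathcal A}(c,y)]=\sum(\text{coefficient in } b,c,x,y)\,\widehat{\mathcal A}(b+c,x+y)+(\text{central term}),
\]
possibly with factors $(1+x/y)^{c}(1+y/x)^{b}$. The central term comes from $[\Ecal_{rm}(x),\Ecal_{-rm}(y)]$ and matches the $\delta_{p+q,-1}$ central piece in \eqref{eq:johnson-commutator-support}. If the closed form is too hard, I would use the fallback below instead.

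The fallback avoids the general identity. One differentiates the known commutator \eqref{eq:P-two-param-comm} once more in $b$ and uses the Jacobi identity with $P$. Concretely, $U$ is determined by $[P(x),U(y)]=xyP(x+y)$ up to terms commuting with all $P$. So one computes $[P(w),[U(x),U(y)]]$ via Jacobi and matches it with $[P(w),\,c_1U(\cdot)+c_2P(\cdot)]$. The ambiguity would then be only central or vacuum-type terms. This needs the second $t$-coefficient of $\widehat A_0$, and that is exactly the source of the $\log$ and harmonic terms.

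After obtaining the generating formula, schematically
\[
[U(x),U(y)]=xy\,\frac{y-x}{x+y}\,(\cdots)\,U(x+y)+\frac{xy}{r}\bigl(\log\text{ terms}\bigr)P(x+y),
\]
I would extract $[x^3y^{k+1}]$. The $U$-part should give $\tfrac{(k+1)(2-k)}{2}U_{k+1}$. This is the Witt-type coefficient, consistent with $U_k$ behaving like $L_{k-1}$, since $(k+1)(2-k)/2$ is a shifted $(1-(k-1))$-type factor. The $P$-part should give the harmonic combination via $[x^2](1+x)^{k+1}\log(1+x)$-type expansions, as in Corollary~\ref{cor:class-I-reactions}. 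I would check the result against $k=2$, where the commutator must vanish: both $(k+1)(2-k)=0$ and $3(\tfrac12+\tfrac13)-2-\tfrac12=0$ hold, which is a good consistency check. The main obstacle is the second-order expansion in the equivariant parameter, namely getting the $\log$-correction coefficients right with the Laurent expansion conventions stated at the start of the basic-commutators subsection.
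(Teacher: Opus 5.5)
Your proposal does not yet contain a proof. The fallback route also fails at a definite point.

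\textbf{The fallback.} All modes of $P$ commute with one another; this is \eqref{eq:johnson-commutator-support} at $t=0$. So the commutant of $\{P(w)\}$ contains every polynomial in the $P_m$, and is far from central.

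In particular $[P(w),c_2P(x+y)]=0$. Jacobi together with \eqref{eq:P-U-comm} gives $[P(w),[U(x),U(y)]]=wxy(x-y)P(w+x+y)$. Matching this against $[P(w),c_1U(x+y)+c_2P(x+y)]$ determines only $c_1=xy(x-y)/(x+y)$. The $P$-part is invisible to this test, and that part carries the harmonic-number coefficient, which is the real content of the lemma. Moreover, the Jacobi computation only uses $[P,U]$, so the second $t$-coefficient never enters.

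A smaller point: with the order $[U(x),U(y)]$ the $U$-part is $\frac{xy(x-y)}{x+y}U(x+y)$. Your $(y-x)$ would flip the sign of $\frac{(k+1)(2-k)}{2}$.

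\textbf{The main route.} This is only a guessed shape. The argument of Lemma~\ref{lem:P-comm-two-param} works because $P=\widehat{\mathcal A}(0,\cdot)$ has no negative modes. The sum over $k$ is therefore one-sided and a binomial series applies.

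For $b\neq0$ the coefficients $1/(1+b)_k$ with $k<0$ are nonzero, and the sum becomes bilateral. Naive termwise resummation does give your shape, $\vars(r(cx-by))(1+\frac yx)^b(1+\frac xy)^c\widehat{\mathcal A}(b+c,x+y)$. However, it comes with a factor $\Gamma(1+b)\Gamma(1+c)/\Gamma(1+b+c)=1-\frac{\pi^2}{6}bc+\cdots$ and with factors expanded in opposite regions. So it is not a formal identity you can quote; it would need a separate argument that you have not given.

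\textbf{The missing idea.} Independent $b,c$ are unnecessary. By \eqref{eq:johnson-commutator-support}, $[\widehat A_0(z,t),\widehat A_0(w,t)]$ is a scalar that is linear in $t$, so its $t^2$-coefficient vanishes. Write $\widehat A_0(z,t)=P(z)+tU(z)+t^2\widehat A_0^{(2)}(z)+O(t^3)$ and take that coefficient:
\[
  [U(z),U(w)]=[P(w),\widehat A_0^{(2)}(z)]-[P(z),\widehat A_0^{(2)}(w)].
\]
Next take the $t^2$-coefficient of \eqref{eq:P-two-param-comm} with $b=tw/r$. Use $\vars(tzw)=tzw+O(t^3)$ and $\partial_b\widehat{\mathcal A}(0,u)=\frac ru\,U(u)$. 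This gives
\[
  [P(z),\widehat A_0^{(2)}(w)]=\frac{zw^2}{z+w}U(z+w)+\frac{zw^2}{r}\log\Bigl(1+\frac zw\Bigr)P(z+w).
\]
Only one-sided sums occur. Extracting $[z^3w^{k+1}]$ and using $[x^\ell](1+x)^q\log(1+x)=\binom q\ell(h_q-h_{q-\ell})$, with $h_q$ the harmonic numbers, yields both coefficients in the lemma. Your $k=2$ consistency check is fine, but it cannot replace this step.
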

\begin{proof}
Set
\[
  \widehat A_0^{(2)}(w):=[t^2]\widehat A_0(w,t).
\]
Expanding \eqref{eq:P-two-param-comm} through order \(t^2\) gives
\begin{equation}\label{eq:P-A02-comm}
  [P(z),\widehat A_0^{(2)}(w)]
  =
  \frac{zw^2}{z+w}U(z+w)
  +\frac{zw^2}{r}\log\left(1+\frac zw\right)P(z+w).
\end{equation}
Since the untwisted commutator \eqref{eq:johnson-commutator-support} is linear in \(t\), extracting the
\(t^2\)-coefficient from
\[
  [\widehat A_0(z,t),\widehat A_0(w,t)],
  \qquad
  \widehat A_0(z,t)
  =
  P(z)+tU(z)+t^2\widehat A_0^{(2)}(z)+O(t^3),
\]
gives
\[
  [U(z),U(w)]
  =
  [P(w),\widehat A_0^{(2)}(z)]
  -
  [P(z),\widehat A_0^{(2)}(w)].
\]

The coefficient of \(z^3w^{k+1}\) in the \(U\)-part is
\[
  [z^2w^k](z-w)(z+w)^{k+1}U_{k+1}
  =
  \left(
    \binom{k+1}{1}-\binom{k+1}{2}
  \right)U_{k+1},
\]
which is \(\frac{(k+1)(2-k)}2U_{k+1}\).

For the \(P\)-part, its coefficient is
\begin{align*}
  &\frac1r\bigg(
    [x^k](1+x)^{k+1}\log(1+x)-[y^2](1+y)^{k+1}\log(1+y)\bigg)P_k\\
  = &\frac1r\left(
    (k+1)\sum_{j=2}^{k+1}\frac1j-k-\frac12
  \right)P_k.
\end{align*}
Here we have used
\[
  [x^\ell](1+x)^q\log(1+x)
  =
  \left.
    \frac{\partial}{\partial s}\binom{q+s}{\ell}
  \right|_{s=0}
  =
  \binom q\ell\bigl(h_q-h_{q-\ell}\bigr),
\]
with \(h_i\) given by \eqref{eq:harmonic}.
\end{proof}

\begin{lem}\label{lem:nonnegative-stationary-commute}
For \(m,n\geq0\), the operators
\[
  P_m,
  \qquad
  T_{n,a}\quad(1\leq a\leq r-1)
\]
commute pairwise.
\end{lem}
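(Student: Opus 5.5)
The plan is to reduce the statement to the Johnson coefficient commutator \eqref{eq:johnson-commutator-support} by extracting the $t^0$-part. Recall that $P(z)=[t^0]\widehat A_0(z,t)=\widehat A_0(z,0)$ and $T_a(z)=[t^0]\widehat A_a(z,t)=\widehat A_a(z,0)$. Hence $P_m=\widehat A_{0,m}(0)$ and $T_{n,a}=\widehat A_{a,n}(0)$, provided the coefficient extraction $[z^{p+1}]$ commutes with setting $t=0$. I will verify this from the explicit formulas \eqref{eq:A0hat} and \eqref{eq:Aahat}. For each fixed power of $z$, the coefficient in each $\Ecal$-mode is a polynomial or rational function in $t$ that is regular at $t=0$. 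The only potential pole is the prefactor $z/(tz+a)$ with $a\ge1$, and this is regular at $t=0$. Therefore \eqref{eq:johnson-commutator-support} may be specialized at $t=0$ coefficientwise.

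Specializing \eqref{eq:johnson-commutator-support} at $t=0$ gives the following for all $p,q\in\Z$:
\[
[\widehat A_{a,p}(0),\widehat A_{b,q}(0)]=(-1)^p\delta_{p+q,-1}c_{a,b},
\]
where $c_{0,0}=t|_{t=0}=0$, $c_{a,b}=r^{-1}$ if $1\le a,b\le r-1$ and $a+b=r$, and $c_{a,b}=0$ otherwise. We apply this to pairs among the $P_m$ and $T_{n,a}$ with $m,n\ge0$.

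\begin{itemize}
\item For two $P$'s, the commutator vanishes identically at $t=0$.
\item For a $P$ and a $T$, the commutator vanishes in all cases.
\item For two $T$'s, say $T_{p,a}$ and $T_{q,b}$ with $p,q\ge0$, a nonzero commutator would require $p+q=-1$. This is impossible when both indices are nonnegative.
\end{itemize}

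So all the listed operators commute pairwise.

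The only delicate step is justifying that \cite[Lemma 8]{J} holds as an identity of coefficients in $z$ that is polynomial (or at least regular) in $t$, so that evaluation at $t=0$ is legitimate. If this is in doubt, the fallback is a direct argument. Write $P_m$ and $T_{n,a}$ as finite combinations of $\Ecal_{rm'+a}$-coefficients, and use \eqref{eq:E-commutator}. The central term of $[\Ecal_j(z),\Ecal_k(w)]$ appears only when $j+k=0$. Nonnegativity of the coefficient indices then forces the scalar contributions to vanish, and the noncentral terms cancel by the same binomial identity as in Lemma~\ref{lem:P-comm-two-param} evaluated at $b=0$, where $\vars(rbx)=0$. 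In particular, the $b=0$ case of \eqref{eq:P-two-param-comm} and \eqref{eq:one-param-mixed} already gives $[P(x),P(y)]=0$ and $[P(y),T_a(x)]=0$ directly. Only the $T$--$T$ case needs the index argument.
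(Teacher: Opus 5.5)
Your proof is correct and is essentially the paper's argument: take the $t^0$-part of Johnson's commutator \eqref{eq:johnson-commutator-support} and observe that $p+q=-1$ is impossible for $p,q\ge0$. Your remark that the $a=b=0$ case already vanishes at $t=0$ is a harmless refinement. The fallback is not needed; note also that its claim that $T_{n,a}$ is a finite combination of $\Ecal$-modes is inaccurate, since the $m<0$ summands of \eqref{eq:Aahat} survive at $t=0$ with nonzero coefficients $1/(1+a/r)_m$.
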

\begin{proof}
By \eqref{eq:johnson-commutator-support}, a nonzero commutator would
require the two coefficient indices to sum to \(-1\), which is impossible
when both are nonnegative.
\end{proof}

\begin{proof}[Proof of Proposition~\ref{prop:vacuum-removal}]
For \(j\in\mathbb Z\) and \(q\geq0\), set
\[
  \Ecal_j^{[q]}:=[z^q]\Ecal_j(z).
\]
For \(n>0\), the Boson--Fermion correspondence gives
\[
  \Ecal_{-n}(z)\vac
  =
  \frac1{\vars(z)}[x^n]
  \exp\left(
    \sum_{m\geq1}\frac{\vars(mz)}m\alpha_{-m}x^m
  \right)\vac.
\]
See, for example, \cite[Chapter~14]{Kac90} for the Boson--Fermion correspondence and \cite[Appendix~A]{Oko01} for the infinite-wedge conventions used here.

Taking the coefficients of \(z^0,z^1,z^2\), respectively, yields
\begin{align}
  \Ecal_{-n}^{[0]}\vac
  &=
  \alpha_{-n}\vac,
  \label{eq:E0exp}\\
  \Ecal_{-n}^{[1]}\vac
  &=
  \frac12\sum_{i+j=n}\alpha_{-i}\alpha_{-j}\vac,
  \label{eq:E1exp}\\
  \Ecal_{-n}^{[2]}\vac
  &=
  \left[
    \frac16\sum_{i+j+\ell=n}
    \alpha_{-i}\alpha_{-j}\alpha_{-\ell}
    +
    \frac{n^2-1}{24}\alpha_{-n}
  \right]\vac,
  \label{eq:E2exp}
\end{align}
where every summation index is positive.

\smallskip
\noindent\emph{The case \(k=1\).}
After taking adjoints, the summands with \(m<0\) annihilate the vacuum.
Formula~\eqref{eq:A0hat} therefore gives
\begin{equation}\label{eq:Ustar-series}
  U^*(z)\vac
  =
  \frac zr\sum_{m\geq0}
  \frac{(z\Scal(rz))^m}{m!}
  \bigl(\log\Scal(rz)-h_m\bigr)
  \Ecal_{-rm}(z)\vac.
\end{equation}
Only \(m=0,1\) contribute to the coefficient of \(z^2\), so
\[
  U_1^*\vac
  =
  \left(-\frac1r\alpha_{-r}+\frac r{24}\right)\vac.
\]
Similarly,
\[
  P_0^*\vac
  =
  \left(H+\alpha_{-r}-\frac1{24}\right)\vac =
  \left(\alpha_{-r}-\frac1{24}\right)\vac.
\]
These two identities give \eqref{eq:U1-vacuum}.

\smallskip
\noindent\emph{The twisted-sector operator and its adjoint.}
For \(1\leq a\leq r-1\), put \(\theta_a=a/r\).  Since
\(T_a(z)=\widehat A_a(z,0)\), formula \eqref{eq:Aahat} gives
\begin{equation}\label{eq:Ta-adjoint-exact}
\begin{aligned}
  T_a(z)
  &=
  \frac za\Scal(rz)^{\theta_a}
  \sum_{m\in\mathbb Z}
  \frac{(z\Scal(rz))^m}{(1+\theta_a)_m}
  \Ecal_{rm+a}(z),\\
  T_a(z)^*
  &=
  \frac za\Scal(rz)^{\theta_a}
  \sum_{m\in\mathbb Z}
  \frac{(z\Scal(rz))^m}{(1+\theta_a)_m}
  \Ecal_{-rm-a}(z).
\end{aligned}
\end{equation}
Only the summands with \(m\geq0\) survive when \(T_a(z)^*\) acts directly
on the vacuum.  For a product, however,
\[
  (T_{p,a}T_{q,a^\vee})^*\vac
  =
  T_{q,a^\vee}^*T_{p,a}^*\vac,
\]
so the summands with \(m<0\) in the left factor may act nontrivially on
the state created by the right factor.  Thus the full Laurent series in
\eqref{eq:Ta-adjoint-exact} must be used.

For the rest of the proof, write
\[
  R_k
  =
  R_k^{\mathrm{lin}}
  +
  R_k^{\mathrm{unt}}
  +
  R_k^{\mathrm{tw}},
\]
where the three terms are, respectively, the linear, untwisted quadratic,
and twisted quadratic summands in \eqref{eq:Rk-def}.

\smallskip
\noindent\emph{The case \(k=2\).}
From
\begin{equation}\label{eq:P-formula}
  P^*(z)\vac
  =
  \sum_{m\ge0}
  \frac{(z\Scal(rz))^m}{m!}\Ecal_{-rm}(z)\vac
\end{equation}
and \eqref{eq:Ustar-series}, one obtains
\[
  P_1^*\vac
  =
  \left(\Ecal_{-r}^{[1]}+\frac12\alpha_{-2r}\right)\vac,
  \qquad
  U_2^*\vac
  =
  -\frac1r\left(
    \Ecal_{-r}^{[1]}+\frac34\alpha_{-2r}
  \right)\vac.
\]
Consequently,
\begin{equation}\label{eq:k2-gap}
  U_2^*\vac+\frac{3}{2r}P_1^*\vac
  =
  \frac1{2r}\Ecal_{-r}^{[1]}\vac.
\end{equation}
For \(r\geq2\), \eqref{eq:Ta-adjoint-exact} gives
\[
  (T_{0,a}T_{0,a^\vee})^*\vac
  =
  \frac1{a(r-a)}
  \alpha_{-a}\alpha_{-(r-a)}\vac.
\]
Indeed, the only additional potentially contributing term is the
\(m=-1\) term; its coefficient of \(z\) vanishes because
\[
  \Scal(rz)^{-a/r}\frac{\vars(az)}{\vars(z)}
\]
is even.  Therefore the twisted part of \(R_2\) satisfies
\[
  (R_2^{\mathrm{tw}})^*\vac
  =
  \frac1{4r}\sum_{a=1}^{r-1}
  \alpha_{-a}\alpha_{-(r-a)}\vac
  =
  \frac1{2r}\Ecal_{-r}^{[1]}\vac.
\]
Together with \eqref{eq:k2-gap}, this proves
\(\langle0|U_2=\langle0|R_2\).  When \(r=1\), the twisted sum and
\(\Ecal_{-1}^{[1]}\vac\) both vanish, so the same conclusion holds.

\smallskip
\noindent\emph{The case \(k=3\).}
Since
\[
  R_3^{\mathrm{lin}}
  =
  -\frac{11}{6r}P_2,
  \qquad
  R_3^{\mathrm{unt}}
  =
  \frac1{12r}P_0^2,
\]
it suffices to compare
\[
  D_3
  :=
  U_3-R_3^{\mathrm{lin}}-R_3^{\mathrm{unt}}
  =
  U_3+\frac{11}{6r}P_2-\frac1{12r}P_0^2
\]
with \(R_3^{\mathrm{tw}}\).  From \eqref{eq:Rk-def},
\[
  R_3^{\mathrm{tw}}\vac
  =
  \frac r{12}\sum_{a=1}^{r-1}
  \left[
    \theta\phi(1+\phi)T_{0,a}T_{1,a^\vee}
    +
    \theta\phi(1+\theta)T_{1,a}T_{0,a^\vee}
  \right]\vac,
\]
where \(\theta=a/r\) and \(\phi=1-\theta=a^\vee/r\).
Set
\[
  c_a:=\frac{a^2-ar-1}{24r}.
\]
The exact adjoint formula \eqref{eq:Ta-adjoint-exact} gives
\begin{align}
  (T_{0,a}T_{1,a^\vee})^*\vac
  ={}&
  \left[
    \frac{\alpha_{-a}\Ecal_{-a^\vee}^{[1]}}{aa^\vee}
    +
    \frac{\alpha_{-(r+a^\vee)}\alpha_{-a}}
         {aa^\vee(1+\phi)}
    +
    \frac{\alpha_{-r}}{a^\vee}
    +
    c_a
  \right]\vac,
  \label{eq:T0a-T1b}\\
  (T_{1,a}T_{0,a^\vee})^*\vac
  ={}&
  \left[
    \frac{\alpha_{-a^\vee}\Ecal_{-a}^{[1]}}{aa^\vee}
    +
    \frac{\alpha_{-a^\vee}\alpha_{-(r+a)}}
         {aa^\vee(1+\theta)}
    +
    \frac{\alpha_{-r}}a
    +
    c_a
  \right]\vac.
  \label{eq:T1a-T0b}
\end{align}
Only the summands indexed by \(m\in\{-2,-1,0,1\}\) contribute to
\eqref{eq:T0a-T1b}--\eqref{eq:T1a-T0b}.  Substituting these formulas into
\(R_3^{\mathrm{tw}}\), and using
\eqref{eq:E0exp}--\eqref{eq:E2exp}, \eqref{eq:Ustar-series}, and the
formula \eqref{eq:P-formula} for \(P^*(z)\vac\), gives
\begin{equation}\label{eq:k3-common-vector}
\begin{aligned}
  D_3^*\vac
  &=
  (R_3^{\mathrm{tw}})^*\vac\\
  &=
  \left[
    \frac5{6r}\Ecal_{-r}^{[2]}
    +\frac1{6r}\Ecal_{-2r}^{[1]}
    -\frac1{12r}\alpha_{-r}^2
    +\frac{11r^2-12r+1}{144r}\alpha_{-r}
    -\frac{(r^2-1)(r^2+6)}{2880r}
  \right]\vac.
\end{aligned}
\end{equation}
The scalar terms use
\[
  \sum_{a=1}^{r-1}a(r-a)
  =
  \frac{r(r^2-1)}6,
  \qquad
  \sum_{a=1}^{r-1}a^2(r-a)^2
  =
  \frac{r(r^2-1)(r^2+1)}{30}.
\]
Thus \(\langle0|U_3=\langle0|R_3\).  If \(r=1\), the twisted sum is
empty and the vector in \eqref{eq:k3-common-vector} vanishes by
\eqref{eq:E1exp}--\eqref{eq:E2exp}.

\smallskip
\noindent\emph{The induction step.}
Assume \(k\geq3\) and
\[
  \langle0|U_2=\langle0|R_2,
  \qquad
  \langle0|U_k=\langle0|R_k.
\]
For any operator \(\mathcal O\),
\begin{align*}
  \langle U_2U_k\mathcal O\rangle
  &=
  \langle R_kR_2\mathcal O\rangle
  +
  \langle[R_2,U_k]\mathcal O\rangle,\\
  \langle U_kU_2\mathcal O\rangle
  &=
  \langle R_2R_k\mathcal O\rangle
  +
  \langle[R_k,U_2]\mathcal O\rangle.
\end{align*}
Every factor in \(R_j\) is one of \(P_m\) or \(T_{n,a}\) with
\(m,n\geq0\).  Lemma~\ref{lem:nonnegative-stationary-commute} therefore
gives \([R_2,R_k]=0\), and hence
\begin{equation}\label{eq:induction-core}
  \langle[U_2,U_k]\mathcal O\rangle
  =
  \left\langle
    \bigl([R_2,U_k]-[R_k,U_2]\bigr)\mathcal O
  \right\rangle.
\end{equation}

We claim that
\begin{equation}\label{eq:R-recursion}
  [R_2,U_k]-[R_k,U_2]
  =
  \gamma_kR_{k+1}+\frac{\delta_k}{r}P_k,
\end{equation}
where
\[
  \gamma_k=\frac{(k+1)(2-k)}2,
  \qquad
  \delta_k=(k+1)\sum_{j=2}^{k+1}\frac1j-k-\frac12.
\]
For the linear terms, \eqref{eq:P-U-coeff} gives
\[
  [R_2^{\mathrm{lin}},U_k]-[R_k^{\mathrm{lin}},U_2]
  =
  \frac{k+1}{2r}(kh_k-3)P_k,
\]
which is the required coefficient because
\[
  -\gamma_kh_{k+1}+\delta_k
  =
  \frac{k+1}{2}(kh_k-3).
\]

For the untwisted quadratic terms, set
\[
  c_{k,p}
  =
  \frac{(p+1)!(k-p-2)!}{k!}
  \quad(0\leq p\leq k-3),
\]
and set \(c_{k,p}=0\) outside this range.  Commuting \(U_2\) with the
two factors gives
\[
  c_{k,p}\binom{k-p-1}{2}
  +
  c_{k,p-1}\binom{p+1}{2}
  =
  \frac{(k+1)(k-2)}2c_{k+1,p},
\]
which is the required coefficient since
\(\gamma_k=-(k+1)(k-2)/2\).

For the twisted quadratic terms, fix \(a\), write
\(\theta=a/r\), and put
\[
  d_{k,p}
  =
  \frac r{k!}(\theta)_{p+1}(1-\theta)_{k-p-1}.
\]
For \(1\leq p\leq k-2\),
\[
  \gamma_kd_{k+1,p}
  =
  -d_{k,p-1}\frac{(p-1+\theta)_2}{2}
  -
  d_{k,p}\frac{(k-p-1-\theta)_2}{2}.
\]
At \(p=0\), after pairing the summands indexed by \(a\) and \(a^\vee\),
the contribution from \([R_2^{\mathrm{tw}},U_k]\) gives
\[
  \gamma_kd_{k+1,0}
  =
  d_{2,0}\frac{(1-\theta)_k}{k!}
  -
  d_{k,0}\frac{(k-1-\theta)_2}{2}.
\]
The identity for \(p=k-1\) follows by exchanging \(\theta\) and
\(1-\theta\).  Substituting the definition of \(d_{k,p}\) verifies these
identities and proves \eqref{eq:R-recursion}.

Finally, Lemma~\ref{lem:U2Uk}, \eqref{eq:induction-core}, and
\eqref{eq:R-recursion} imply
\[
  \gamma_k\langle U_{k+1}\mathcal O\rangle
  +\frac{\delta_k}{r}\langle P_k\mathcal O\rangle
  =
  \gamma_k\langle R_{k+1}\mathcal O\rangle
  +\frac{\delta_k}{r}\langle P_k\mathcal O\rangle.
\]
Since \(\gamma_k\neq0\) for \(k\geq3\),
\[
  \langle0|U_{k+1}=\langle0|R_{k+1}.
\]
Together with the cases \(k=2,3\), this completes the induction and the
proof.
\end{proof}

\bibliography{universal-BIB}

@article {KW23,
    AUTHOR = {Urundolil Kumaran, A. and Wu, L.},
     TITLE = {A new approach to the operator formalism for {G}romov-{W}itten
              invariants of the cap and tube},
   JOURNAL = {Adv. Math.},
  FJOURNAL = {Advances in Mathematics},
    VOLUME = {435},
      YEAR = {2023},
     PAGES = {Paper No. 109357, 49}
}

@misc {AJT2,
   author = {{Andreini}, E. and {Jiang}, Y. and {Tseng}, H.-H.},
    title = "{Gromov--Witten theory of banded gerbes over schemes}",
 year = {2011},
 note = {Preprint, arXiv:1101.5996}
}

@article{AGV,
 author = {Abramovich, D. and Graber, T. and Vistoli, A.},
 title = {Gromov-{Witten} theory of {Deligne}-{Mumford} stacks},
 fjournal = {American Journal of Mathematics},
 journal = {Am. J. Math.},
 issn = {0002-9327},
 volume = {130},
 number = {5},
 pages = {1337--1398},
 year = {2008},
 doi = {10.1353/ajm.0.0017},
 zbMATH = {5363941},
 Zbl = {1193.14070}
}

@article{CR,
  title="{Orbifold Gromov-Witten theory}",
 author={{Chen}, W. and {Ruan}, Y.},
 journal={in: "Orbifolds in mathematics and physics (Madison, WI, 2001)", 25--85, 
Contemp. Math.},
  volume={310},
  year={2002},
  publisher={Amer. Math. Soc., Providence, RI,}
}

@article{FWY,
   author = {{Fan}, H. and {Wu}, L. and {You}, F.},
    title = "{Structures in genus-zero relative Gromov--Witten theory}",
   journal = {J. Topol.},
  fjournal = {Journal of Topology},
    volume = {13},
      year = {2020},
    number = {1},
    pages  = {269--307}
}

@article{FWY2,
  title="{Higher genus relative Gromov–Witten theory and double ramification cycles}",
 author = {{Fan}, H. and {Wu}, L. and {You}, F.},
  journal = {J. London Math. Soc.},
  volume = {103},
  year = 2021,
  pages = {1547-1576},
}

@article {IPsum,
author = {Ionel, E.-N. and Parker, T.-H.},
title = {The symplectic sum formula for {G}romov--{W}itten invariants},
journal = {Ann. of Math. (2)},
fjournal = {Annals of Mathematics. Second Series},
volume = {159},
year = {2004},
number = {3},
pages = {935--1025},
doi = {10.4007/annals.2004.159.935},
}

@article {LR,
author = {Li, A.-M. and Ruan, Y.},
title = {Symplectic surgery and {G}romov-{W}itten invariants of
              {C}alabi-{Y}au 3-folds},
journal = {Invent. Math.},
fjournal = {Inventiones Mathematicae},
volume = {145},
year = {2001},
number = {1},
pages = {151--218},
}

@misc {CLSZ,
author = {Chen, B. and Li, A.-M. and Sun, S. and Zhao, G.},
title = {Relative orbifold {G}romov--{W}itten theory and degeneration formula},
year = {2011},
note = {Preprint, arXiv:1110.6803},
}

@article{OP1,
 author = {Okounkov, A. and Pandharipande, R.},
 title = {Gromov-{Witten} theory, {Hurwitz} theory, and completed cycles},
 fjournal = {Annals of Mathematics. Second Series},
 journal = {Ann. Math. (2)},
 issn = {0003-486X},
 volume = {163},
 number = {2},
 pages = {517--560},
 year = {2006},
 doi = {10.4007/annals.2006.163.517},
 zbMATH = {5049020},
 Zbl = {1105.14076}
}

@article{OP3,
 author = {Okounkov, A. and Pandharipande, R.},
 title = {Virasoro constraints for target curves},
 fjournal = {Inventiones Mathematicae},
 journal = {Invent. Math.},
 issn = {0020-9910},
 volume = {163},
 number = {1},
 pages = {47--108},
 year = {2006},
 doi = {10.1007/s00222-005-0455-y},
 zbMATH = {5000066},
 Zbl = {1140.14047}
}

@article{Tel,
 author = {Teleman, {C}.},
 title = {The structure of {2D} semi-simple field theories},
 fjournal = {Inventiones Mathematicae},
 journal = {Invent. Math.},
 issn = {0020-9910},
 volume = {188},
 number = {3},
 pages = {525--588},
 year = {2012},
 doi = {10.1007/s00222-011-0352-5},
 zbMATH = {6050512},
 Zbl = {1248.53074}
}

@article{JT,
 author = {Jiang, Y. and Tseng, H.-H.},
 title = {On {Virasoro} constraints for orbifold {Gromov}-{Witten} theory},
 fjournal = {IMRN. International Mathematics Research Notices},
 journal = {Int. Math. Res. Not.},
 issn = {1073-7928},
 volume = {2010},
 number = {4},
 pages = {756--781},
 year = {2010},
 doi = {10.1093/imrn/rnp157},
 zbMATH = {5681406},
 Zbl = {1190.14055}
}

@article{TY,
 author = {Tseng, H.-H. and You, F.},
 title = {Higher genus relative and orbifold {Gromov}-{Witten} invariants},
 fjournal = {Geometry \& Topology},
 journal = {Geom. Topol.},
 issn = {1465-3060},
 volume = {24},
 number = {6},
 pages = {2749--2779},
 year = {2020},
 doi = {10.2140/gt.2020.24.2749},
 zbMATH = {7305779},
 Zbl = {1471.14117}
}

@aritle{V,
  author = {{Vakil},R.},
  title = {Counting curves on rational surfaces},
  journal = {Manuscripta Math.},
  Volume = {102},
  Number = {1},
  Pages = {53--84},
  Year = {2000}
}

@article{Z,
	Author = {A. Zinger},
	Journal = {J. Amer. Math. Soc.},
	Number = {3},
	Pages = {691--737},
	Title = {{The reduced genus-one Gromov--Witten invariants of Calabi--Yau hypersurfaces}},
	Volume = {22},
	Year = {2009}}

@article {W,
   author = {{Wu}, L.},
    title = "{A Remark on Gromov--Witten Invariants of Quintic Threefold}",
  journal = {ArXiv e-prints},
archivePrefix = "arXiv",
   eprint = {1705.06402},
 primaryClass = "math.AG",
     year = 2017,
    month = may
}

@article{B,
	Author = {{Bernardara}, M.},
	Journal = {Math. Nachr.},
	Number = {10},
	Pages = {1406--1413},
	Title = {{A semiorthogonal decomposition for Brauer Severi schemes}},
	Volume = {282},
	Year = {2009}}

@article{R,
	Author = {{Ruan}, Y.},
	Journal = {''Northern California Symplectic Geometry Seminar", Amer. Math. Soc. Transl. Ser. 2},
	Pages = {183--198},
	Title = {{Surgery, quantum cohomology and birational geometry}},
	Volume = {196},
	Year = {1999}}

@article{M,
	Author = {{Manolache}, C.},
	Journal = {J. Algebraic Geom.},
	Pages = {201--245},
	Title = {Virtual pull-backs},
	Volume = {21},
	Year = {2012}}

@article{CGT,
	Author = {{Coates}, T. and {Givental}, A. and {Tseng}, H.-H.},
 title = {Virasoro constraints for toric bundles},
 fjournal = {Forum of Mathematics, Pi},
 journal = {Forum Math. Pi},
 issn = {2050-5086},
 volume = {12},
 pages = {28},
 note = {Id/No e4},
 year = {2024},
 doi = {10.1017/fmp.2024.2},
 zbMATH = {7800979},
 Zbl = {1536.14049}
}

@article{Jun2,
	Author = {{Li}, J.},
	Journal = {J. Differential Geom.},
	Pages = {199--293},
	Title = {{A Degeneration formula of GW-invariants}},
	Volume = {60},
	Year = {2002}}

@article{A,
	Author = {M. Atiyah},
	Note = {{\tt arXiv:math/0012213}},
	Title = {K-Theory Past and Present},
	Year = {2000}}

@article{Giv5,
 author = {Givental, A.},
 title = {Gromov-{Witten} invariants and quantization of quadratic {Hamiltonians}},
 fjournal = {Moscow Mathematical Journal},
 journal = {Mosc. Math. J.},
 issn = {1609-3321},
 volume = {1},
 number = {4},
 pages = {551--568},
 year = {2001},
 url = {www.ams.org/distribution/mmj/vol1-4-2001/gwi.pdf},
 zbMATH = {1751354},
 Zbl = {1008.53072}
}

@article{Giv6,
	Author = {A. Givental},
	Journal = {Internat. Math. Res. Notices},
	Number = {23},
	Pages = {1265--1286},
	Title = {Semisimple {F}robenius structures at higher genus},
	Year = {2001}}

@incollection {P,
    AUTHOR = {Pandharipande, R.},
     TITLE = {A calculus for the moduli space of curves},
 BOOKTITLE = {Algebraic geometry: {S}alt {L}ake {C}ity 2015},
    SERIES = {Proc. Sympos. Pure Math.},
    VOLUME = {97},
     PAGES = {459--487},
 PUBLISHER = {Amer. Math. Soc., Providence, RI},
      YEAR = {2018}
}

@article {J,
    AUTHOR = {Johnson, P.},
     TITLE = {Equivariant {GW} theory of stacky curves},
   JOURNAL = {Comm. Math. Phys.},
  FJOURNAL = {Communications in Mathematical Physics},
    VOLUME = {327},
      YEAR = {2014},
    NUMBER = {2},
     PAGES = {333--386}
}

@article {MT1,
    AUTHOR = {Milanov, T. and Tseng, H.-H.},
     TITLE = {The spaces of {L}aurent polynomials, {G}romov-{W}itten theory
              of {$\mathbb{P}^1$}-orbifolds, and integrable hierarchies},
   JOURNAL = {J. Reine Angew. Math.},
  FJOURNAL = {Journal f\"{u}r die Reine und Angewandte Mathematik. [Crelle's
              Journal]},
    VOLUME = {622},
      YEAR = {2008},
     PAGES = {189--235}
}

@article{K,
 author = {Ke, H.-Z.},
 title = {On semisimplicity of quantum cohomology of {{\(\mathbb{P}^1\)}}-orbifolds},
 fjournal = {Journal of Geometry and Physics},
 journal = {J. Geom. Phys.},
 issn = {0393-0440},
 volume = {144},
 pages = {1--14},
 year = {2019},
 doi = {10.1016/j.geomphys.2019.05.007},
 zbMATH = {7107618},
 Zbl = {1441.14182}
}

@article{TT1,
 author = {Tang, X. and Tseng, H.-H.},
 title = {Duality theorems for {\'e}tale gerbes on orbifolds},
 fjournal = {Advances in Mathematics},
 journal = {Adv. Math.},
 issn = {0001-8708},
 volume = {250},
 pages = {496--569},
 year = {2014},
 doi = {10.1016/j.aim.2013.10.002},
 zbMATH = {6284416},
 Zbl = {1300.14058}
}

@article{AJT,
 author = {Andreini, E. and Jiang, Y. and Tseng, H.-H.},
 title = {Gromov-{Witten} theory of root gerbes. {I}: {Structure} of genus 0 moduli spaces},
 fjournal = {Journal of Differential Geometry},
 journal = {J. Differ. Geom.},
 issn = {0022-040X},
 volume = {99},
 number = {1},
 pages = {1--45},
 year = {2015},
 doi = {10.4310/jdg/1418345536},
 zbMATH = {6399558},
 Zbl = {1326.14130}
}

@article{HHPS,
 author = {Hellerman, S. and Henriques, A. and Pantev, T. and Sharpe, E. and Ando, M.},
 title = {Cluster decomposition, {T}-duality, and gerby {CFT}s},
 fjournal = {Advances in Theoretical and Mathematical Physics},
 journal = {Adv. Theor. Math. Phys.},
 issn = {1095-0761},
 volume = {11},
 number = {5},
 pages = {751--818},
 year = {2007},
 doi = {10.4310/ATMP.2007.v11.n5.a2},
 zbMATH = {5248961},
 Zbl = {1156.81039}
}

@article{TT2,
 author = {Tang, X. and Tseng, H.-H.},
 title = {A quantum {Leray}-{Hirsch} theorem for banded gerbes},
 fjournal = {Journal of Differential Geometry},
 journal = {J. Differ. Geom.},
 issn = {0022-040X},
 volume = {119},
 number = {3},
 pages = {459--511},
 year = {2021},
 doi = {10.4310/jdg/1635368578},
 url = {projecteuclid.org/journals/journal-of-differential-geometry/volume-119/issue-3/A-quantum-LerayHirsch-theorem-for-banded-gerbes/10.4310/jdg/1635368578.full},
 zbMATH = {7467769},
 Zbl = {1491.14079}
}

@article{TT3,
 author = {Tang, X. and Tseng, H.-H.},
 title = {On gerbe duality and relative {Gromov}-{Witten} theory},
 fjournal = {Advances in Theoretical and Mathematical Physics},
 journal = {Adv. Theor. Math. Phys.},
 issn = {1095-0761},
 volume = {25},
 number = {8},
 pages = {2171--2178},
 year = {2021},
 doi = {10.4310/ATMP.2021.v25.n8.a5},
 zbMATH = {7590053},
 Zbl = {1514.81221}
}

@article{FMN,
 author = {Fantechi, B. and Mann, E. and Nironi, F.},
 title = {Smooth toric {Deligne}-{Mumford} stacks},
 fjournal = {Journal f{\"u}r die Reine und Angewandte Mathematik},
 journal = {J. Reine Angew. Math.},
 issn = {0075-4102},
 volume = {648},
 pages = {201--244},
 year = {2010},
 doi = {10.1515/CRELLE.2010.084},
 zbMATH = {5865183},
 Zbl = {1211.14009}
}

@article{ACV,
 author = {Abramovich, D. and Corti, A. and Vistoli, A.},
 title = {Twisted bundles and admissible covers},
 fjournal = {Communications in Algebra},
 journal = {Commun. Algebra},
 issn = {0092-7872},
 volume = {31},
 number = {8},
 pages = {3547--3618},
 year = {2003},
 doi = {10.1081/AGB-120022434},
 zbMATH = {1961939},
 Zbl = {1077.14034}
}

@book{Kac90,
 author = {Kac, V. G.},
 title = {Infinite dimensional {Lie} algebras.},
 edition = {3rd ed.},
 isbn = {0-521-37215-1},
 year = {1990},
 publisher = {Cambridge etc.: Cambridge University Press},
 zbMATH = {194085},
 Zbl = {0716.17022}
}

@article {Oko01,
    AUTHOR = {Okounkov, A.},
 title = {Infinite wedge and random partitions},
 fjournal = {Selecta Mathematica. New Series},
 journal = {Sel. Math., New Ser.},
 issn = {1022-1824},
 volume = {7},
 number = {1},
 pages = {57--81},
 year = {2001},
 doi = {10.1007/PL00001398},
 zbMATH = {1684835},
 Zbl = {0986.05102}
}

@article{AF,
  author  = {Abramovich, D. and Fantechi, B.},
  title   = {Orbifold techniques in degeneration formulas},
  journal = {Ann. Sc. Norm. Super. Pisa Cl. Sci. (5)},
  volume  = {16},
  number  = {2},
  pages   = {519--579},
  year    = {2016},
  doi     = {10.2422/2036-2145.201408_006}
}

@misc {Tseng26,
 author = {H.-H. Tseng},
 title = {A note on {Virasoro} constraints for products},
 year = {2026},
 note = {Preprint, arXiv:2603.22486}
}
\bibliographystyle{amsxport}

\end{document}